\newcommand{\LL}{\mathcal{L}}
\renewcommand{\H}{\mathrm{H}}
\newcommand{\ZZ}{\mathbb{Z}}
\newcommand{\RR}{\mathbb{R}}
\newcommand{\QQ}{\mathbb{Q}}
\newcommand{\FF}{\mathbb{F}}
\newcommand{\cH}{\mathcal{H}}
\newcommand{\cP}{\mathcal{P}}
\DeclareMathOperator{\D}{DA}
\DeclareMathOperator{\Sal}{\overline{\textrm{Sal}}}
\DeclareMathOperator{\im}{im}
\DeclareMathOperator{\FC}{FC}
\DeclareMathOperator{\FP}{FP}
\DeclareMathOperator{\F}{F}
\DeclareMathOperator{\lk}{lk}
\newtheorem{teo}{Theorem}[section]
\newtheorem{dfn}[teo]{Definition}
\newtheorem{prop}[teo]{Proposition}
\newtheorem{lem}[teo]{Lemma}
\newtheorem{corol}[teo]{Corollary}
\theoremstyle{definition}
\newtheorem{exam}[teo]{Example}
\theoremstyle{remark}
\newtheorem{obs}[teo]{Remark}
\title{On the Sigma-invariants of even Artin groups of $\FC$-type}
\keywords{Artin groups, posets, cohomological finiteness conditions, Sigma-invariants}
\author[R.~Blasco, J.I.~Cogolludo, C.~Mart{\'\i}nez]{Rub\'en Blasco-Garc{\'\i}a, Jos\'e Ignacio Cogolludo-Agust{\'\i}n, and
Conchita Mart{\'\i}nez-P\'erez$^*$}
\address{
\small{
$^*$Corresponding author.\\
Departamento de Matem\'aticas, IUMA, Facultad de Ciencias, Universidad de Zaragoza, \\
c/ Pedro Cerbuna 12, E-50009 Zaragoza SPAIN
}}
\email{rubenb@unizar.es,jicogo@unizar.es,conmar@unizar.es}
\begin{document}

\thanks{\noindent
The authors are partially supported by Departamento de Ciencia, Universidad y Sociedad del 
Conocimiento del Gobierno de Arag{\'o}n (grant code: E22\_20R: ``{\'A}lgebra y Geometr{\'i}a''),
the second author is partially supported by MCIN/AEI/10.13039/501100011033 
(grant code: PID2020-114750GB-C31) and the first and third authors are partially supported by 
the Spanish Government PGC2018-101179-B-100} 

\begin{abstract} 
In this paper we study Sigma-invariants of even Artin groups of $\FC$-type, extending some known results for 
right-angled Artin groups. In particular, we define a condition that we call the strong $n$-link 
condition for a graph $\Gamma$ and prove that it gives a sufficient condition for a character $\chi:A_\Gamma\to\ZZ$ 
to satisfy $[\chi]\in\Sigma^n(A_\Gamma,\ZZ)$. This implies that the kernel $A^\chi_\Gamma=\ker \chi$ 
is of type $\FP_n$. We prove the homotopical version of this result as well
and discuss partial results on the converse. We also provide a general formula for the free part of $H_n(A^\chi_\Gamma;\FF)$ as an $\FF[t^{\pm 1}]$-module 
with the natural action induced by $\chi$. This gives a characterization of when $H_n(A^\chi_\Gamma;\FF)$ is a 
finite dimensional vector space over~$\FF$.
\end{abstract}

\subjclass[2020]{Primary 20J06, 20F36; Secondary 57M07, 55P20}

\maketitle

\keywords

\section{Introduction}
The Sigma-invariants of a group $G$ are certain sets $\Sigma^n(G,\ZZ)$, $\Sigma^n(G)$ of 
equivalence classes of characters $\chi:G\to\RR$ that provide information about the cohomological 
--\,in the case of $\Sigma^n(G,\ZZ)$\,-- and homotopical --\,for $\Sigma^n(G)$\,--  finiteness conditions 
of subgroups lying over the commutator of $G$. The first version of these invariants was defined by Bieri 
and Strebel in~\cite{Bieri-Strebel-Valuations} and the theory was later developed by 
Bieri-Neumann-Strebel~\cite{Bieri-Neumann-Strebel-Geometric}, Bieri-Renz~\cite{Bieri-Renz-Valuations}, 
and Renz~\cite{Renz-Geometrische}.
Usually, it is extremely difficult to compute these invariants explicitly but there are some remarkable 
cases in which a full computation is available. 

One of those cases occurs when $G$ is a right-angled Artin group (RAAG for short). These groups are
defined from a given finite graph $\Gamma$ which will be assumed here to be simple, i.e., without loops 
or multiple edges between vertices. Associated to $\Gamma$ one can describe the RAAG, denoted by $A_\Gamma$, 
as the group generated by the vertices of $\Gamma$ with relators of the form $[v,w]=1$ for any edge $\{v,w\}$ 
of $\Gamma$. This is a remarkable family of groups that range between finitely generated free 
abelian groups (corresponding to complete graphs) and finitely generated free groups (associated with graphs 
with no edges). Many properties of RAAGs can be determined in terms of the combinatorial properties of 
the graph. This is precisely the case for their Sigma-invariants, which were computed by 
Meier-Meinert-VanWyk in~\cite{MMVW}. To describe their computation we will need to introduce some terminology.

We recall the concept of link in our context as follows. Fix a simple finite graph $\Gamma$ as before and 
denote by $V_\Gamma$ (resp. $E_\Gamma$) its set of vertices (resp. edges). 
If $\Gamma_1\subseteq\Gamma$ is a subgraph and $v\in V_\Gamma$, then the link 
$\lk_{\Gamma_1}(v)$ of $v$ in $\Gamma_1$ is defined as the full subgraph induced by 
$V_{\Gamma_1}(v):=\{w\in V_{\Gamma_1}\mid \{v,w\}\in E_\Gamma\}$.

We extend this definition for subsets $\Delta\subseteq\Gamma$ by setting 
$$\lk_{\Gamma_1}(\Delta)=\cap_{v\in\Delta}\lk_{\Gamma_1}(v).$$
By convention we allow $\Delta$ to be empty, then
$\lk_{\Gamma_1}(\Delta)=\Gamma_1.$

We also recall the concept of the \emph{flag complex} associated with $\Gamma$. This the simplicial complex, 
denoted as $\hat\Gamma$, resulting after attaching a ($k-1$)-simplex to each \emph{$k$-clique}, i.e., to each 
complete subgraph of $k$ vertices. We use the same notation for arbitrary graphs. 
Note that, if $\Delta\subseteq\Gamma$ is a clique and $\Gamma_1\subseteq \Gamma$ a subgraph, 
then $\hat{\lk}_{\Gamma_1}(\Delta)$ is the intersection with $\hat{\Gamma}_1$ of the ordinary simplicial 
link of the cell $\sigma$ associated to $\Delta$, i.e., the subcomplex of $\hat\Gamma_1$ consisting of those simplices 
$\tau$ such that $\tau\cup\sigma$ is also a simplex of $\hat\Gamma_1$. 

Now, let $\chi:A_\Gamma\to\RR$ be a character and $n\geq 0$ an integer. Consider the full subgraph $\LL^\chi_0$ 
induced by the vertices $v$ of $\Gamma$ with $\chi(v)\neq 0$. Following Meier-Meinert-VanWyk~\cite{MMVW}, we call 
$\LL^\chi_0$ the {\sl living} subgraph of $\Gamma$ and say that vertices not in $\LL^\chi_0$ are {\sl dead}. 
Dead vertices are also called {\sl resonant} in~\cite{Blasco-conmar-ji-Homology}. 
We will say that the character $\chi$ satisfies the \emph{$n$-link condition} if for any clique $\Delta\subseteq\Gamma\setminus\LL^\chi_0$, 
$$\hat{\lk}_{\LL^\chi_0}(\Delta)\text{ is }(n-1-|\Delta|)\textrm{-acyclic}.$$ 
Then Meier-Meinert-VanWyk proved (see Subsection~\ref{subsec:SigmaRAAGs}) that $\chi\in\Sigma^n(G,\ZZ)$ 
if and only if $\chi$ satisfies the $n$-link condition. In fact, they also proved  the homotopical version of 
this result that characterizes $\Sigma^n(G)$ in terms of a {\sl homotopical $n$-link condition} (with ``being 
$(n-1-|\Delta|)$-connected'' instead of ``being $(n-1-|\Delta|)$-acyclic'').

Here, we want to extend this result for another remarkable family: even Artin groups of $\FC$-type. 
Given a finite simple graph $\Gamma$ as above, one can consider an even labeling on the edges, that is, for 
any edge $e=\{u,v\}$, its label $\ell(e)$ is an even number. Any such \emph{even graph} $\Gamma$ defines an 
\emph{even Artin group} $A_\Gamma$ generated by the vertices of $\Gamma$ and whose relators have the form $(uv)^k=(vu)^k$, 
where $\ell(e)=2k$. These special Artin groups where first considered in detail in~\cite{Blasco} and~\cite{Blasco-PF}. 
Note that any subgraph $X\subset \Gamma$ of an even graph $\Gamma$ generates an even Artin group $A_X$. 
In addition, an even Artin group is said to have $\FC$-type if $A_X$ is of finite type for each clique $X\subset \Gamma$: 
this means that the \emph{standard parabolic Coxeter group}  $W_X$, i.e., the  quotient of $A_X$ by the normal subgroup 
generated by $\langle u^2; u\in V_X\rangle$ is finite.

For a character $\chi$ on an even Artin group we consider a generalization of the living subgraph as follows (see~\cite{M2001}).
Denote $m_v=\chi(v)$, $v\in V_\Gamma$ and $m_e=m_v+m_w$, $e=\{v,w\}\in E_\Gamma$. 
We say that an edge is {\sl dead} if  $e$ has  label $\ell(e)>2$ and $m_e=0$. 
We will consider the subgraph $\LL^\chi$ obtained from $\Gamma$ after removing all dead vertices and the interior of all dead
edges. Note that if all the edges have label precisely 2, i.e., for a RAAG, then $\LL^\chi_0=\LL^\chi$.

To state our first main result, we also need to introduce the {\sl clique poset}, that is the poset of subgroups 
of $A_\Gamma$ which are generated by cliques of $\Gamma$:
$$\cP=\{A_\Delta\mid\Delta\subseteq\Gamma\text{ clique}\}.$$
Note that the poset structure of $\cP$ is the poset structure of the poset of cliques of $\Gamma$. 
Also, we allow $\Delta$ to be empty, in that case $A_\emptyset=\{1\}$. 
So the geometric realization of the clique poset is the cone of 
the barycentric subdivision of the flag complex where the vertex of the cone corresponds to the empty clique.

A special role is played by the subset $\mathcal{B}^\chi\subset \cP$ of those subgroups $A_\Delta$ where 
$\Delta\subseteq\Gamma$ is a clique such that for each vertex $v$ in $\Delta$ either $v$ is dead or $v\in e$ for $e$ 
a dead edge in $\Delta$. Observe that $1=A_\emptyset\in\mathcal{B}^\chi$. We will see that this is equivalent to asking 
that the center of $A_\Delta$ lies in the kernel of $\chi$, that is, $\chi(Z(A_\Delta))=0$.

\begin{dfn} \label{nlink}
Let $\mathcal{B}^\chi\subset \cP$ be as above. 
Assume that for any $A_\Delta\in\mathcal{B}^\chi$ with $|\Delta|\leq n$ the link $\hat{\lk}_{\LL^\chi}(\Delta)$ 
is $(n-1-|\Delta|)$-acyclic. Then we say that $\chi$ satisfies \emph{the strong $n$-link condition}. 

We also define a \emph{homotopical strong $n$-link condition} in a similar way just changing $(n-1-|\Delta|)$-acyclic 
by $(n-1-|\Delta|)$-connected.
\end{dfn}

Note that the homotopical strong $n$-link condition implies the strong $n$-link condition.

\begin{teo}\label{teo:mainsigma} 
Let  $G=A_\Gamma$ be an even Artin group of $\FC$-type, and $0\neq\chi:G\to\RR$ a character such that the strong 
$n$-link condition holds for $\chi$. Then $[\chi]\in\Sigma^n(G,\ZZ)$.
\end{teo}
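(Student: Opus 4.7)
The plan is to follow the general strategy of Meier-Meinert-VanWyk for RAAGs, suitably adapted to the richer cell structure of an even Artin group of $\FC$-type. The $\FC$-type hypothesis guarantees that every parabolic subgroup $A_\Delta$ indexed by a clique $\Delta$ is a finite-type Artin group, hence admits a finite-dimensional Salvetti-type classifying space $X_\Delta$; gluing these along the clique poset $\cP$ produces a cocompact $K(A_\Gamma,1)$ complex $X$ on which the argument will be carried out.

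First I would lift $\chi$ to a cellular height function $h$ on the universal cover $\tilde X$ and invoke a Bieri-Renz style finiteness criterion to reformulate $[\chi]\in\Sigma^n(G,\ZZ)$ as a controlled $(n-1)$-acyclicity condition for the inclusions of superlevel sets of $h$. The standard reduction then yields that it suffices to prove sufficient acyclicity of the \emph{descending link} at each cell of $\tilde X$. I would analyze each such descending link according to the cell's defining clique $\Delta$: if a face $\Delta'\subseteq\Delta$ has $\chi(Z(A_{\Delta'}))\neq 0$, a central translation in $Z(A_{\Delta'})$ provides a ``collapsing direction'' along which the corresponding piece of the descending link deformation retracts to a point and contributes nothing. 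What remains are the pieces indexed by cliques $\Delta'$ with $A_{\Delta'}\in\mathcal{B}^\chi$, and on these the descending link should identify up to homotopy with $\hat{\lk}_{\LL^\chi}(\Delta')$, so that the strong $n$-link condition supplies exactly the $(n-1-|\Delta'|)$-acyclicity needed to close the Morse-theoretic argument.

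The principal obstacle -- and the motivation for introducing $\mathcal{B}^\chi$ as well as for defining $\LL^\chi$ by removing the interior of each dead edge rather than deleting the edge altogether -- is the collapsing step at cells where $\chi$ is non-trivial on the center. In the RAAG case every parabolic is free abelian and equals its own center, so the matter is vacuous. For even $\FC$-type Artin groups, a dihedral factor $A_e$ of label $\ell(e)=2k>2$ has cyclic center generated by $(uv)^k$ of $\chi$-value $km_e$, while higher-dimensional parabolics have centers combining such cyclic pieces with free abelian directions coming from label-$2$ edges. Making the collapsing deformations functorial across the clique poset -- so that collapses on faces are compatible with collapses on higher-dimensional cells -- and ensuring the resulting quotient complex matches the combinatorial link $\hat{\lk}_{\LL^\chi}(\Delta)$ is where the bulk of the technical effort will reside, and is where the concrete definitions of $\LL^\chi$ and $\mathcal{B}^\chi$ given above are tailored to make the reduction go through cleanly.
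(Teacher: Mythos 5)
Your plan is genuinely different from the paper's.  You propose a free-action Morse-theoretic argument: take the universal cover of (a Salvetti-type) classifying space for $A_\Gamma$, lift $\chi$ to a cellular height function, and reduce $[\chi]\in\Sigma^n(G,\ZZ)$ to $(n-1)$-acyclicity of descending links, which you then hope to identify (after collapsing along central directions) with the combinatorial links $\hat{\lk}_{\LL^\chi}(\Delta')$ for $A_{\Delta'}\in\mathcal{B}^\chi$.  The paper instead invokes the criterion of Meier--Meinert--VanWyk for groups acting with \emph{nontrivial stabilizers} (Theorem~\ref{teo:sigmacomplex}): it takes $X$ to be the coset complex $C(\mathcal{H}^\chi)$ of the sub-poset $\mathcal{H}^\chi=\{A_\Delta\in\cP\mid\chi(Z(A_\Delta))\neq 0\}$ of the clique poset, proves the full coset complex $C(\cP)$ (the modified Deligne complex) is contractible (Lemma~\ref{lem:complex}), then shows $(n-1)$-acyclicity of $C(\mathcal{H}^\chi)$ by a filtration argument (Lemma~\ref{lem:cosetposets}) whose building blocks are the sphere-wedge complexes $Z^S$ of Lemma~\ref{lem:spheres} and a poset map $\mathcal{L}^S\to\mathcal{J}^S$ identifying $|\mathcal{J}^S|$ with $\hat{\lk}_{\LL^\chi}(\Delta)$ up to homotopy; the stabilizers $A_\Delta$ of cells of $C(\mathcal{H}^\chi)$ have $\chi(Z(A_\Delta))\neq 0$, so Lemma~\ref{lem:center} supplies $[\chi|_{A_\Delta}]\in\Sigma^\infty(A_\Delta)$.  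Thus the paper never touches the universal cover of the Salvetti complex and never has to compute a descending link.

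That said, your proposal does not close the argument and the open step is precisely the hard one.  You leave unestablished (1) that ``central translation collapses'' can be performed functorially across faces in a way compatible with the Morse filtration, and (2) that after those collapses the residual piece of the descending link at a cell indexed by $\Delta'\in\mathcal{B}^\chi$ is homotopy equivalent to $\hat{\lk}_{\LL^\chi}(\Delta')$.  Neither is automatic.  In the RAAG case the Salvetti complex is a cube complex and descending links are simplicial subcomplexes of $\hat\Gamma$; here the dihedral factors contribute $4\tilde\ell$-gonal $2$-cells whose vertex links in the universal cover are not subcomplexes of $\hat\Gamma$ at all, so identifying the descending link with a subcomplex of $\hat{\lk}_{\LL^\chi}(\Delta')$ requires an explicit analysis of the polygonal cell structure that you have not given and that has no counterpart in the paper.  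This is exactly the difficulty the paper circumvents: by passing to the coset complex and the MMVW action criterion, the ``collapsing along centers'' step is replaced by the clean algebraic fact $\chi(Z(A_\Delta))\neq 0\Rightarrow[\chi|_{A_\Delta}]\in\Sigma^\infty(A_\Delta)$, and the link identification is replaced by a poset map with contractible fibers.  So the outline of your reduction is conceptually parallel to the paper's, but as written there is a genuine gap at the descending-link identification, and it is not at all clear that this gap is easier to fill than simply running the paper's coset-complex argument.
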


In the case $n=1$, it is known for several types of Artin groups (see Theorem~\ref{teo:knownArtin}) 
that $[\chi]\in\Sigma^n(G,\ZZ)$ if and only if $\LL^\chi$ is  {\sl connected} and {\sl dominant}. This is equivalent to saying that $\chi$ satisfies the strong $n$-link condition (see Subsection~\ref{subsec:SigmaRAAGs}).

We do not know whether the converse of Theorem~\ref{teo:mainsigma} is true in general, but in Section~\ref{sec:free} we prove a partial converse. 
To do that, we use some of the techniques of \cite{Blasco-conmar-ji-Homology} to perform computations on the homology 
groups $\H_n(A_\Gamma^\chi,\FF)$ where $\FF$ is a field, $\chi:A_\Gamma\to\ZZ$ is assumed to be discrete and $A_\Gamma^\chi=\ker\chi$. 
More precisely, we show that these homology groups are finite dimensional as $\FF$-vector spaces if and only if certain $p$-local version of the 
strong $n$-link condition holds. Recall that a consequence of the well-known properties of the Sigma-invariants (see Section~\ref{sec:Sigma}) is that 
if for a discrete character $\chi$ we have $[\chi]\in\Sigma^n(G,\ZZ)$, then $A_\Gamma^\chi$ is of type $\FP_n$ and therefore the homology groups 
with coefficients over any field must be finite dimensional. As a by-product, an explicit computation of independent interest is provided 
in Theorem~\ref{teo:free} for the free part of $\H_n(A_\Gamma^\chi,\FF)$ when seen, via $\chi$, as a module over the principal ideal 
domain~$\FF[t^{\pm 1}]$. 

Moreover, section~\ref{sec:homotopic} is devoted to stating and proving a partial homotopic analogue of
Theorem~\ref{teo:mainsigma} in Theorem~\ref{teo:mainsigmahomotopic}.

\section{Sigma-invariants}\label{sec:Sigma}

Let $G$ be a finitely generated group. In this section we will consider arbitrary non-trivial characters $\chi:G\to\RR$. 
We say that two characters $\chi_1,\chi_2$ are equivalent if one is a positive scalar multiple of the other, i.e., 
if $\chi_1=t\chi_2$ for some $t>0$. We denote by $[\chi]$ the equivalence class of the character $\chi$ and by $S(G)$ 
the set of equivalence classes of characters. 
Note that if $G/G'$ has finite torsion and free rank $r$ then $S(G)$ can be 
identified with the sphere $S^{r-1}$. 
The homological $\Sigma$-invariants of $G$ are certain subsets
$$\Sigma^\infty(G,\ZZ)\subseteq\dots\subseteq\Sigma^n(G,\ZZ)\subseteq\dots\subseteq\Sigma^2(G,\ZZ)
\subseteq\Sigma^1(G,\ZZ)\subseteq \Sigma^0(G,\ZZ)=S(G)$$
which are very useful to understand the cohomological finiteness properties of subgroups of $G$ containing the commutator $G'$. 

For a formal definition, consider $\chi:G\to\RR$ a character and $G_\chi$ the monoid 
$G_\chi=\{g\in G\mid\chi(g)\geq 0\}$. Then
$$\Sigma^n(G,\ZZ)=\{[\chi]\in S(G)\mid \ZZ G_\chi\text{ is of type }\FP_n\}.$$

There is also a homotopical version 
$$\Sigma^\infty(G)\subseteq\dots\subseteq\Sigma^n(G)\subseteq\dots\subseteq
\Sigma^2(G)\subseteq\Sigma^1(G)\subseteq \Sigma^0(G)=S(G).$$
We can sketch the definition as follows (see ~\cite{MMVW}).
Let $G$ be a group of type $\F_n$. We can choose a $CW$-model $X$ for the classifying space for $G$ 
with a single 0-cell and finite $n$-skeleton. 
Let $Y$ be the universal cover of $X$. Then we may identify $G$ with a subset of $Y$ and given a 
character $\chi:G\to\RR$, we can extend $\chi$ to a map $\chi:Y\to\RR$ that we denote in the same way. 
To do that, map the vertex labeled by, say, $g$ to $\chi(g)$ and extend linearly to the rest of~$Y$.

For $a\in\RR$ denote by $Y_{\chi}^{[a,+\infty)}$ the maximal subcomplex in $Y\cap\chi^{-1}([a,+\infty))$.
Assuming $a\leq 0$, the inclusion $Y_{\chi}^{[0,+\infty)}\subseteq Y_{\chi}^{[a,+\infty)}$ induces a map
$$\pi_i(Y_{\chi}^{[0,+\infty)})\to\pi_i(Y_{\chi}^{[a,+\infty)})$$
and we say that $[\chi]\in\Sigma^n(G)$ if there is some $a$ such that this map is trivial for all $i<n$.
The reader can find more details about $\Sigma^n(G,\ZZ)$, $\Sigma^n(G)$ in~\cite{MMVW}. We recall now only two 
well-known properties: both $\Sigma^n(G,\ZZ)$, $\Sigma^n(G)$ are open subsets of $S(G)$ that determine the cohomological 
and homotopical finiteness conditions of subgroups containing the commutator thanks to the following fundamental Theorem:

\begin{teo}\label{teo:charsigma} 
Let $G$ be a group of type $\FP_n$ and  $G'\leq N\leq G$. Then $N$ is also of type $\FP_n$ if and only if
$$\{[\chi]\in S(G)\mid \chi(N)=0\}\subseteq\Sigma^n(G,\ZZ).$$
Moreover, if $G$ is of type $\F_n$, then $N$ is of type $\F_n$ if and only if
$$\{[\chi]\in S(G)\mid\chi(N)=0\}\subseteq\Sigma^n(G).$$
\end{teo}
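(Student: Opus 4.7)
The plan is to follow the Bieri--Renz--Brown approach, reducing the statement for a normal subgroup $N$ with $G'\le N\le G$ to a statement about a single auxiliary height function. Since $G/N$ is a finitely generated abelian group, write $G/N\cong\ZZ^r\oplus T$ with $T$ finite and pick a splitting giving $\phi\colon G\to\ZZ^r$; the characters $\chi\colon G\to\RR$ vanishing on $N$ are precisely $\chi=\lambda\circ\phi$ for $\lambda\in(\RR^r)^*\setminus\{0\}$, and their equivalence classes sweep out a sphere $S^{r-1}\subset S(G)$. Choose a $K(G,1)$ complex $X$ with a single $0$-cell and finite $n$-skeleton (using the hypothesis $\FP_n$ or $\F_n$), let $Y$ be its universal cover, and extend $\phi$ affinely to $\phi\colon Y\to\RR^r$ as in the preliminaries.

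Step one is Brown's criterion applied to $N$ acting on $Y$: since $N$ acts freely on $Y$ and preserves $\phi$, each preimage $Y_B:=\phi^{-1}(B)$ of a compact $B\subset\RR^r$ is an $N$-cocompact subcomplex, and these exhaust $Y$ as $B$ grows. Thus $N$ is of type $\FP_n$ if and only if the direct system $\{\H_i(Y_B;\ZZ)\}$ is essentially trivial for $i<n$, i.e., every $B$ sits inside some larger $B'$ with $\H_i(Y_B)\to\H_i(Y_{B'})$ the zero map. The homotopical version of the Brown criterion gives the analogous characterization of $\F_n$ in terms of the $\pi_i(Y_B)$.

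Step two interprets this condition one direction at a time. It suffices to take the $B$'s to be intersections of finitely many half-spaces $\{\lambda\le c\}$. For a single $\chi=\lambda\circ\phi$, the half-space $\phi^{-1}\{\lambda\ge a\}$ is exactly the subcomplex $Y_\chi^{[a,+\infty)}$ used in the definition of $\Sigma^n(G,\ZZ)$, and $[\chi]\in\Sigma^n(G,\ZZ)$ is equivalent to: for every $a$ there is some $a'\le a$ with the inclusion induced map $\H_i(Y_\chi^{[a,+\infty)})\to\H_i(Y_\chi^{[a',+\infty)})$ trivial for all $i<n$. So the hypothesis that every $[\chi]$ vanishing on $N$ lies in $\Sigma^n(G,\ZZ)$ gives precisely this one-sided control in every direction $\lambda$ of the sphere.

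The main obstacle is the step where one assembles these directional controls into the two-sided, multidimensional control demanded by step one. Here the essential ingredients are the openness of $\Sigma^n(G,\ZZ)$ in $S(G)$ together with the compactness of $S^{r-1}$: cover the sphere by finitely many open cones $U_1,\dots,U_k$ on each of which a uniform threshold $a_j$ works, and combine the resulting half-space controls, for instance by induction on the number of defining half-spaces using Mayer--Vietoris, to conclude that the inclusion $Y_B\hookrightarrow Y_{B'}$ is null on $\H_i$ ($i<n$) whenever $B'$ enlarges $B$ sufficiently in each of the finitely many chosen directions. The reverse implication uses that if $[\chi]\notin\Sigma^n(G,\ZZ)$ for some $\chi$ vanishing on $N$, then along $\chi$ a nontrivial homology class survives through the exhaustion, obstructing $\FP_n$ of $N$ via the Brown criterion. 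The $\F_n$-statement is formally parallel, replacing homology by homotopy groups, invoking the homotopical Brown criterion, and using the characterization of $\Sigma^n(G)$ in terms of $\pi_i$ rather than $\H_i$.
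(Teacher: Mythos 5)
The paper states this theorem as a foundational result of Bieri--Renz (and its predecessors) and cites it without giving a proof, so there is no in-paper argument to compare against; I will assess your sketch on its own. The framework you outline --- Brown's filtration criterion for $\FP_n$ applied to the $N$-cocompact exhaustion by the maximal subcomplexes $Y_B$ inside $\phi^{-1}(B)$, combined with openness of $\Sigma^n$ and compactness of $S^{r-1}$ --- is the standard geometric route to this theorem, so the high-level plan is sound.

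The central assembly step, however, is not established. Mayer--Vietoris does give the following: if $Y=A\cup B$ is contractible, $A\subseteq A'$, $B\subseteq B'$, $A'\cup B'=Y$, and the inclusions $A\hookrightarrow A'$, $B\hookrightarrow B'$ kill $\tilde\H_i$ for $i<n$, then so does $A\cap B\hookrightarrow A'\cap B'$, because $\tilde\H_i(A\cap B)\cong\tilde\H_i(A)\oplus\tilde\H_i(B)$ when the union is contractible. Applied to a half-space subcomplex and its opposite this controls a single \emph{slab}, which settles the case $r=1$. But for $r\ge2$ the exhausting polytopes are intersections of two or more slabs, and two slabs in independent directions do \emph{not} cover $Y$; the Mayer--Vietoris sequence then has a nonvanishing $\tilde\H_{i+1}$ of the union to contend with and your "induction on the number of defining half-spaces" does not close as stated. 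This multidirectional assembly is precisely the nontrivial core of the Bieri--Renz theorem (handled in the original via valuations on free resolutions, and in the Brown-style proofs via a more delicate treatment of the filtration) and cannot be dispatched by a one-line gluing. The reverse implication has a parallel gap: for $r\ge2$ the subcomplexes $Y_\chi^{[a,\infty)}$ are not $N$-cocompact (they are unbounded in the directions transverse to $\chi$), so a homology class "surviving along $\chi$" does not directly obstruct Brown's criterion for $N$; one must again control the transverse directions, or instead argue through the behaviour of $\Sigma^n$ under extensions $1\to N\to G\to Q\to1$ with $Q$ finitely generated abelian.
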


In particular, if $\chi:G\to\ZZ$ is discrete, we have that $\ker\chi$ is of type $\FP_n$ if and only if 
$[\chi], [-\chi]\in\Sigma^n(G,\ZZ)$.

If $R$ is a commutative ring, one can also define $R$-Sigma-invariants $\Sigma^n(G,R)$ by substituting the homology 
groups in the definition above by homology groups with coefficients in $R$. Theorem~\ref{teo:charsigma} remains 
true when $\FP_n$ is substituted by $\FP_n$ over $R$. Moreover we have
$$\Sigma^n(G)\subseteq\Sigma^n(G,\ZZ)\subseteq\Sigma^n(G,R)$$
for any $G$, $R$ and $n\geq 2$ and
$$\Sigma^1(G)=\Sigma^1(G,\ZZ)=\Sigma^1(G,R).$$

We will also need the following useful result.

\begin{lem}\label{lem:center}\cite[Lemma 2.1]{M2001} 
Let $G$ be any group of type $\F_n$  and $\chi:G\to\RR$ a character with $\chi(Z(G))\neq 0$ where $Z(G)$ is the center of $G$. 
Then $[\chi]\in\Sigma^n(G)\subseteq\Sigma^n(G,\ZZ)$.  
\end{lem}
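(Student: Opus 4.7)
The plan is to exploit the central element $z \in Z(G)$ with $\chi(z) \neq 0$ (replacing $z$ by $z^{-1}$ if needed, we may assume $c := \chi(z) > 0$) in order to show directly that the filtration $Y_\chi^{[a,+\infty)}$ is essentially $(n-1)$-connected with a uniform $a$, which by the definition in Section~\ref{sec:Sigma} yields $[\chi]\in\Sigma^n(G)$; the inclusion $\Sigma^n(G)\subseteq\Sigma^n(G,\ZZ)$ is already recalled above.

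Choose a $K(G,1)$ model $X$ with finite $n$-skeleton (available since $G$ is $F_n$) and a single $0$-cell, and let $Y=\tilde X$ with $\chi$ extended linearly. Left multiplication $L_z:Y\to Y$ is then a cellular homeomorphism with $\chi\circ L_z=\chi+c$, and it is $G$-equivariant precisely because $z$ is central. The first main step is to construct, by equivariant obstruction theory, a $G$-equivariant cellular homotopy $H:Y^{(n)}\times I\to Y$ from $\mathrm{id}_Y$ to $L_z$: on the $0$-skeleton define $H(g,t)=g\cdot\tilde\gamma(t)$ where $\tilde\gamma$ lifts a loop in $X$ representing $z$, and extend cell-by-cell over orbit representatives, using contractibility of $Y$ to kill the obstructions at each stage.

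The second main step, and what I expect to be the main technical obstacle, is uniformity. Since $Y^{(n)}/G=X^{(n)}$ has only finitely many cells, the function $(y,t)\mapsto \chi(H(y,t))-\chi(y)$ attains a minimum $-C$ on each orbit representative cell, and $G$-equivariance of $H$ together with $\chi(gy)=\chi(g)+\chi(y)$ propagates this to the estimate
\[
\chi(H(y,t))\geq \chi(y)-C\qquad\text{for all }(y,t)\in Y^{(n)}\times I,
\]
with $C$ independent of $y$. Iterating $H$ successively on $f,L_zf,\dots,L_z^{k-1}f$ yields a homotopy from $f$ to $L_z^kf$ that still stays within $\chi\geq \chi(y)-C$, because each successive segment starts at a higher $\chi$-value; this uniform control is the crux of the argument.

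With $H$ and the constant $C$ in hand, the conclusion is routine. Given $f:S^j\to Y_\chi^{[0,+\infty)}$ with $j<n$, the $(n-1)$-connectedness of $Y^{(n)}$ allows an extension $\bar f:D^{j+1}\to Y^{(n)}$, whose compact image is bounded below in $\chi$ by some $b$; choosing $k$ with $b+kc\geq 0$, the map $L_z^k\bar f$ is a null-homotopy of $L_z^kf$ inside $Y_\chi^{[0,+\infty)}$. Concatenating with the iterated equivariant homotopy from $f$ to $L_z^kf$ (which lies in $Y_\chi^{[-C,+\infty)}$ by the uniform bound) produces a null-homotopy of $f$ in $Y_\chi^{[-C,+\infty)}$. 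Since $-C$ does not depend on $f$ or on $j<n$, this witnesses $[\chi]\in\Sigma^n(G)$, completing the proof.
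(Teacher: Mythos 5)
Your proposal is correct and follows the standard geometric proof of this lemma; the paper itself only cites \cite[Lemma 2.1]{M2001} and does not reprove it. The core mechanism you identify is exactly right: a central $z$ with $c:=\chi(z)>0$ acts as a $G$-equivariant cellular homeomorphism $L_z$ of $Y$ that raises $\chi$ by the constant $c$; a $G$-equivariant cellular homotopy $H$ from the inclusion of $Y^{(n)}$ to $L_z$ can be built cell-by-cell (equivariance forces the definition on each orbit, contractibility of $Y$ kills the obstruction to extending over each orbit-representative cell, and there are finitely many of these since $G$ is $F_n$); equivariance plus compactness of the finitely many representative cells gives the uniform estimate $\chi(H(y,t))\geq\chi(y)-C$; and iterating $H$ pushes any sphere $f$ high enough in $\chi$ that a cellular null-homotopy in the $(n-1)$-connected space $Y^{(n)}$ can be translated by $L_z^k$ into $Y_\chi^{[0,+\infty)}$, all without the image ever dropping below $-C$. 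That uniform $C$ is indeed the crux, and you have it.

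One small point should be tightened before this would pass a careful referee. The estimate $\chi(H(y,t))\geq\chi(y)-C$ controls the $\chi$-values of the points in the image, but $Y_\chi^{[a,+\infty)}$ is by definition the maximal \emph{subcomplex} of $\chi^{-1}([a,+\infty))$, so a cell belongs to it only if the entire closed cell has $\chi\geq a$. Thus ``the iterated homotopy lies in $Y_\chi^{[-C,+\infty)}$'' is a slight overstatement: the image sits in $\chi^{-1}([-C,+\infty))$, which need not be a subcomplex. The fix is routine but should be said: take $H$ and $\bar f$ cellular, so that (since $j+1\leq n$) the entire null-homotopy has image in $Y^{(n)}$; because $X^{(n)}$ is finite, the oscillation of $\chi$ over the closure of any cell of $Y^{(n)}$ is bounded by a uniform constant $D$ (this uses $\chi(g\sigma)=\chi(g)+\chi(\sigma)$); then every closed cell meeting the image of the null-homotopy lies in $Y_\chi^{[-(C+D),+\infty)}$. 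Replacing $C$ by $C+D$ makes the claimed inclusion into a subcomplex literally true and the rest of the argument goes through unchanged.
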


Finally, we state here the following result which was proven by Meier-Meinert-VanWyk~\cite{MMVW}. 
This Theorem will be the main tool in the proof of Theorem~\ref{teo:mainsigma} as it was one of the main tools in their description of the Sigma-invariants for RAAGs.

\begin{teo}\cite[Theorem 3.2]{MMVW}\label{teo:sigmacomplex} 
Let $G$ be a group acting by cell-permuting homeomorphisms on a $CW$-complex $X$ with finite $n$-skeleton 
modulo $G$. Let $\chi:G\to\RR$ be a character such that  for any $0\leq p\leq n$ and any $p$-cell $\sigma$ 
of $X$ the stabilizer $G_\sigma$ is not inside $\ker\chi$. Then
\begin{enumerate}[label*=$\roman*)$]
\item
If $X$ is $(n-1)$-connected and $[\chi|_{G_\sigma}]\in\Sigma^{n-p}(G_\sigma)$ for any $p$-cell $\sigma$,
$0\leq p\leq n$, then $[\chi]\in\Sigma^n(G)$.
\item
If $X$ is $(n-1)$-$R$-acyclic and $[\chi|_{G_\sigma}]\in\Sigma^{n-p}(G_\sigma,R)$ for any $p$-cell $\sigma$, 
$0\leq p\leq n$, then $[\chi]\in\Sigma^n(G,R)$.
\end{enumerate}
\end{teo}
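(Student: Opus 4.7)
The plan is to apply Theorem~\ref{teo:sigmacomplex} (in its $\ZZ$-acyclic form) to a suitably modified Salvetti/Charney--Davis complex for $G$. The starting point is the classifying complex $\tilde X$ of the FC-type Artin group $G$, whose cells are parametrized by cosets $gA_\Delta$ with $\Delta$ a clique, dimension equal to $|\Delta|$, and stabilizers $gA_\Delta g^{-1}\cong A_\Delta$; this complex is contractible because $G$ is of FC-type. The first step is to split cliques according to whether $A_\Delta\in\mathcal{B}^\chi$ or not. If $A_\Delta\notin\mathcal{B}^\chi$ then, by the equivalent reformulation noted just before Definition~\ref{nlink}, $\chi(Z(A_\Delta))\neq 0$; since $A_\Delta$ is of finite type, Lemma~\ref{lem:center} yields $[\chi|_{A_\Delta}]\in\Sigma^\infty(A_\Delta)\subseteq\Sigma^{n-|\Delta|}(A_\Delta,\ZZ)$, and in particular $A_\Delta\not\subseteq\ker\chi$. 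So these "good" cliques already satisfy everything Theorem~\ref{teo:sigmacomplex} asks of them.

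The difficulty is with cliques $\Delta$ for which $A_\Delta\in\mathcal{B}^\chi$: for these $\chi|_{A_\Delta}$ can vanish on the center and they cannot be fed into the theorem directly. The remedy I would implement is to pass from $\tilde X$ to the $G$-subcomplex $X\subseteq\tilde X$ obtained by deleting all cells whose stabilizer is in $\mathcal{B}^\chi$, i.e., to the coset complex built from the sub-poset $\cP\setminus\mathcal{B}^\chi$. By construction every remaining stabilizer $A_\Delta$ satisfies $\chi(Z(A_\Delta))\neq 0$, so both hypotheses of Theorem~\ref{teo:sigmacomplex} on stabilizers are taken care of by Lemma~\ref{lem:center}; finiteness of the $n$-skeleton modulo $G$ is also automatic since $\Gamma$ has only finitely many cliques. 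What remains is to verify that $X$ is $(n-1)$-acyclic over $\ZZ$.

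This $(n-1)$-acyclicity is the heart of the argument and the main obstacle. I would establish it by comparing $X$ with the contractible $\tilde X$ via a Mayer--Vietoris / poset spectral sequence argument: the contribution of each removed stratum indexed by $\Delta\in\mathcal{B}^\chi$ with $|\Delta|\leq n$ is governed, after contracting the "central" direction that $\chi$ kills, by the homology of $\hat{\lk}_{\LL^\chi}(\Delta)$, whose $(n-1-|\Delta|)$-acyclicity is exactly the strong $n$-link hypothesis. Running an induction over the sub-poset $\mathcal{B}^\chi$ (from maximal to minimal elements, or equivalently a spectral sequence with $E^1$-terms indexed by cliques in $\mathcal{B}^\chi$ and $E^1_{p,q}$ involving the link homology) one assembles these local acyclicities into the global $(n-1)$-acyclicity of $X$. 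The technically delicate step is the correct identification of the "link at $\Delta$" inside $\tilde X$ with the flag complex $\hat{\lk}_{\LL^\chi}(\Delta)$ in the even Artin setting, where both resonant vertices and the interior of dead edges need to be removed to obtain $\LL^\chi$. Once this acyclicity is in hand, Theorem~\ref{teo:sigmacomplex}(ii) with $R=\ZZ$ delivers $[\chi]\in\Sigma^n(G,\ZZ)$.
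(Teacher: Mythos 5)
There is a fundamental mismatch here: the statement you were asked to prove is Theorem~\ref{teo:sigmacomplex} itself, i.e.\ the general result of Meier--Meinert--VanWyk about a group $G$ acting on an $(n-1)$-connected (resp.\ $(n-1)$-$R$-acyclic) $CW$-complex with cocompact $n$-skeleton and with cell stabilizers whose restricted characters lie in the appropriate Sigma invariants. Your proposal does not prove this statement; it opens with ``the plan is to apply Theorem~\ref{teo:sigmacomplex}'' and then sketches a derivation of Theorem~\ref{teo:mainsigma} for even Artin groups of $\FC$-type. As an argument for the assigned statement this is circular --- you cannot establish a theorem by invoking it as a black box --- and nothing in the proposal engages with what an actual proof would require: one must work with the extension of $\chi$ to the filtration $Y_\chi^{[a,+\infty)}$ (or an analogous filtration of the given $G$-complex $X$), use the hypothesis $[\chi|_{G_\sigma}]\in\Sigma^{n-p}(G_\sigma)$ to push cycles/spheres in the stabilizer directions, and assemble this over the finitely many $G$-orbits of cells, e.g.\ via a Brown-type criterion or the spectral sequence of the action. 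None of this appears. (For context, the paper itself does not reprove this theorem either; it cites it directly from \cite[Theorem 3.2]{MMVW}.)

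Had the target been Theorem~\ref{teo:mainsigma}, your outline would be broadly in line with the paper's actual strategy: the paper also replaces the full coset complex of the clique poset $\cP$ by the coset complex of $\cH^\chi=\cP\setminus\mathcal{B}^\chi$, handles the surviving stabilizers via Lemma~\ref{lem:center}, and proves $(n-1)$-acyclicity by a filtration argument (Lemma~\ref{lem:cosetposets}) whose local input is exactly the strong $n$-link condition. Even there, though, your sketch leaves the two genuinely hard points as assertions: the $(s-2)$-acyclicity of the ``downward'' piece $Z^S$ (Lemma~\ref{lem:spheres}, proved via a nerve computation exhibiting $Z^S$ as a join of infinite discrete sets) and the homotopy equivalence between the ``upward'' poset $\mathcal{J}^S$ and the flag complex of $\lk_{\LL^\chi}(\Delta)$, which is precisely where dead edges versus dead vertices must be disentangled. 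But the primary issue remains that the proposal proves the wrong statement.
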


\section{Artin groups and their Sigma-invariants}

As we have seen in the introduction, Artin groups can be defined in terms of a labeled graph.
Using the symmetry of the standard presentation of an Artin group we can show the following.

\begin{prop}\label{prop:symmetricsigma} 
Let $G=A_\Gamma$ be an Artin group. Then $-\Sigma^n(G)=\Sigma^n(G)$ and $-\Sigma^n(G,\ZZ)=\Sigma^n(G,\ZZ)$.
\end{prop}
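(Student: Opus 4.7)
The strategy is to exploit the involution of the standard Artin presentation that inverts every generator. The first step is to define $\sigma \colon A_\Gamma \to A_\Gamma$ on generators by $\sigma(v) = v^{-1}$ for each $v \in V_\Gamma$ and verify that this extends to an automorphism. For an edge $\{v,w\}$ with label $m$, the Artin relation equates the alternating word $vwv\cdots$ of length $m$ with $wvw\cdots$ of length $m$. Taking inverses of both sides and reading the resulting words from right to left shows that the image of this relation under $\sigma$ is again a consequence of the same relation. Hence $\sigma$ is a well-defined homomorphism; since $\sigma^2$ fixes every generator, $\sigma$ is in fact an involutive automorphism of $A_\Gamma$.

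Next, for any character $\chi \colon A_\Gamma \to \RR$, one computes $(\chi \circ \sigma)(v) = \chi(v^{-1}) = -\chi(v)$, so $\chi \circ \sigma = -\chi$ on all of $A_\Gamma$.

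Finally, I would invoke the standard equivariance of the Sigma invariants under group automorphisms: if $\phi \in \mathrm{Aut}(G)$, then $\phi$ induces a cellular self-homeomorphism of the universal cover of a $K(G,1)$ carrying the superlevel set filtration $Y_\chi^{[a,+\infty)}$ onto $Y_{\chi\circ\phi^{-1}}^{[a,+\infty)}$. In particular $[\chi]\in\Sigma^n(G,\ZZ)$ if and only if $[\chi\circ\phi^{-1}]\in\Sigma^n(G,\ZZ)$, and analogously for $\Sigma^n(G)$. Taking $\phi = \sigma$ (which equals its own inverse) yields $[\chi]\in\Sigma^n(G,\ZZ)\iff [-\chi]\in\Sigma^n(G,\ZZ)$, and likewise for the homotopical invariant, establishing the proposition.

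The only verification with any real content is the check that $\sigma$ respects the braid relations; the combinatorial bookkeeping is slightly different in the even and odd label cases, but in both the conclusion drops out immediately from inverting the original relation. Everything else is a formal consequence of the standard action of $\mathrm{Aut}(G)$ on $S(G)$ and its compatibility with the filtration used to define $\Sigma^n$.
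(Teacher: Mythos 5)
Your proposal is correct and follows essentially the same route as the paper: exploit the symmetry of the Artin presentation under inverting every generator to obtain an involutive automorphism $\sigma$ of $A_\Gamma$, and then use the induced self-homeomorphism of the universal cover of the Salvetti complex to swap the superlevel-set filtrations for $\chi$ and $-\chi$. Your verification that $\sigma$ respects the braid relations (via taking inverses of the defining relations, treating the even- and odd-length cases) is exactly the symmetry observation the paper records, and the equivariance argument you then invoke is the content of the paper's identity $-\chi\circ\varphi=\chi$ and $\varphi(Y_{-\chi}^{[a,+\infty)})=Y_\chi^{[a,+\infty)}$.
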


\begin{proof} 
Due to the symmetry of the relations in $G$, there is a well-defined map $\varphi:G\to G$ given as 
$\varphi(v):=v^{-1}$ for $v\in V_\Gamma$, which defines an automorphism of $G$. 
Since $\chi\circ\varphi=-\chi$ and both $\Sigma^n(G)$ and $\Sigma^n(G,\ZZ)$ are invariant under automorphisms of $G$, 
the result follows.
\end{proof}

So we have:

\begin{lem}\label{lem:discrete} 
Let $G=A_\Gamma$ be an Artin group and $\chi:G\to\ZZ$ a discrete character. 
Then  $\ker\chi$ is of type $\FP_n$ if and only if $[\chi]\in\Sigma^n(G,\ZZ)$. 
In particular, if there is some field $\FF$ and some $0<i\leq n$ such that $\dim_\FF\H^i(\ker\chi,\FF)$ is infinite, 
$\chi\not\in\Sigma^n(G,\ZZ)$.  
\end{lem}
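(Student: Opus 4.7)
The plan is to deduce both parts directly from the preceding results, namely Proposition~\ref{prop:symmetricsigma} and Theorem~\ref{teo:charsigma}, together with the standard fact that the class of $\FP_n$ groups is closed under extensions with $\FP_\infty$ quotient.

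For the biconditional, I would first identify the set of character classes $[\psi]$ with $\psi(A_\Gamma^\chi)=0$. Because $\chi:G\to\ZZ$ is discrete and nonzero, $G/\ker\chi$ is infinite cyclic, so every nonzero $\psi$ vanishing on $\ker\chi$ factors through this quotient and is therefore a real scalar multiple of $\chi$. Modulo the positive-scaling equivalence defining $S(G)$, these collapse to exactly the two classes $[\chi]$ and $[-\chi]$. Since $\chi$ factors through the abelianization, $G'\leq\ker\chi$, so Theorem~\ref{teo:charsigma} (applied with $N=A_\Gamma^\chi$) reduces the desired equivalence to the single statement ``$[\chi]\in\Sigma^n(G,\ZZ)\Leftrightarrow[-\chi]\in\Sigma^n(G,\ZZ)$'', which is exactly Proposition~\ref{prop:symmetricsigma}.

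For the ``in particular'' clause I would argue contrapositively. If $[\chi]\in\Sigma^n(G,\ZZ)$, then by the first part $A_\Gamma^\chi$ is of type $\FP_n$; and since $G$ fits into a short exact sequence $1\to A_\Gamma^\chi\to G\to\im\chi\to 1$ with $\im\chi\leq\ZZ$ of type $\FP_\infty$, the well-known closure of $\FP_n$ under such extensions yields that $G$ is itself of type $\FP_n$. This forces $\dim_\FF\H^i(G,\FF)<\infty$ for every field $\FF$ and every $i\leq n$, contradicting the hypothesis of infinite dimension at some $0<i<n$. Equivalently, one can invoke the Lyndon--Hochschild--Serre spectral sequence of the above extension, which collapses at $E_2$ because $\im\chi$ has cohomological dimension at most one, and read off the bound $\dim_\FF\H^i(G,\FF)\leq\dim_\FF\H^{i-1}(A_\Gamma^\chi,\FF)+\dim_\FF\H^i(A_\Gamma^\chi,\FF)$ from the two-column filtration.

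No serious obstacle is expected; the argument is an assembly of results already recorded in Section~\ref{sec:Sigma}. The one delicate point is the discreteness hypothesis on $\chi$: it is exactly what makes the set $\{[\psi]\in S(G):\psi(\ker\chi)=0\}$ finite (indeed equal to $\{[\chi],[-\chi]\}$), and thereby allows Proposition~\ref{prop:symmetricsigma} to collapse Theorem~\ref{teo:charsigma} to a single-character condition.
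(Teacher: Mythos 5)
Your treatment of the biconditional is exactly what the paper intends: identify $\{[\psi]\in S(G):\psi(\ker\chi)=0\}=\{[\chi],[-\chi]\}$ using the discreteness of $\chi$, apply Theorem~\ref{teo:charsigma} with $N=A_\Gamma^\chi$, and use Proposition~\ref{prop:symmetricsigma} to collapse the two-element condition to the single condition $[\chi]\in\Sigma^n(G,\ZZ)$.

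For the ``in particular'' clause, however, you took the printed statement at face value and argued about $\H^i(G,\FF)$ with $G=A_\Gamma$. That reading makes the clause vacuous: $\Sigma^n(G,\ZZ)$ is only defined when $G$ is already of type $\FP_n$ over $\ZZ$, and tensoring a finite-type projective resolution of $\ZZ$ with $\FF$ shows $G$ is then $\FP_n$ over $\FF$, so $\dim_\FF\H^i(G,\FF)<\infty$ for all $i\le n$ independently of $\chi$. Your detour through the extension $1\to A_\Gamma^\chi\to G\to\im\chi\to 1$ and a Lyndon--Hochschild--Serre argument therefore only re-derives a standing hypothesis. What the lemma actually means --- and what is invoked later in the proof of Corollary~\ref{corol:sigmachar} --- is the statement with $A_\Gamma^\chi$ in place of $G$: if $\dim_\FF\H^i(A_\Gamma^\chi,\FF)$ is infinite for some $0<i<n$, then $A_\Gamma^\chi$ is not $\FP_n$ over $\FF$, hence not $\FP_n$ over $\ZZ$, hence by the first part $[\chi]\notin\Sigma^n(G,\ZZ)$. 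The paper's terse proof (``if $G$ is of type $\FP_n$, \ldots then $G$ is also $\FP_n$ over $\FF$'') is shorthand for exactly this one-step argument, applied to the kernel rather than to $A_\Gamma$ itself. Nothing you wrote is false, but you should have noticed that the literal reading has no content and flagged the evident misprint; as it stands your proposal does not establish the version that the rest of the paper relies on.
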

\begin{proof} 
The first statement is a direct consequence of Theorem~\ref{teo:charsigma} and 
Proposition~\ref{prop:symmetricsigma}. For the second one, recall that if a group is of type $\FP_n$,
then after tensoring a finite type resolution of the trivial module by $\FF$, one obtains
a finite type resolution of projective modules over its group ring and thus it is also $\FP_n$ over~$\FF$. 
\end{proof}

\subsection{Sigma-invariants for RAAGs}\label{subsec:SigmaRAAGs}
The explicit computation of the Sigma-invariants for a particular group is usually very difficult. 
In~\cite{MVW1995}, Meier and VanWyk computed $\Sigma^1(A_\Gamma)$ for $A_\Gamma$ a RAAG:

\begin{teo}[Meier-VanWyk~\cite{MVW1995}]
\label{thm:MVW95}
Let $G=A_\Gamma$ be a RAAG and $\chi:G\to\RR$ a character. Then
$$[\chi]\in\Sigma^1(G)\text{ if and only if }\LL^\chi_0\text{ is connected and dominating in }\Gamma.$$
\end{teo}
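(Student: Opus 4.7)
The strategy is to use the Bieri--Neumann--Strebel Cayley-graph characterisation of $\Sigma^1$: $[\chi]\in\Sigma^1(G)$ if and only if the full subgraph of $\mathrm{Cay}(G,V_\Gamma^{\pm 1})$ induced on $G_\chi:=\chi^{-1}([0,\infty))$ is connected. For the sufficiency $(\Leftarrow)$, assume $\LL:=\LL^\chi_0$ is connected and dominating. The plan is to argue that any $g\in G_\chi$ is reachable from $1$ by a path in $G_\chi$, by rewriting an arbitrary generator-word for $g$ into one whose partial products remain in $G_\chi$. The key local ingredient is a \emph{detour} at a dead generator $v$: domination supplies a living neighbour $w$ of $v$; setting $\varepsilon=+1$ if $\chi(w)>0$ and $\varepsilon=-1$ otherwise, the commutation $vw=wv$ produces the path $1\to w^\varepsilon\to w^\varepsilon v^{\pm 1}\to v^{\pm 1}$ with $\chi$-values $0,|\chi(w)|,|\chi(w)|,0$, all non-negative. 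A symmetric detour rewrites any step by a negative-$\chi$ generator by first accumulating enough positive $\chi$ via the commutation structure of $\LL$ (this is where the connectedness of $\LL$ is used: any two living generators can be chained by commutations through living vertices). Stitching the local detours into a global rewrite yields the desired path. An equivalent structural packaging applies Theorem~\ref{teo:sigmacomplex} with $n=1$ to a $G$-CW complex whose vertex stabilisers are $H_v=\langle v\rangle\cong\ZZ$ for $v\in\LL$ and $H_v=\langle v,c(v)\rangle\cong\ZZ^2$ for each dead $v$ (with $c(v)\in\LL$ a chosen dominator); the condition $[\chi|_{H_v}]\in\Sigma^1(H_v)$ is immediate when $H_v\cong\ZZ$ and follows from Lemma~\ref{lem:center} when $H_v\cong\ZZ^2$, since $\chi(Z(H_v))\ne 0$ through $c(v)$.

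For the necessity $(\Rightarrow)$ I argue contrapositively. If $\LL$ fails to dominate, pick a dead vertex $v_0$ whose star in $\Gamma$ lies entirely in $\Gamma\setminus\LL$; then $A_\Gamma$ admits an HNN (or amalgamated-product) decomposition whose edge group is the centraliser of $v_0$, and the induced action on the Bass--Serre tree forces the Cayley subgraph on $G_\chi$ to split into infinitely many components. If $\LL$ decomposes as $\LL_1\sqcup\LL_2$, a parallel amalgamated decomposition of $A_\Gamma$ along a separating clique in $\Gamma\setminus\LL$ (which exists because any $\Gamma$-path from $\LL_1$ to $\LL_2$ must pass through dead vertices) yields the analogous obstruction; alternatively one may retract onto the sub-RAAG $A_{\LL_1\cup\LL_2}=A_{\LL_1}*A_{\LL_2}$ (free product, since no edge of $\Gamma$ joins $\LL_1$ and $\LL_2$) and invoke that $\Sigma^1$ of a non-trivial free product is empty on characters non-trivial on both factors, using that $\Sigma^1$ behaves well under retraction in the sense of Proposition~\ref{prop:symmetricsigma}-style symmetry and standard BNS functoriality.

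The main technical obstacle is the necessity direction: turning a combinatorial defect of $\LL$ into a genuine disconnection of the Cayley subgraph requires the Bass--Serre or HNN analysis to be carried out carefully, and one must verify that the retractions or splittings proposed are compatible with $\chi$ in the right sense. By comparison the sufficiency reduces either to the direct detour argument or to a routine application of Theorem~\ref{teo:sigmacomplex}, with Lemma~\ref{lem:center} supplying the crucial $\Sigma^1$ input for the dead vertices.
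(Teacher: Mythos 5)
The paper does not prove this theorem; it quotes it from Meier--VanWyk~\cite{MVW1995} (and the sufficiency, for $n=1$, is in any case subsumed in Theorems~\ref{teo:mainsigma} and~\ref{teo:mainsigmahomotopic}, which use the clique coset complex $C(\mathcal{H}^\chi)$ rather than the ad hoc complex you sketch). So there is no internal proof to compare against, and I evaluate the proposal on its own merits. One step fails outright: in the disconnected-$\LL^\chi_0$ case you claim a separating clique inside $\Gamma\setminus\LL^\chi_0$ exists because every $\Gamma$-path between components of $\LL^\chi_0$ crosses dead vertices. That only shows the dead vertices form a separating \emph{set}, not that a clique among them separates. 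Take $\Gamma$ the $4$-cycle $a{-}b{-}c{-}d{-}a$ with $\chi(a)=\chi(c)=1$ and $\chi(b)=\chi(d)=0$: then $\LL^\chi_0=\{a,c\}$ is disconnected, the only vertex cut is $\{b,d\}$, which is not a clique, and indeed $\Gamma$ has no separating clique whatsoever. Your fallback --- retracting onto $A_{\LL^\chi_0}$, which is a nontrivial free product of the $A_{\LL_i}$, so has empty $\Sigma^1$, and then using that retractions carry $\Sigma^1(G)$ into $\Sigma^1$ of the retract --- is correct and should simply replace the clique-separator argument.

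Two further steps are significantly under-specified. For the sufficiency via Theorem~\ref{teo:sigmacomplex} you never describe the $G$-CW complex, its $1$-cells, or why it is $0$-connected, which is precisely where the connectedness of $\LL^\chi_0$ must be used; moreover, a coset complex built from the poset $\{1\}\cup\{H_v\}$ would have $0$-cells $g\cdot 1$ with trivial stabilizer, hence contained in $\ker\chi$, violating the hypothesis of Theorem~\ref{teo:sigmacomplex}, so the construction needs to be something else (e.g., the coset complex of the cliques meeting $\LL^\chi_0$, as in the paper). For the domination-failure case the relevant splitting is $A_\Gamma=A_{\Gamma\setminus v_0}*_{A_{\lk_\Gamma(v_0)}}A_{\mathrm{star}(v_0)}$; the subgroup you call ``the centraliser of $v_0$'' is $A_{\mathrm{star}(v_0)}$, which is a \emph{vertex} group, not the edge group. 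The point to make is that $\chi$ vanishes on all of $A_{\mathrm{star}(v_0)}$, in particular on the edge group $A_{\lk_\Gamma(v_0)}$, and that both vertex groups properly contain it (this uses $\chi\neq 0$); then a Bieri--Strebel splitting theorem together with Proposition~\ref{prop:symmetricsigma} yields $[\chi]\notin\Sigma^1(G)$. As written, the Bass--Serre claim is asserted but not argued.
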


Recall that $\LL^\chi_0$ is the subgraph obtained from $\Gamma$ by removing the vertices $v$ with $\chi(v)=0$. 
As we are assuming that $A_\Gamma$ is a RAAG, $\LL^\chi_0=\LL^\chi$ is the living subgraph defined above. 
Also, we say that a subgraph $\Delta\subseteq\Gamma$ is {\sl dominating} if for any $v\in\Gamma\setminus\Delta$ 
there is some $w\in\Delta$ linked to $v$. In other words, the condition of $\LL^\chi_0$ being dominant is 
equivalent to saying that for every $v\in\Gamma\setminus\LL^\chi_0$,
$\lk_{\LL^\chi_0}(v)\neq\emptyset.$ And therefore the Theorem can be reformulated as follows:
$[\chi]\in\Sigma^1(G)$ if and only if 
\begin{itemize}
\item[i)] $\lk_{\LL^\chi_0}(\emptyset)$ is 0-connected and,

\item[ii)] for every $v\in\Gamma\setminus\LL^\chi_0$, $\lk_{\LL^\chi_0}(v)$ is (-1)-connected.
\end{itemize}

This can be restated using the 1-link condition defined in the introduction:
$$[\chi]\in\Sigma^1(G)\text{ if and only if }\chi\text{ satisfies the homotopical 1-link condition}.$$

Later on, in~\cite{MMVW} Meier-Meinert-VanWyk, extending Theorem~\ref{thm:MVW95}, 
were able to give a full description of the higher Sigma-invariants of a RAAG in terms of the $n$-link condition.

\begin{teo} 
Let $G=A_\Gamma$ be a RAAG and $\chi:G\to\RR$ a character. Then $[\chi]\in\Sigma^n(G,\ZZ)$ 
if and only if the $n$-link condition holds for $\chi$.  
\end{teo}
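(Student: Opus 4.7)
The plan is to mirror the scheme used in the case $n=1$ (Theorem~\ref{thm:MVW95}), leaning on the two central tools provided in Section~\ref{sec:Sigma}: the action criterion of Theorem~\ref{teo:sigmacomplex} in the sufficient direction, and a direct analysis of the sublevel-set homology of the Salvetti complex for the converse.

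For the sufficiency $(\Leftarrow)$, assume the $n$-link condition. I would take the universal cover $Y$ of the Salvetti complex $\Sal$ of $A_\Gamma$, whose open cells are indexed by pairs $(gA_\Delta,\Delta)$ with $g\in G$ and $\Delta$ a clique of $\Gamma$, and set $X\subseteq Y$ to be the $G$-invariant subcomplex obtained by removing every cell whose associated clique $\Delta$ lies entirely in $\Gamma\setminus\LL^\chi_0$. Then the $n$-skeleton of $X/G$ is finite and every cell stabilizer has the form $A_\Delta$ with $\Delta\cap\LL^\chi_0\neq\emptyset$. Since $A_\Delta$ is free abelian in the RAAG case, it equals its own center, and the presence of a living vertex $v\in\Delta$ forces $\chi(Z(A_\Delta))\neq 0$. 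Lemma~\ref{lem:center} then gives $[\chi|_{A_\Delta}]\in\Sigma^\infty(A_\Delta)\subseteq\Sigma^{n-|\Delta|}(A_\Delta,\ZZ)$, so the stabilizer hypothesis of Theorem~\ref{teo:sigmacomplex}(ii) holds. The crucial step is verifying that $X$ is $(n-1)$-acyclic. Since $Y$ is contractible, this reduces to analyzing the pair $(Y,X)$: cellular bookkeeping identifies the relative chains with contributions indexed by dead cliques $\Delta\subseteq\Gamma\setminus\LL^\chi_0$, which via a Mayer--Vietoris or spectral sequence argument are controlled by the reduced homology groups $\tilde{\H}_*(\hat{\lk}_{\LL^\chi_0}(\Delta))$. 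The $n$-link condition forces these to vanish through degree $n-1-|\Delta|$, producing the desired acyclicity, after which Theorem~\ref{teo:sigmacomplex}(ii) concludes $[\chi]\in\Sigma^n(G,\ZZ)$.

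For the necessity $(\Rightarrow)$, I would argue contrapositively. Suppose $\hat{\lk}_{\LL^\chi_0}(\Delta)$ fails to be $(n-1-|\Delta|)$-acyclic for some dead clique $\Delta\subseteq\Gamma\setminus\LL^\chi_0$, so that $\tilde{\H}_k(\hat{\lk}_{\LL^\chi_0}(\Delta))\neq 0$ for some $k\leq n-1-|\Delta|$. A non-vanishing cycle can be realized as a cellular cycle supported on the living part of the Salvetti complex, and its product with the fundamental class of the standard $|\Delta|$-torus carried by $A_\Delta$ -- which lifts into $Y_\chi^{[0,+\infty)}$ precisely because $\chi$ vanishes on $A_\Delta$ -- yields a $(k+|\Delta|)$-cycle in $Y_\chi^{[0,+\infty)}$. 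One then shows that its image in $\H_{k+|\Delta|}(Y_\chi^{[a,+\infty)})$ remains nonzero for every $a\leq 0$, contradicting the definition of $\Sigma^n(G,\ZZ)$ since $k+|\Delta|<n$.

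The hardest part I anticipate is precisely this persistence claim in the converse: producing the cycle is combinatorial, but ruling out boundaries that could kill it as $a$ decreases requires controlling the $\FF G$-chain structure of $Y_\chi^{[a,+\infty)}$ transverse to $\chi$. This is typically packaged as an algebraic Morse argument on a chain complex computing a Novikov-style homology, or dually via the vanishing/non-vanishing criteria of Bieri--Renz. The sufficiency direction is, by contrast, a largely formal deduction from the machinery assembled in Section~\ref{sec:Sigma}, once the cell-removal complex $X$ and its acyclicity are in hand.
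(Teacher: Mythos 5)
Two things to flag, one per direction of the equivalence. Note first that this theorem is cited in the paper from \cite{MMVW} rather than reproved; nevertheless the paper's own machinery gives a proof of exactly this RAAG case by specializing Theorem~\ref{teo:mainsigma} (sufficiency) and Corollary~\ref{corol:sigmachar} (necessity, since for RAAGs $\LL^\chi_p=\LL^\chi_0$ for every $p$ and a failure of the $\ZZ$-coefficient $n$-link condition forces a failure of the $p$-$n$-link condition for some $p$ by universal coefficients).

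For sufficiency, the complex you describe does not work as stated. You take $Y$ to be the universal cover of the Salvetti complex of $A_\Gamma$; but the Salvetti complex is a $K(A_\Gamma,1)$, so $A_\Gamma$ acts \emph{freely} on $Y$, every cell stabilizer is trivial, and the stabilizer hypothesis of Theorem~\ref{teo:sigmacomplex} --- that $G_\sigma\not\subseteq\ker\chi$ --- fails for every cell. Your further description (``cells indexed by pairs $(gA_\Delta,\Delta)$'', ``cell stabilizer has the form $A_\Delta$'') is actually that of the \emph{coset complex} of the clique poset $\cP=\{A_\Delta\mid\Delta\ \text{clique}\}$, i.e.\ the Charney--Davis modified Deligne complex $|C(\cP)|$ of Lemma~\ref{lem:complex}, not the Salvetti complex. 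With $X=|C(\mathcal{H}^\chi)|$ (remove chains whose minimal term is $A_\Delta$ with $\Delta\subseteq\Gamma\setminus\LL^\chi_0$), the stabilizer of each simplex is a conjugate of some $A_\Delta$ with $\Delta\cap\LL^\chi_0\neq\emptyset$, $A_\Delta$ is free abelian, $\chi(Z(A_\Delta))\neq 0$, and Lemma~\ref{lem:center} gives $[\chi|_{A_\Delta}]\in\Sigma^\infty(A_\Delta)$. The acyclicity of $X$ is then exactly what Lemma~\ref{lem:cosetposets} together with Lemma~\ref{lem:spheres} establishes (your Mayer--Vietoris/spectral sequence paragraph is the right intuition, and the paper packages it as a filtration of $|C(\cP)|$ by the height function). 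So the approach is recoverable, but as written it invokes Theorem~\ref{teo:sigmacomplex} on a complex to which that theorem cannot apply.

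For necessity, you propose to produce a non-bounding cycle in $Y_\chi^{[0,\infty)}$ and show it persists as $a\to-\infty$, and you yourself flag the persistence step as the hard part --- which it is, and which the proposal leaves open. The paper takes a genuinely different route that avoids this entirely: it computes, via the $\chi$-cyclic cover of the Salvetti complex and a normalization of the differential~\eqref{eq:differential} after localizing $\FF[t^{\pm1}]$ at $(t-1)$, the $\FF[t^{\pm1}]$-free rank of $\H_n(A_\Gamma^\chi,\FF)$ explicitly (Theorem~\ref{teo:free}). When the link condition fails this rank is positive, so $\H_n(A_\Gamma^\chi,\FF)$ is infinite-dimensional, $A_\Gamma^\chi$ is not $\FP_n$, and $[\chi]\notin\Sigma^n(G,\ZZ)$ by Theorem~\ref{teo:charsigma} together with Proposition~\ref{prop:symmetricsigma}; the non-discrete case follows from the closedness of $\Sigma^n(G,\ZZ)^c$. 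The advantage of this computation is that it eliminates the ``could the cycle become a boundary as $a$ decreases'' problem at the source, by working directly with the homology of the kernel subgroup rather than with filtered sublevel sets.
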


\subsection{Some partial results for Artin groups}\label{subsec:SigmaArtin}
Not much is known about $\Sigma$-invariants of general Artin groups. 
Let $\chi:A_\Gamma\to\RR$, $A_\Gamma$ an Artin group and $Z(S)$ the center of $S\subset A_\Gamma$. 
We highlight the following partial result.

\begin{teo}[{Meier-Meinert-VanWyk~\cite[Theorem B]{M2001}}]
Assume $A_\Gamma$ is of $\FC$-type. If $\hat\Gamma$ is $(n-1)$-acyclic and $\chi(Z(A_\Delta))\neq 0$ 
for any $\emptyset\neq\Delta\subseteq\Gamma$ clique, then $[\chi]\in\Sigma^n(G,\ZZ)$.
\end{teo}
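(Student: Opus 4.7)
The plan is to derive the theorem from Theorem~\ref{teo:sigmacomplex}(ii) with $R=\ZZ$, applied to a $G$-CW complex $X$ whose cell stabilizers (up to conjugacy) are the spherical parabolic subgroups $A_\Delta$ for non-empty cliques $\Delta\subseteq\Gamma$, and whose $(n-1)$-acyclicity is inherited from that of $\hat\Gamma$. The natural candidate is the geometric realization $X$ of the poset
$$\{gA_\Delta\mid g\in G,\ \emptyset\neq\Delta\subseteq\Gamma\ \text{clique}\}$$
ordered by inclusion, on which $G$ acts by left multiplication. The quotient $X/G$ is the order complex of non-empty cliques, i.e.\ the barycentric subdivision of $\hat\Gamma$, so $X$ has $G$-finite $n$-skeleton.

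A chain $gA_{\Delta_0}\subsetneq\cdots\subsetneq gA_{\Delta_p}$ has $G$-stabilizer $gA_{\Delta_0}g^{-1}$, with $\Delta_0$ a non-empty clique. Since $A_\Gamma$ is of $\FC$-type, $A_{\Delta_0}$ is a finite-type Artin group, hence of type $\F_\infty$ via its finite Salvetti complex, and its centre is non-trivial (infinite cyclic). The hypothesis $\chi(Z(A_{\Delta_0}))\neq 0$ ensures simultaneously that this stabilizer is not contained in $\ker\chi$ and, by Lemma~\ref{lem:center}, that $[\chi|_{A_{\Delta_0}}]\in\Sigma^m(A_{\Delta_0})\subseteq\Sigma^m(A_{\Delta_0},\ZZ)$ for every $m$. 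In particular, the condition $[\chi|_{G_\sigma}]\in\Sigma^{n-p}(G_\sigma,\ZZ)$ required in Theorem~\ref{teo:sigmacomplex}(ii) is satisfied for every $p$-cell $\sigma$.

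The main obstacle is the $(n-1)$-acyclicity of $X$. I would realize $X$ as the subcomplex of the modified Deligne complex $D_\Gamma$ obtained by removing the $0$-cells corresponding to the cosets $gA_\emptyset=\{g\}$ (those with trivial stabilizer). For $A_\Gamma$ of $\FC$-type, $D_\Gamma$ is CAT(0) by Charney--Davis, hence contractible. The link in $D_\Gamma$ of each removed $0$-cell is a copy of the barycentric subdivision of $\hat\Gamma$; moreover distinct removed $0$-cells are pairwise incomparable in the coset poset, so their open stars are pairwise disjoint. Combining excision with the long exact sequence of the pair $(D_\Gamma,X)$ and the contractibility of $D_\Gamma$ then gives $\tilde{\H}_i(X)\cong\bigoplus_{g\in G}\tilde{\H}_i(\hat\Gamma)$ in the relevant range, and the $(n-1)$-acyclicity of $\hat\Gamma$ transfers to $X$.

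With the acyclicity in hand, all hypotheses of Theorem~\ref{teo:sigmacomplex}(ii) are in place, and we conclude that $[\chi]\in\Sigma^n(G,\ZZ)$.
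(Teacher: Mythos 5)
Your proof is correct, and it is essentially the route the paper takes, just unpacked. The paper does not prove this statement directly; it observes that the hypothesis $\chi(Z(A_\Delta))\neq 0$ for every non-empty clique $\Delta$ is exactly the condition $\mathcal{B}^\chi=\{A_\emptyset\}$ (which also forces $\LL^\chi=\Gamma$), so the claim is the degenerate case of Theorem~\ref{teo:mainsigma}. Tracing through the proof of Theorem~\ref{teo:mainsigma} in that case recovers precisely your argument: the complex $X=|C\mathcal{H}^\chi|$ is the coset complex of non-empty cliques, the stabilizers have centres seen by $\chi$ so Lemma~\ref{lem:center} applies, and Theorem~\ref{teo:sigmacomplex}(ii) is invoked. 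Your excision/long-exact-sequence computation of $\tilde\H_*(X)$ from the contractible modified Deligne complex is exactly what the general filtration Lemma~\ref{lem:cosetposets} collapses to here, since the filtration has a single nontrivial step ($s=0$), $Z^{A_\emptyset}$ is empty, and $\mathcal{J}^{A_\emptyset}$ is the poset of non-empty cliques whose order complex is the barycentric subdivision of $\hat\Gamma$. One modest advantage of your direct version: it visibly requires only that $A_\Gamma$ be of $\FC$-type (matching the hypotheses of the cited Meier--Meinert--VanWyk theorem), whereas the paper's Theorem~\ref{teo:mainsigma} is stated only for even Artin groups of $\FC$-type, so strictly speaking the paper's "special case" remark covers only the even case. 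A minor inaccuracy worth flagging: the centre of a reducible finite-type Artin group $A_{\Delta_0}$ is free abelian of rank equal to the number of irreducible factors, not necessarily infinite cyclic; this does not affect the argument, since only non-triviality and $\chi(Z(A_{\Delta_0}))\neq 0$ are used.
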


We will see below that the hypothesis $\chi(Z(A_\Delta))\neq 0$ 
for any $\emptyset\neq\Delta\subseteq\Gamma$ clique means that $\mathcal{B}^\chi$ 
consists of the trivial subgroup only. Therefore this result is a particular case of our main Theorem~\ref{teo:mainsigma}.

A full characterization is available in few cases only. 

\begin{teo}[Meier-Meinert-VanWyk~\cite{M2001}, Almeida~\cite{Almeida18}, Almeida-Kochloukova~\cite{AK15,AK15b}, Kochloukova~\cite{Kochloukova20}]
\label{teo:knownArtin}
Assume that one of the following conditions holds:
\begin{itemize}
\item $\Gamma$ is a connected tree,

\item $\Gamma$ is connected and $\pi_1(\Gamma)$ is free of rank at most 2, 

\item $\Gamma$ is even and whenever there is a closed reduced path in $\Gamma$ with all labels bigger than 2, 
then the length of such path is always odd.
\end{itemize}
Then $[\chi]\in\Sigma^1(A_\Gamma)\iff\LL^\chi$ is connected and dominating.

\end{teo}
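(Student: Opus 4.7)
The plan is to treat the theorem as a biconditional and prove each direction separately, splitting the sufficient direction into the three graph-theoretic cases. Since all four quoted references yield \emph{the same} conclusion $\LL^\chi$ connected and dominating, the necessary direction can be handled uniformly for all Artin groups; only the sufficient direction requires the graph hypotheses.

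For the necessary direction, I would use Bass--Serre theory together with the Bieri--Strebel--Neumann criterion for $\Sigma^1$. If $\LL^\chi$ is disconnected, then the dead vertices separate $\Gamma$ into two full subgraphs $\Gamma_1,\Gamma_2$ whose intersection $\Gamma_0$ consists of dead vertices, and because each Artin relator is supported on a single edge, $A_\Gamma$ splits as an amalgamated product $A_{\Gamma_1}*_{A_{\Gamma_0}}A_{\Gamma_2}$ with $\chi(A_{\Gamma_0})=0$. This splitting obstructs $[\chi]\in\Sigma^1(G)$ by the standard characterization that $[\chi]\in\Sigma^1$ forces the monoid $G_\chi=\chi^{-1}([0,\infty))$ to be finitely generated over $\ker\chi$. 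If instead $\LL^\chi$ is not dominating, an isolated dead vertex $v$ yields a similar HNN decomposition along $\langle v\rangle\subseteq\ker\chi$, and the same criterion rules out $[\chi]\in\Sigma^1$.

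For the sufficient direction I would use Brown's criterion in the Salvetti complex: $[\chi]\in\Sigma^1(A_\Gamma)$ iff the horoball $Y_\chi^{[0,\infty)}$ in the universal cover $Y$ is connected. In each case I would exhibit explicit paths inside the horoball joining any vertex to the basepoint. For the \textbf{tree} case, induct by peeling leaves: a dead leaf $v$ has, by the dominating hypothesis, a living neighbour $w$, and the commutation/Artin relation at $\{v,w\}$ lets one trade any occurrence of $v$ for a conjugate by $w$ without leaving the horoball, reducing to the Artin group of a smaller tree. For the \textbf{$\pi_1(\Gamma)$ free of rank $\leq 2$} case, add at most two cycle relations to the tree argument, using that $\LL^\chi$ dominates each cycle to close up the relation inside the horoball; this is where Almeida--Kochloukova's and Kochloukova's Mayer--Vietoris estimates enter. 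For the \textbf{even case with long-label cycles of odd length}, the extra issue is that a dead edge $e=\{v,w\}$ with $\ell(e)=2k>2$ and $\chi(v)+\chi(w)=0$ behaves like a dead vertex in RAAG computations, so one must control how dead edges string together; the parity hypothesis is used to prevent a closed reduced path made entirely of dead edges from acting like a disconnecting cycle, because an odd-length cycle cannot be traversed with a $\chi$-balanced alternation of $v$ and $w$ type contributions, forcing every such cycle to meet a living vertex that re-anchors the horoball.

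The main obstacle is the third case. The parity condition is delicate: without it, an even-length cycle with all labels $>2$ could be pasted together from balanced Artin relators $(vw)^k(wv)^{-k}$ whose total $\chi$-contributions match but whose van Kampen fillings inside $Y_\chi^{[0,\infty)}$ do not close up, breaking connectivity of the horoball. The key technical step is therefore to show that under hypothesis (iii) every such cycle admits a filling inside the horoball using the Artin 2-cells of the Salvetti complex of $A_\Gamma$, which is essentially a combinatorial parity argument on the labels. Once this filling lemma is in place, the Brown--Bieri--Strebel criterion concludes $[\chi]\in\Sigma^1(A_\Gamma)$, completing the biconditional in all three cases.
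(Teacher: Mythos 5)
This statement is quoted from the literature (Meier--Meinert--VanWyk, Almeida, Almeida--Kochloukova, Kochloukova) and the paper gives no proof of it, so there is nothing internal to compare your argument against; it has to stand on its own, and as written it has two genuine gaps.

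First, your necessity direction only treats disconnection and non-domination caused by \emph{dead vertices}. In the even case $\LL^\chi$ also loses the interiors of dead \emph{edges}: an edge $e=\{v,w\}$ with $\ell(e)>2$ and $\chi(v)+\chi(w)=0$ can have both endpoints living, so $\LL^\chi$ may be disconnected while no collection of dead vertices separates $\Gamma$ and no amalgam $A_{\Gamma_1}*_{A_{\Gamma_0}}A_{\Gamma_2}$ with $\chi(A_{\Gamma_0})=0$ exists. The minimal example is already in the paper: $\Gamma$ a single edge labelled $4$ with $\chi(v)=1$, $\chi(w)=-1$; here $\LL^\chi$ is two isolated points and $[\chi]\notin\Sigma^1$ (Lemma~\ref{lem:dihedral}), but the edge subgroup $D_4$ is not contained in $\ker\chi$, so the Bieri--Strebel splitting criterion does not apply. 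You need the explicit computation of $\Sigma^1$ for dihedral Artin groups plus a mechanism (e.g.\ behaviour of $\Sigma^1$ under the retraction $A_\Gamma\to A_e$) to push non-membership from the parabolic up to $A_\Gamma$; your sketch supplies neither.

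Second, the sufficiency direction is asserted rather than proved in exactly the places where the cited theorems are hard. For the second bullet you invoke ``Mayer--Vietoris estimates'' by name without saying what is estimated, and both of the first two bullets allow odd labels, where the relator has the form $(xy)^\ell x=(yx)^\ell y$ and the step ``the Artin relation lets one trade $v$ for a conjugate by $w$ without leaving the horoball'' is precisely what must be verified (it is not a commutation relation). For the third bullet, the claim that every closed reduced path with all labels $>2$ of odd length admits a filling inside $Y_\chi^{[0,\infty)}$ \emph{is} Kochloukova's theorem; calling it ``essentially a combinatorial parity argument'' does not discharge it, and you give no indication of why odd length forces the cycle to meet a living vertex or edge. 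As it stands the proposal is a plausible roadmap, not a proof.
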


Moreover, the class of Artin groups that satisfy the hypothesis in Theorem~\ref{teo:knownArtin} 
is known to be closed under graph products and, as a consequence, every $\FC$-type Artin group also does
(\cite{A21}). Other concrete examples of Artin groups satisfying this hypothesis can be found in \cite{AK15b} 
and \cite{A17}.

Note that by a similar observation as above, this result can be stated as follows: For
$\Gamma$ connected and with $\pi_1(\Gamma)=1$ or free of 
rank at most 2, then 
$$[\chi]\in\Sigma^1(A_\Gamma)\iff\chi\text{ satisfies the strong 1-link condition}.$$

Observe also that here we are not assuming that $A_\Gamma$ is even.

\subsection{An easier particular case: direct products of Artin dihedral groups}\label{subsec:productdihedral}
 
It will be important below to understand the Sigma-invariants of the finite type Artin subgroups $A_\Delta$ of a given even Artin 
group of $\FC$-type $A_\Gamma$. In general, finite type Artin groups are direct products of finite type irreducible Artin groups and 
the only possible irreducible finite type Artin groups are those of dihedral type, which are the Artin groups associated to a 
single edge. In the even case the edge is labeled by an even integer, say $2\ell$ and the associated group is 
$$\D_{2\ell}=\langle x,y\mid (xy)^{\ell}=(yx)^{\ell}\rangle.$$

The (homotopical) Sigma-invariants for irreducible Artin groups of finite type have been described in \cite[Section 2]{M2001}.  
In the particular case of a dihedral Artin group we have the following result.

\begin{lem}\cite[Pg 76]{M2001}\label{lem:dihedral}  Let $G=\D_\ell$ be a dihedral Artin group and $n\geq 1$. For any commutative ring $R$
\begin{itemize}
\item[i)] If $\ell$ is odd, then 
$S(G)$ is a 0-sphere and $S(G)=\Sigma^n(G)=\Sigma^n(G,R)$ for any $n$.

\item[ii)] If $\ell=2\tilde{\ell}$ is even $S(G)$ is a 1-sphere. Denoting by $x$, $y$ the standard generators, we have
 $\Sigma^n(G)=\Sigma^n(G,R)=S(G)\setminus\{[\chi],[-\chi]\}$ where $\chi(x)=1$, $\chi(y)=-1$.
 \end{itemize}
\end{lem}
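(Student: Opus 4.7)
The plan is to handle every character with $\chi(Z(G))\neq 0$ by invoking Lemma~\ref{lem:center}, and then to treat separately the (at most two) characters that vanish on $Z(G)$. Since $D_\ell$ is a finite-type Artin group, its Salvetti complex is a finite $K(G,1)$, so $G$ is of type $\F_\infty$ and Lemma~\ref{lem:center} is applicable throughout.

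First I would compute the abelianization and the image $\chi(Z(G))$ in each parity. For $\ell$ odd, abelianizing the defining braid relation forces $x=y$ (the counts of $x$'s and $y$'s on its two sides differ by $1$), so $G^{\mathrm{ab}}=\ZZ$ and $S(G)$ is a $0$-sphere. For $\ell=2\tilde\ell$ even, both sides of the relation contain $\tilde\ell$ copies of each generator, so the relation abelianizes trivially, $G^{\mathrm{ab}}=\ZZ^{2}$, and $S(G)$ is a $1$-sphere. The centre of an irreducible finite-type Artin group is infinite cyclic, generated by the Garside element $\Delta$ when $\Delta$ is central (the even case, where $\Delta=(xy)^{\tilde\ell}$) and by $\Delta^{2}$ otherwise (the odd case). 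Evaluating yields $\chi(Z(G))=2\ell\,\chi(x)$ when $\ell$ is odd and $\chi(Z(G))=\tilde\ell(\chi(x)+\chi(y))$ when $\ell$ is even.

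In the odd case every nonzero character then satisfies $\chi(Z(G))\neq 0$ (recall $\chi(x)=\chi(y)$), so Lemma~\ref{lem:center} gives $S(G)\subseteq\Sigma^n(G)$, and the chain $\Sigma^n(G)\subseteq\Sigma^n(G,R)\subseteq S(G)$ closes case $(i)$. In the even case the characters vanishing on $Z(G)$ are exactly the nonzero multiples of $\chi_0$, whence Lemma~\ref{lem:center} yields $S(G)\setminus\{[\chi_0],[-\chi_0]\}\subseteq\Sigma^n(G)\subseteq\Sigma^n(G,R)$. By Proposition~\ref{prop:symmetricsigma} it then suffices to prove $[\chi_0]\notin\Sigma^n(G,R)$ for every $n\geq 1$ and every $R$, and since $\Sigma^n(G,R)\subseteq\Sigma^1(G,R)=\Sigma^1(G)$ this reduces to verifying that $\ker\chi_0$ is not finitely generated.

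I intend to establish this last point by a Reidemeister--Schreier rewriting with transversal $\{x^n:n\in\ZZ\}$. The only nontrivial Schreier generators turn out to be $a_n:=x^nyx^{1-n}$, and rewriting the $x^n$-conjugate of the defining relator $(xy)^{\tilde\ell}(yx)^{-\tilde\ell}$ produces exactly the relations $a_{n+1}^{\tilde\ell}=a_n^{\tilde\ell}$ for every $n\in\ZZ$. Hence $\ker\chi_0$ is an infinite iterated amalgamated product of copies of $\ZZ=\langle a_n\rangle$ glued along $\langle a_n^{\tilde\ell}\rangle=\langle a_{n+1}^{\tilde\ell}\rangle$; for $\tilde\ell\geq 2$ any finitely generated subgroup lies inside some $\langle a_{-N},\ldots,a_N\rangle$, which by a standard Bass--Serre normal-form argument is a proper subgroup (it does not contain $a_{N+1}$). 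The main obstacle is this final step---the careful Schreier rewriting and the ensuing non-finite-generation argument---since everything preceding it is a direct application of Lemma~\ref{lem:center} once $G^{\mathrm{ab}}$ and $Z(G)$ have been identified.
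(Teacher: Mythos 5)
Your proof is correct but supplies genuinely more than the paper does: the paper's own proof is essentially a citation, pointing to page~76 of~\cite{M2001} for the homotopical case and deducing the homological case from $\Sigma^n(G)\subseteq\Sigma^n(G,R)\subseteq\Sigma^1(G,R)=\Sigma^1(G)$. You reconstruct the argument that this citation hides. Disposing of every character with $\chi(Z(G))\neq 0$ via Lemma~\ref{lem:center}, after identifying $G^{\mathrm{ab}}$ and $Z(G)$ in each parity through the Garside element, is indeed the route taken in~\cite{M2001}; the real new content is the exceptional class $[\chi_0]$ in the even case, which you handle by reducing to $[\chi_0]\notin\Sigma^1(G)$ (using the containment chain plus Proposition~\ref{prop:symmetricsigma}) and then to the non-finite-generation of $\ker\chi_0$ via Reidemeister--Schreier. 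I checked the rewriting: the $x$-columns give trivial Schreier generators, the $y$-columns give $a_n=x^nyx^{1-n}$, and the $x^n$-rewrite of $(xy)^{\tilde{\ell}}(yx)^{-\tilde{\ell}}$ is indeed $a_{n+1}^{\tilde{\ell}}a_n^{-\tilde{\ell}}$, so $\ker\chi_0\cong\langle a_n\,(n\in\ZZ)\mid a_{n+1}^{\tilde{\ell}}=a_n^{\tilde{\ell}}\rangle$. A shortcut for your final step: $z:=a_n^{\tilde{\ell}}=(xy)^{\tilde{\ell}}$ is central, so $\ker\chi_0/\langle z\rangle$ is the free product $\ast_{n\in\ZZ}\,\ZZ/\tilde{\ell}\ZZ$, whose abelianization $\bigoplus_{\ZZ}\ZZ/\tilde{\ell}\ZZ$ is visibly not finitely generated for $\tilde{\ell}\geq 2$, avoiding the Bass--Serre normal-form argument for the infinite ascending union. (You correctly, if implicitly, assume $\tilde{\ell}\geq 2$; for $\tilde{\ell}=1$ one gets $D_2=\ZZ^2$ and the statement would be false, but this is excluded by the standing convention on dihedral Artin groups.) The trade-off is clear: the paper's proof is a two-line reduction to an external source, while yours is self-contained at the cost of the Schreier computation and the structural analysis of $\ker\chi_0$.
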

\begin{proof} For the homotopical result, see \cite[p.~76]{M2001}. For the homological one note that $\Sigma^1(G)=\Sigma^1(G,R)$
and $$\Sigma^n(G)\subseteq\Sigma^n(G,R)\subseteq\Sigma^1(G,R).$$
\end{proof}

If $A_\Gamma$ is an even Artin group of $\FC$-type and $A_\Delta$ is a finite type subgroup with $\Delta\subseteq\Gamma$, then $\Delta$ 
must be a clique and $A_\Delta$ is a direct product of even Artin dihedral groups and possibly a factor which is free abelian of finite rank. 
In this subsection we will give a full description of the Sigma-invariants of such an $A_\Delta$. But we will consider the slightly more 
general case of a product of {\sl arbitrary} Artin dihedral groups and possibly a free abelian groups of finite rank. Assume
$$G=G_1\times\dots \times G_s$$
where each of the $G_i$'s is either $\ZZ$ or Artin dihedral. 

Using \cite[Theorem 1.4]{BG10} and the fact that according to Lemma~\ref{lem:dihedral} the $R$-Sigma-invariants for Artin 
dihedral groups for $R=\ZZ$ and $R=\QQ$ coincide, we deduce (the upper script $c$ means the complementary of the 
corresponding subset)
$$\Sigma^n(G,\ZZ)^c=\bigcup_{n_1+\ldots+n_s=n,n_i\geq 0}\Sigma^{n_1}(G_1,\ZZ)^c\star\dots\star\Sigma^{n_s}(G_s,\ZZ)^c,$$
where $\star$ is the {\sl join} product in the corresponding spheres (see \cite{BG10}). 
We have $\Sigma^m(G_i,\ZZ)^c=\emptyset$ unless 
$m\geq 1$ and $G_i$ is dihedral of even type, in that case $\Sigma^m(G_i,\ZZ)^c=\{[\chi_i],-[\chi_i]\}$ where $\chi_i$ maps the standard 
generators of $G_i$ to $1$ and $-1$ resp.  As a consequence, if we order the factors so that $G_1,\ldots,G_t$ are precisely those which are 
dihedral of even type, and denote by $v_{t+1},\ldots v_s$ the vertices not involved in dihedral type edges, we have

$$
\Sigma^m(G,\ZZ)^c=\{[\chi]\in S(G)\mid\chi(Z(G)=0\}\text{ if } m\geq t.
$$
Note that $\chi(Z(G))=0$ is equivalent to  $m_e=0\text{ for } e\in E_\Gamma \text{ if } \ell(e)>2\text{ and }\chi(v_i)=0\text{ for }t+1\leq i\leq s.$ For $m<t$,
$\Sigma^m(G,\ZZ)^c$ consists of characters that vanish in all the vertices $v_i$ for $t+1\leq i\leq s$ and in at least $t-m$ of the factors $G_i$ for $1\leq i\leq t$, having zero $m_e$-value in the rest.

\subsection{Coset posets}

In this section we prove some results on coset posets that will be used in the main proofs later on. 

\begin{dfn} 
Let $G$ be a group and $\cP$ a poset (ordered by inclusion) of subgroups of $G$.
The {\sl coset complex} $C_G(\mathcal{P})$ (or simply $C(\mathcal{P})$ if the group $G$ is clear by the context) 
is the geometric realization of the poset $G\mathcal{P}$ of cosets $gS$ where $g\in G$ and $S\in\cP$. In other words, 
it is the geometric realization of the simplicial complex whose $k$-simplices are the chains
\begin{equation}\label{eq:simplices}
g_0S_0\subset g_1S_1\subset\dots\subset g_kS_k=g_0(S_0\subset S_1\subset\dots\subset S_k),
\end{equation}
where $g_0,\ldots,g_k\in G$ and $S_0,\ldots,S_k\in\mathcal{P}$. Let $\mathcal{P}_\chi$ be the subposet of 
$\mathcal{P}$ consisting of those subgroups $S\in\cP$ such that $\chi|_S\neq 0$. 
It yields a subcomplex $C(\mathcal{P})_\chi$  of $C(\mathcal{P})$. 
We identity $C(\mathcal{P})$ with its geometric realization. 
\end{dfn}

We will consider posets of subgroups $\cP$ having a {\sl height} function $h:\cP\to\ZZ^+\cup\{0\}$ such that whenever 
$S\subsetneq T$ both sit in $\cP$, we have $h(S)<h(T)$. We also assume that there is a bound for the height of the 
elements of $\cP$. We denote that bound by $h(\cP)$.
Now, assume we have a subposet $\cH\subseteq\cP$. Then $C(\cH)$ is a subcomplex of $C(\cP)$. We want to compare the 
homology of $C(\cP)$ with the homology of $C(\cH)$. Let $$\sigma:g(S_0\subset S_1\subset\dots\subset S_k)$$
be a $k$-simplex in $C(\cP)$. Using the height function $h$ we set
$$h_{\cH^c}(\sigma)=
\begin{cases} 
-1 & \text{ if }S_i\in\cH \text{ for every }0\leq i\leq k,\\
\mathrm{max}\{h(S_j)\mid S_j\not\in\cH\} & \text{ in other case}.\\
\end{cases}$$

Note that if $\tau\subseteq\sigma$ is a face, then $h_{\cH^c}(\tau)\leq h_{\cH^c}(\sigma)$.
This implies that we can use this function to define a subcomplex
$$D^s:=\{\sigma\in C(\cP)\mid h_{\cH^c}(\sigma)\leq s\}$$
for $-1\leq s\leq h(\cP)$. 
So we have a filtration
$$C(\mathcal{P}_\chi)=D^0\subseteq D^1\subseteq\dots\subseteq D^{h(\cP)}=C(\mathcal{P}).$$
For $s\geq 0$, simplices in $D^{s}$ but not in $D^{s-1}$ are of the form 
$$\sigma:g(S_0\subset S_1\subset\dots\subset S_k)$$
such that there is some $0\leq i\leq k$ with $S_i\in\cP\setminus\cH$ of height precisely $s$ and $S_j\in\cH$ for $j>i$. 
 
Fix an $S\in\cP\setminus\cH$ of height precisely $s$ and consider the set of all simplices of the form
\begin{equation}\label{simplex}
\sigma:g(S_0\subseteq S_1\subseteq\dots \subseteq S_i\subseteq S_{i+1}\subseteq\dots\subseteq S_k)
\end{equation}
with $S_i=S$ and $S_j\in\cH$ for each $j>i$. Those simplices lie in $D^s$. The boundary $\partial\sigma$ of such a $k$-simplex $\sigma$ consists of $k-1$-simplices $\tau$ which are of the same form except of the case when 
$$\tau:g(S_0\subseteq S_1\subseteq\dots\subseteq S_{i-1}\subseteq S_{i+1}\subseteq\dots\subseteq S_k)$$
and then $\tau\in D^{s-1}$. 
Consider now the complex  $Z^S$ which is the geometric realization of $C_S(\cP_S)\setminus \{S\}$ 
for the poset $\cP_S=\{S\cap T\mid T\in\cP\}$ (note that $Z^S$ could be empty if $S$ is empty). 
And let $\mathcal{J}^S$ be the poset 
$$\mathcal{J}^S:=\{T\in\cH\mid S\subseteq T\}.$$
The join $Z^S\star|\mathcal{J}^S|$ is in a natural way an $S$-space and we may form the induced $G$-space $G/S\times(Z^S\star|\mathcal{J}^S|)$. Its chain complex is a $G$-complex that consists of induced modules of the form $\mathcal{C_\bullet}(Z^S\star|\mathcal{J}^S|)\uparrow_S^G$. We claim that the quotient complex $D^{s+1}/D^s$ can be decomposed as
\begin{equation}\label{complexS}
\bigoplus_{S\in\cP\setminus\cH,h(S)=s}\tilde{\mathcal{C}}_{\bullet +1}(G/S\times(Z^S\star|\mathcal{J}^S|)
\end{equation} 
where $\tilde{\mathcal{C}}_{\bullet +1}$ is the augmented chain complex shifted by one.  To see it, consider a simplex $\sigma$ as in~\eqref{simplex} and put $g=xy$ for $y\in S$ so that
$$\sigma:x(yS_0\subseteq\dots \subseteq yS_{i-1}\subseteq S\subseteq S_{i+1}\subseteq\dots\subseteq S_k).$$
Then $\sigma$ yields a free direct summand of the chain complex of $D^{s+1}/D^s$ at dimension $k$ 
that we map onto the summand of $\tilde{\mathcal{C}}_{\bullet +1}(G/S\times(Z^S\star|\mathcal{J}^S|)$ associated to
$x\otimes(\sigma_1\star\sigma_2)$ with 
$$\sigma_1:y(S_0\subseteq\dots\subseteq S_{i-1})$$
and 
$$\sigma_2: S_{i+1}\subseteq\dots\subseteq S_k.$$

\begin{lem}\label{lem:cosetposets} 
With the previous notation, assume that for any $S\in\cP\setminus\cH$ with $h(S)=s$ we have that 
$Z^S$ is $(s-2)$-acyclic or empty if $s=0$ and $|\mathcal{J}^S|$ is $(n-1-s)$-acyclic. 
Then, if $C(\cP)$ is $(n-1)$-acyclic, so is $C(\cH)$. 
\end{lem}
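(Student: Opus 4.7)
The plan is to push $(n-1)$-acyclicity from $D^{h(\cP)}=C(\cP)$ down to $C(\cH)=D^{-1}$ through the filtration by running the long exact homology sequences of the consecutive pairs $(D^s,D^{s-1})$.

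For the relative homology, the decomposition \eqref{complexS} established just before the lemma gives, after remembering that $\ZZ[G/S]$ is free over $\ZZ$ and that the shift $\tilde{\mathcal{C}}_{\bullet+1}$ pushes the reduced homology of the join up one degree, an isomorphism
$$H_k(D^s,D^{s-1})\;\cong\;\bigoplus_{\substack{S\in\cP\setminus\cH\\ h(S)=s}}\ZZ[G/S]\otimes_\ZZ\tilde{H}_{k-1}\bigl(Z^S\star|\mathcal{J}^S|\bigr).$$
So it suffices to show each join $Z^S\star|\mathcal{J}^S|$ is sufficiently acyclic.

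This is where the join formula enters. From the classical identity
$$\tilde{H}_k(X\star Y)\;\cong\;\bigoplus_{i+j=k-1}\tilde{H}_i(X)\otimes\tilde{H}_j(Y)\;\oplus\;\bigoplus_{i+j=k-2}\mathrm{Tor}\bigl(\tilde{H}_i(X),\tilde{H}_j(Y)\bigr)$$
one reads off that if $X$ is $\alpha$-acyclic and $Y$ is $\beta$-acyclic, then $X\star Y$ is $(\alpha+\beta+2)$-acyclic. Plugging in $\alpha=s-2$ for $Z^S$ and $\beta=n-1-s$ for $|\mathcal{J}^S|$ yields $(n-1)$-acyclicity of the join; in the boundary case $s=0$ with $Z^S=\emptyset$ the join collapses to $|\mathcal{J}^S|$, which is already $(n-1)$-acyclic by hypothesis. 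Consequently $\tilde{H}_{k-1}(Z^S\star|\mathcal{J}^S|)=0$ whenever $k\leq n$, and so $H_k(D^s,D^{s-1})=0$ for all such $k$.

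Feeding this into the pair sequences
$$H_{k+1}(D^s,D^{s-1})\to H_k(D^{s-1})\to H_k(D^s)\to H_k(D^s,D^{s-1}),$$
both end terms vanish for $k\leq n-1$, so the inclusion induces isomorphisms $H_k(D^{s-1})\cong H_k(D^s)$ in that range. Telescoping from $s=0$ up to $s=h(\cP)$ yields $H_k(C(\cH))\cong H_k(C(\cP))$ for $k\leq n-1$, and the assumed $(n-1)$-acyclicity of $C(\cP)$ transfers to $C(\cH)$. The case $k=0$ is handled identically once one remembers that the shifted augmented chain complex in \eqref{complexS} correctly computes reduced relative homology through degree $0$.

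The main obstacle I foresee is lining up the degree shift in \eqref{complexS} with the indexing in the join formula so that the relative homology actually vanishes one degree beyond what is strictly required (namely through degree $n$ rather than $n-1$), together with the separate treatment of the boundary case $s=0$ with $Z^S=\emptyset$. The hypothesis that $Z^S$ be one-better than acyclic ($(s-2)$-acyclic instead of $(s-1)$-acyclic) is precisely what compensates for this shift and makes the long-exact-sequence chase close up on the nose.
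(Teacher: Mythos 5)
Your proof is correct and follows essentially the same route as the paper's: compute the relative homology of consecutive terms in the filtration via the decomposition \eqref{complexS}, use the join formula to establish vanishing of $\tilde{\H}_{k-1}(Z^S\star|\mathcal{J}^S|)$ in the required range (the paper invokes Mayer--Vietoris where you write out the K\"unneth-type join formula explicitly, but these are the same bound), then telescope through the long exact sequences. You even get the boundary indexing cleaner than the paper does ($D^{-1}=C(\cH)$ rather than $D^0$), and handle the $s=0$, $Z^S=\emptyset$ case explicitly.
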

\begin{proof} 
Using Mayer-Vietoris one can determine the homology groups $\H_r(Z^S\star|\mathcal{J}^S|)$ in terms of $\H_i(Z^S)$ and 
$\H_j(|\mathcal{J}^S|)$ for $i+j<r$ so the hypothesis imply that the complex $Z^S\star|\mathcal{J}^S|$ is $(n-1-s+s-2+2)=(n-1)$-acyclic.

By equation~\eqref{complexS}, 
$$\H_j(D^{s+1}/D^s)=\bigoplus_{S\in\cP\setminus\cH,h(S)=s}\tilde{\H}_{j-1}(Z^S\star|\mathcal{J}^S|)\uparrow_{S}^G=0$$
for $0\leq j\leq n$. And this implies the result:
to see it  assume by induction that $D^{s+1}$ is $R$-$(n-1)$-acyclic (the beginning of the induction is $D^{h(\cP)}=C(\cP)$ 
which is $(n-1)$-acyclic) and consider the long exact homology sequence of the short exact sequence of complexes 
$0\to D^s\to D^{s+1}\to D^{s+1}/D^s\to 0$
$$\ldots\to0=\H_{i+1}(D^{s+1})\to\H_{i+1}(D^{s+1}/D^s)\to\H_i(D^s)\to\H_i(D^{s+1})=0\to\ldots$$
Thus also $D^s$ is $(n-1)$-acyclic. Eventually, $C(\cH)=D^0$ is $(n-1)$-acyclic. 
\end{proof}

\begin{obs}\label{obs:Morse} 
As noted by an anonymous referee of this paper, Lemma~\ref{lem:cosetposets} can also be proven using Morse theory, 
having $h_{\cH^c}$ as the Morse function. The complex $Z^S\star|\mathcal{J}^S|$ is the link of $S$ in $D^{s+1}$ so 
the hypothesis of Lemma~\ref{lem:cosetposets} is in fact a condition on the acyclicity of the link. The proof presented 
in this paper exhibits the fact from Morse theory that acyclicity of the links yields isomorphisms between the homology 
groups of the involved subcomplexes. 
\end{obs}

\begin{obs}\label{obs:R} 
In Lemma~\ref{lem:cosetposets} we can substitute the instances of ``being acyclic'' by ``being $R$ acyclic'' 
for any unital commutative ring $R$.
\end{obs}

\section{proof of Theorem~\ref{teo:mainsigma}}
As stated above,  our proof of Theorem~\ref{teo:mainsigma} is based in Theorem~\ref{teo:sigmacomplex}. To do that we need a suitable complex $X$.  

Let $A_\Gamma$ be an Artin group and consider the {\sl clique poset} 
$$\cP=\{A_\Delta\mid\Delta\subseteq\Gamma\text{ clique}\}$$
(recall that a {\sl clique} is a complete subgraph). If the Artin group $A_\Gamma$ is of $\FC$-type, 
then any clique of $\Gamma$ generates a finite type Artin subgroup so the coset complex $C_G(\cP)=C(\mathcal{P})$ 
of $\cP$ is the {\sl modified Deligne complex} for $A_\Gamma$ considered by Charney and Davis in \cite{Charney-kpi1}. 
In \cite{GodelleParis} the modified Deligne complex  is used to construct what is called the {\sl clique cube-complex} which is a CAT-(0) cube complex. 
 
For Artin groups of $\FC$-type, the modified Deligne complex was shown to be contractible in 
\cite{Charney-kpi1} but for completeness, we give a direct easy proof of the fact that the coset complex $C_G(\cP)$ 
is contractible in general, i.e., for arbitrary Artin groups $A_\Gamma$ possibly without the $\FC$ condition.

\begin{lem}\label{lem:complex} 
Let $G=A_\Gamma$ be an Artin group and consider $\cP$ the  clique poset. The coset complex $C_G(\cP)=C(\mathcal{P})$ is contractible.
\end{lem}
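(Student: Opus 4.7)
The plan is to show $C(\cP)$ is contractible by covering it with $G$-translates of the cone subcomplex $N := |\cP|$ and invoking a nerve-type argument.

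First I would observe that since $\cP$ has $A_\emptyset = \{1\}$ as its unique minimum, $N$ is a simplicial cone with apex the vertex $\{1\}$, hence contractible. Via the $G$-action on $C(\cP)$ by left multiplication, each translate $gN$ is similarly a cone with apex $\{g\}$, also contractible. These translates cover $C(\cP)$: for any simplex $g_0 S_0 \subset g_1 S_1 \subset \ldots \subset g_k S_k$, the chain condition $g_0 \in g_i S_i$ forces $g_i S_i = g_0 S_i$ for every $i$, so the whole simplex lies in $g_0 N$.

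Next I would analyze the intersections. A simplex in $\bigcap_i g_i N$ must admit a $g_i$-prefixed representation for each $i$, which forces the subgroup $H := \langle g_i^{-1} g_j \rangle$ to lie in the chain's minimum $S_0$. This identifies $\bigcap_i g_i N$ with the realization of the subposet $\{S \in \cP : H \subseteq S\}$. Using the standard intersection property of parabolic subgroups of an Artin group, namely $A_{\Delta_1} \cap A_{\Delta_2} = A_{\Delta_1 \cap \Delta_2}$, this subposet has a unique minimum (the smallest $A_\Delta$ containing $H$), so its realization is itself a simplicial cone and hence contractible.

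With this good cover of $C(\cP)$ by contractibles having contractible intersections, the nerve lemma gives $C(\cP) \simeq $ nerve of the cover. The final step is to prove contractibility of the nerve itself, which I would tackle either by a direct inductive argument over an exhaustion by finite sub-covers (anchored at the vertex $\{1\}$ via Mayer--Vietoris), or by a discrete Morse-theoretic collapse on $C(\cP)$ using a fixed coset-representative section to pair each simplex $g_0 S_0 \subset \ldots$ with $S_0 \neq \{1\}$ to the canonically-extended simplex $\{g_0\} \subset g_0 S_0 \subset \ldots$, arranging for $\{1\}$ to be the single critical cell. The hardest step will be this global argument: the local contractibility of pieces and of their intersections is not in itself sufficient for contractibility of an infinite union, and the colimit over the $G$-cover must be controlled --- this is where one actually uses the specific algebraic structure of the Artin group (beyond just the minimum $A_\emptyset \in \cP$), likely through the chosen coset section or a Garside-type ordering.
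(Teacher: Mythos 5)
Your strategy is genuinely different from the paper's and, as you yourself flag, it is not complete: the final step is a real gap, not a technicality.

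The reduction you carry out is sound as far as it goes. The translates $gN$ of the cone $N=|\cP|$ do cover $C(\cP)$, each is a cone (hence contractible), and a nonempty finite intersection $\bigcap_i g_iN$ is the realization of the subposet $\{S\in\cP : H\subseteq S\}$ for $H=\langle g_i^{-1}g_j\rangle$. That this subposet has a unique minimum requires the intersection property for standard parabolics of Artin groups, $A_{\Delta_1}\cap A_{\Delta_2}=A_{\Delta_1\cap\Delta_2}$ (Van der Lek), applied to cliques; this is correct but is a substantial black box which the paper does not need. So via the nerve lemma for CW covers by subcomplexes you do get $C(\cP)\simeq\mathcal N$, the nerve of the cover.

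The gap is that $\mathcal N$ is not easier than $C(\cP)$. Its vertex set is $G$, and a finite subset $\{g_0,\dots,g_k\}$ spans a simplex of $\mathcal N$ exactly when the $g_i$ all lie in a common coset $hS$ with $S\in\cP$. This is a "coset nerve" carrying the same combinatorial difficulty as the original complex, and neither of your two proposed closings actually resolves it. The Morse-matching idea fails because $G\cP$ has no unique minimal element (the minima are all the singletons $\{g\}$), so a pairing of $g_0(S_0\subset\cdots)$ with $\{r\}\subset g_0(S_0\subset\cdots)$ for a chosen representative $r$ of $g_0S_0$ produces many unmatched simplices and no obviously acyclic matching; and the Mayer--Vietoris/exhaustion idea only controls homology piecewise and still needs a global argument that the infinite union has trivial homology/homotopy, which is precisely what is to be proved. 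So the "hardest step" you name is indeed where the proof stops.

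For contrast, the paper proceeds by induction on $|V_\Gamma|$: for $\Gamma$ non-complete it picks $\Gamma=\Gamma_1\cup\Gamma_2$ with no edges between $\Gamma_1\setminus\Gamma_0$ and $\Gamma_2\setminus\Gamma_0$ (where $\Gamma_0=\Gamma_1\cap\Gamma_2$), obtains the amalgam $A_\Gamma=A_{\Gamma_1}*_{A_{\Gamma_0}}A_{\Gamma_2}$ directly from the presentation, and maps $C(\cP)$ to the barycentric subdivision of the Bass--Serre tree by $gA_\Delta\mapsto gA_{\Gamma_i}$. Quillen's fiber lemma applies because each fiber is a translate of the coset complex of a strictly smaller clique poset, contractible by induction, and the Bass--Serre tree is contractible. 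That argument both avoids the parabolic intersection theorem and, crucially, closes the global step that your nerve reduction leaves open.
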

\begin{proof} 
We argue by induction on the number of vertices of $\Gamma$. Observe first that the result is obvious if $\Gamma$ is complete, 
because then $G$ itself lies in $\cP$. If $\Gamma$ is not complete we may decompose $\Gamma=\Gamma_1\cup\Gamma_2$ with 
$\Gamma_1,\Gamma_2\subsetneq\Gamma$ such that for $\Gamma_0=\Gamma_1\cap\Gamma_2$ no vertex in $\Gamma_1\setminus\Gamma_0$ 
is linked to any vertex in $\Gamma_2\setminus\Gamma_0$. This decomposition induces a decomposition of $G$ as the free product with amalgamation
$$G=A_{\Gamma_1}\star_{A_{\Gamma_0}}A_{\Gamma_2}.$$
Let  $C(\mathcal{P}_1)$, $C(\mathcal{P}_2)$ and  $C(\mathcal{P}_0)$ be the corresponding coset complexes for the clique posets of 
$A_{\Gamma_1}$, $A_{\Gamma_2}$ and $A_{\Gamma_0}$. By induction we may assume that they are all contractible. 
Consider the poset
$$\mathcal{BS}=\{gA_{\Gamma_1}\mid g\in G\}\cup\{gA_{\Gamma_2}\mid g\in G\}\cup\{gA_{\Gamma_0}\mid g\in G\}.$$
The geometric realization of this poset is precisely the barycentric subdivision of the Bass-Serre tree associated to the free amalgamated product above. Consider the map 
$$\begin{aligned}f:C(\mathcal{P})&\to\mathcal{BS}\\
gA_\Delta&\mapsto\left\{ 
\begin{aligned}
&gA_{\Gamma_0}\text{ if }\Delta\subseteq \Gamma_0\\
&gA_{\Gamma_1}\text{ if }\Delta\subseteq \Gamma_1, A_{\Delta}\not\leq A_{\Gamma_0}\\
&gA_{\Gamma_2}\text{ if }\Delta\subseteq \Gamma_2, A_{\Delta}\not\leq A_{\Gamma_0}.\\
\end{aligned}\right.
\end{aligned}
$$
Observe that each clique of $\Gamma$ is a subgraph either of $\Gamma_1$ or of $\Gamma_2$. 
Moreover, if $gS\subseteq gT$ with $S$ and $T$ both in $\cP$, then 
$f(gS)\subseteq f(sT)$ so it is a well-defined poset map and for any $gA_{\Gamma_i}\in\mathcal{BS}$, 
$$\{gS\in C(\mathcal{P})\mid f(gS)\leq gA_{\Gamma_i}\}=
gC_{A_{\Gamma_i}}(\mathcal{P}_i)$$
i.e., it is the coset poset of the clique poset of $A_{\Gamma_i}$ shifted by $g$. 
By induction, the posets $C_{A_{\Gamma_i}}(\cP_i)$ have contractible 
geometric realizations for $i=0,1,2$. So we may apply Quillen poset map Lemma (\cite{Benson}) and deduce that $f$ 
induces a homotopy equivalence between the geometric realizations. As the geometric realization $|\mathcal{BS}|$ 
is contractible, we deduce the same for the geometric realization $|C(\mathcal{P})|$.
\end{proof}

Note that the Artin group acts on the clique poset so we have a nice action on the geometric realization. However, 
this is not what we need to apply Theorem~\ref{teo:sigmacomplex} because the stabilizers of this action are not 
nice enough. In order to construct our suitable $X$, we will also need an auxiliary Lemma.

\begin{lem}\label{lem:spheres} 
Let $\Delta$ be a (non empty) complete graph with $s$ vertices and with $S:=A_\Delta$ even of $\FC$-type and consider the simplicial complex $Z^S$ with simplices
$$hS_0\subseteq hS_1\subseteq\dots \subseteq hS_r$$
for $h\in A_\Delta$ and each $S_j$ a special {\sl proper} subgroup of $A_\Delta$. Then $Z^S$ is $(s-2)$-acyclic.
\end{lem}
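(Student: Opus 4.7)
The plan is to exploit the direct product decomposition of $A_\Delta$ together with a join formula for the coset complex, reducing everything to two base cases.

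Because $\Delta$ is a clique in an even graph and $A_\Delta$ is of $\FC$-type, the Coxeter group $W_\Delta$ is finite. A quick inspection of the classification shows that the only finite irreducible Coxeter groups with all labels even are $A_1=\ZZ/2$ and $I_2(2k)$ for $k\geq 2$: any connected rank-$3$ all-even diagram already produces an infinite Coxeter group. Hence the edges of $\Delta$ with label $>2$ form a matching, and
$$A_\Delta \;\cong\; D_{2k_1}\times\cdots\times D_{2k_r} \times \ZZ^{m}, \qquad 2r+m=s,$$
where the $D_{2k_i}$ factors correspond to the matched edges and the $\ZZ$ factors to the remaining vertices of $\Delta$.

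I would then establish a join formula $Z^{G_1\times G_2}\simeq Z^{G_1}\star Z^{G_2}$ for direct products. Parabolic subgroups of $G_1\times G_2$ are products of parabolics in $G_1$ and $G_2$, so cosets in the product are pairs of cosets, and the full coset poset $\widehat{\cP}$ of $G_1\times G_2$ (including the whole group) is the product poset $\widehat{\cP}_1\times\widehat{\cP}_2$. Each $C_{G_i}(\widehat{\cP}_i)$ is a cone over $Z^{G_i}$ (with apex the vertex $G_i$) and hence contractible; by the nerve-of-product theorem the realization of the product poset is homotopy equivalent to the product of the two cones. The complex $Z^{G_1\times G_2}$ is precisely the link of the top vertex $(G_1,G_2)$ in $C_{G_1\times G_2}(\widehat{\cP})$, which via the identification above corresponds to the link of the apex of a product of two cones $CA\times CB$, and the standard deformation retract $CA\times CB\setminus\{\mathrm{apex}\}\simeq(A\times CB)\cup(CA\times B)=A\star B$ yields the join formula.

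For the base cases, $Z^{\ZZ}$ is the discrete set of cosets of $\{1\}$ in $\ZZ$, a wedge of $0$-spheres; and $Z^{D_{2k}}$ is a $1$-dimensional complex (since the only proper inclusions among $\{1\},\langle x\rangle,\langle y\rangle$ are $\{1\}\subsetneq\langle x\rangle$ and $\{1\}\subsetneq\langle y\rangle$), connected because $D_{2k}$ is generated by $x,y$, and therefore a wedge of $1$-spheres. Iterating the join formula over the decomposition above and using that the join of wedges of spheres of dimensions $a_1,\ldots,a_p$ is a wedge of spheres of dimension $a_1+\cdots+a_p+(p-1)$, one concludes that $Z^{A_\Delta}$ is a wedge of spheres of dimension $r\cdot 1+m\cdot 0+(r+m-1)=2r+m-1=s-1$, as required. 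The main obstacle I expect is a clean proof of the join formula: both the nerve-of-product step and the link-of-apex identification are standard in poset topology, but they must be combined with care; an alternative route is to apply Quillen's fiber lemma to a suitable poset map from $\widehat{\cP}\setminus\{(G_1,G_2)\}$ onto the poset join $\widehat{\cP}_1\star\widehat{\cP}_2$.
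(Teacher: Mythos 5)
Your argument is correct, and it takes a genuinely different route from the paper. The paper proceeds by constructing an explicit open-style covering of $Z^S$: for each vertex $v$ of $\Delta$ it considers the subsets $X_{v,g}=\{hS'\mid hS'\subseteq gA_{\Delta\setminus v}\}$, shows that nonempty intersections of such sets are single cosets $gA_{\Delta'}$ (hence contractible), and then invokes the nerve lemma to identify $Z^S$ with the $s$-fold join $\star_{v\in\Delta}\Omega_v$ of the discrete index sets $\Omega_v$, from which the wedge-of-$(s-1)$-spheres conclusion follows directly. You instead use the direct product decomposition $A_\Delta\cong D_{2k_1}\times\cdots\times D_{2k_r}\times\ZZ^m$ (which indeed follows from the even $\FC$-type hypothesis, since the label-$>2$ edges form a matching) together with a join formula $Z^{G_1\times G_2}\simeq Z^{G_1}\star Z^{G_2}$, reducing to the two base cases $Z^{\ZZ}$ (a discrete set) and $Z^{D_{2k}}$ (a connected graph). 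Your base cases and dimension count $2r+m-1=s-1$ are right. The two approaches are closely related — the paper's nerve can be seen as a finer decomposition, refining each dihedral factor's contribution into a join of two discrete sets, one per vertex of the matched edge, so the paper's join is over the $s$ vertices rather than over the $r+m$ direct factors — but the organising principle is different: a Nerve Lemma argument on one side versus a product-poset/join decomposition on the other. The steps you defer (the homotopy equivalence $|P\times Q|\simeq|P|\times|Q|$ for these infinite but locally finite coset posets, and the identification of the realization of a product poset minus its top with the join of the two truncated posets, e.g.\ via writing it as the union $(P'\times Q)\cup(P\times Q')$ glued along $P'\times Q'$ and applying the gluing lemma) are standard but should be cited or worked out in a final write-up, as should the fact that join distributes over possibly infinite wedges.
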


\begin{proof} We will denote $\Delta_v=\Delta\setminus \{v\}$. 
For each $g\in S$, $v\in\Delta$ let
$gX_{v}$ be the subcomplex of $Z^S$ induced by $\{gA_{\Omega}\mid \Omega\subseteq\Delta_v\}.$ Obviously, the set $\{gX_{v}\mid g\in S,v\in\Delta\}$ is a covering of $Z^S$ as
each special proper subgroup of $A_\Delta$ is inside one of the form $gA_{\Delta_v}$.  

Observe that for any $\Delta_1\subset\Delta$, if $g_1,g_2\in S$ are such that $g_1A_{\Delta_1}\neq g_2A_{\Delta_1}$, then 
$g_1A_{\Delta_1}\cap g_2A_{\Delta_1}=\emptyset$. And if we have $\Delta_1,\Delta_2$ and $g_1,g_2\in S$ so that $g_1A_{\Delta_1}\cap g_2A_{\Delta_2}\neq\emptyset$, then for some $x\in A_{\Delta_1}$, $y\in A_{\Delta_2}$, $g_1x=g_2y$ so 
$$g_1A_{\Delta_1}\cap g_2A_{\Delta_2}=g_1(A_{\Delta_1}\cap xA_{\Delta_2})=g_1A_{\Delta_1\cap\Delta_2}$$
because $g\in A_{\Delta_1}\cap xA_{\Delta_2}$ implies $x^{-1}g\in A_{\Delta_1}\cap A_{\Delta_2}=A_{\Delta_1\cap\Delta_2}$ ($A_{\Delta_1}\cap A_{\Delta_2}=A_{\Delta_1\cap\Delta_2}$ by \cite{Van}). This implies that any set of pairwise distinct $g_0X_{v_0},\ldots,g_kX_{v_k}$ has empty intersection if two of the $v_i$'s are equal and also that whenever such as set has non empty intersection, then the intersection is the subcomplex induced by 
$\{gA_{\Omega}\mid \Omega\subseteq\Delta_{v_1}\cap\ldots\cap\Delta_{v_k}\}$ which is contractible (it is the geometric realization of a poset with a maximal element). 
The last property implies that $Z^S$ is homotopy equivalent to the nerve of the covering (see \cite[Chap. VII Theorem 4.4]{BrownBook}) which we denote $N_\Delta$. 
The nerve has $k$-simplices the sets  $\{g_0X_{v_0},\ldots,g_kX_{v_k}\}$ with non empty intersection and the discussion above implies that the cardinality of such as set can be at most $s$ so this nerve  is a complex of dimension $s-1$. 
To deduce that  this complex is $(s-2)$-connected one can use \cite[(4.18), Theorem (2.15)]{Deligne-immeubles}, but it is also possible to give a direct argument as follows.
As we are assuming that $S$ is even of $\FC$-type, we have a decomposition
$$S=S_1\times\ldots\times S_t$$
where each $S_i=A_{\Delta_i}$ is either infinite cyclic or Artin dihedral (of even type). This means that for each $g\in S$ and $v\in\Delta$, $gA_{\Delta_v}=g_iA_{\Delta_v}$ for some $g_i\in S_i$ where $i$ is the index such that $v\in\Delta_i$. From this one deduces that $N_\Delta$ is the join
$$N_\Delta=N_{\Delta_1}\star\ldots\star N_{\Delta_t}$$
where each $N_{\Delta_i}$ is the nerve associated to the same construction but for $S_i$. Moreover, if $S_i$ is infinite cyclic, obviously $N_{S_i}$ is a disjoint union of points, thus non empty and if $S_i$ is Artin dihedral, then it is easy to see that $N_{S_i}$ is 0-connected. As $s=a+2b$ where $a$ is the number of $S_i$'s which are infinite cyclic, we get the result.
 \end{proof}.

At this point, we are able to construct the desired $X$. Note that Theorem~\ref{teo:sigmacomplex} together with the next result imply Theorem~\ref{teo:mainsigma}.

In the introduction we have defined the subposet $\mathcal{B}^\chi$ of $\mathcal{P}$ as those $A_\Delta$ for $\Delta\subseteq\Gamma$ clique
such that for each vertex $v$ in $\Delta$ either $v$ is dead or $v\in e$ for $e$ a dead edge in $\Delta$, this includes the case when $\Delta=\emptyset$. The hypothesis that $A_\Gamma$ is even of $\FC$-type implies that $A_\Delta$ is a direct product of dihedral groups (corresponding to edges with label bigger than 2) and of infinite cyclic groups (generated by dead vertices). 
Taking into account that the center of the dihedral group generated by, say, $x$ and $y$ is the infinite cyclic group generated by $xy$ we see that for such a $\Delta$ we have  $\chi(Z(A_\Delta))=0$. The converse is also obvious. So we have
$$\mathcal{B}^\chi=\{A_\Delta\in\mathcal{P}\mid \chi(Z(A_\Delta))=0\}.$$
Denote 
$$\mathcal{H}^\chi:=\mathcal{P}\setminus\mathcal{B}^\chi,$$
one has the following result on the homotopy of the geometric realization of the coset poset $X:=|C\mathcal{H}^\chi|$.

\begin{prop}
Let $A_{\Gamma}$ be an even Artin group of $\FC$-type. Let $\chi:A_\Gamma\to\RR$ be a character. If the strong $n$-link condition holds, then the geometric realization of the coset poset $X:=|C\mathcal{H}^\chi|$ is $(n-1)$-acyclic. 
\end{prop}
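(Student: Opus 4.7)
The plan is to apply Lemma~\ref{lem:cosetposets} with $\cP$ the clique poset, $\cH=\mathcal{H}^\chi$ (so $\cP\setminus\cH=\mathcal{B}^\chi$) and height function $h(A_\Delta):=|\Delta|$. Since $C(\cP)$ is contractible by Lemma~\ref{lem:complex}, the task reduces to verifying, for each $S=A_\Delta\in\mathcal{B}^\chi$ of height $s=|\Delta|$, that $Z^S$ is $(s-2)$-acyclic (or empty when $s=0$) and that $|\mathcal{J}^S|$ is $(n-1-s)$-acyclic. The first property is immediate from Lemma~\ref{lem:spheres}, so all the work concentrates on the second.

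For $|\mathcal{J}^S|$ I would first identify it with the poset $\mathcal{C}(\Delta):=\{\Omega\mid\emptyset\neq\Omega\text{ a clique of }\lk_\Gamma(\Delta),\ A_{\Delta\cup\Omega}\in\mathcal{H}^\chi\}$, using the bijection $A_{\Delta\cup\Omega}\leftrightarrow\Omega$, and then prove that the natural inclusion $f:\hat{\lk}_{\LL^\chi}(\Delta)\hookrightarrow\mathcal{C}(\Delta)$ (of posets of non-empty simplices) is a homotopy equivalence via Quillen's Theorem~A. Combined with the strong $n$-link hypothesis on $\hat{\lk}_{\LL^\chi}(\Delta)$, this identifies $|\mathcal{J}^S|$ up to homotopy with $\hat{\lk}_{\LL^\chi}(\Delta)$ and yields the required $(n-1-s)$-acyclicity.

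Two claims require justification. First, $f$ must land in $\mathcal{C}(\Delta)$: for $\Omega\in\hat{\lk}_{\LL^\chi}(\Delta)$ and any $u\in\Omega$, I would check that $u$ is alive and that no edge from $u$ in $\Delta\cup\Omega$ is dead. Being alive, and having alive edges inside $\Omega$, is immediate from the definition of $\hat{\lk}_{\LL^\chi}(\Delta)$. For an edge $\{u,v\}$ with $v\in\Delta$: if $v$ is dead then $m_{\{u,v\}}=m_u\neq 0$ so $\{u,v\}$ is non-dead; if $v$ is alive, the definition of $\mathcal{B}^\chi$ forces $v$ to lie in some dead edge $\{v,v'\}\subset\Delta$ of label $>2$, and the even $\FC$-type hypothesis ensures that the triangle $\{u,v,v'\}$ has at most one label-$>2$ edge, so $\{u,v\}$ has label $2$ and is non-dead. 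Hence $A_{\Delta\cup\Omega}\in\mathcal{H}^\chi$.

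Second, for Quillen's Theorem~A I need each fiber $f^{-1}(\mathcal{C}(\Delta)_{\leq\Omega})$ to be contractible. Given $\Omega\in\mathcal{C}(\Delta)$, a vertex $u\in\Delta\cup\Omega$ that is alive and incident to no dead edge of $\Delta\cup\Omega$ must in fact lie in $\Omega$ (because each vertex of $\Delta$ is either dead or in a dead edge of $\Delta$), and the same triangle argument shows $\{u\}\in\hat{\lk}_{\LL^\chi}(\Delta)$ and $\Omega'\cup\{u\}\in\hat{\lk}_{\LL^\chi}(\Delta)$ for every $\Omega'\subseteq\Omega$ that is a simplex of $\hat{\lk}_{\LL^\chi}(\Delta)$. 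The zig-zag of monotone self-maps $\mathrm{id}\Rightarrow(\Omega'\mapsto\Omega'\cup\{u\})\Leftarrow\mathrm{const}_{\{u\}}$ on the fiber then renders its realization contractible. The hard part will be exactly this triangle argument: without even $\FC$-type the edge $\{u,v\}$ between an alive $u\in\Omega$ and an alive $v\in\Delta$ may itself be dead, in which case $f$ fails to be well-defined and the cone argument collapses, so this is where the hypothesis on $A_\Gamma$ enters essentially. Once both steps go through, Quillen's Theorem~A delivers the homotopy equivalence and Lemma~\ref{lem:cosetposets} concludes the $(n-1)$-acyclicity of $|C\mathcal{H}^\chi|$.
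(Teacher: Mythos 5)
Your proposal is correct and follows essentially the same route as the paper: apply Lemma~\ref{lem:cosetposets} with $\cP$ the clique poset, $\cH=\mathcal{H}^\chi$ and $h=|\cdot|$; invoke Lemma~\ref{lem:spheres} for $Z^S$; and identify $|\mathcal{J}^S|$ with $\hat{\lk}_{\LL^\chi}(\Delta)$ up to homotopy via Quillen's poset-map lemma applied to the very same map $A_\sigma\mapsto A_{\sigma\star\Delta}$, which under your identification of $\mathcal{J}^S$ with $\mathcal{C}(\Delta)$ is exactly your inclusion $f$. The only real variation is the fiber-contractibility step, where you cone off a chosen vertex $u\in\Omega$, while the paper instead writes $\sigma\cap\LL^\chi$ explicitly as a join and observes that the non-$S^0$ factors form a non-empty contractible join factor; both work (and your cone argument is arguably cleaner), but you should state explicitly why such a $u$ exists --- namely, $\chi(Z(A_{\Delta\cup\Omega}))\neq 0$ forces a central generator $c$ or $(ab)^k$ with nonzero $\chi$, and the even $\FC$-type triangle condition then yields the desired $u$.
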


\begin{proof} 
Use Lemma~\ref{lem:cosetposets} for $\cP$ the clique poset with $h(A_\Delta)=|\Delta|$. Fix 
$S=A_\Delta\in\mathcal{B}^\chi=\mathcal{P}\setminus\mathcal{H}^\chi$ with $h(S)=s$. The complex denoted $Z^S$ in Lemma~\ref{lem:cosetposets}
  is the simplicial complex of Lemma~\ref{lem:spheres} and $\mathcal{J}^S$ is the  poset
 $$\mathcal{J}^S:=\{T\in\cP\mid S\subseteq T,\chi(Z(T))\neq 0\}.$$ 
 Now, consider the poset 
   $$\mathcal{L}^S:=\{L=A_{\sigma}\in\cP\mid \emptyset\neq\sigma\text{ clique of }{\lk}_{\LL^\chi}(\Delta)\}.$$
Let $A_\sigma\in\mathcal{L}^S$. Then for $T=A_{(\sigma\star\Delta)}$, we have $S\leq T$ and $T\in\cP$. We claim that $T\in\mathcal{J}^S$. To see it, note that as $\sigma\neq\emptyset$ and $\sigma\subseteq\LL^\chi$, $Z(A_\sigma)\neq 0$. As $\Gamma$ is even of $\FC$-type, this implies that either there is some $v\in\sigma$, $v\in Z(A_\sigma)$ with $\chi(v)\neq 0$ or there are $v,w\in\sigma$, $(vw)^k\in Z(A_\sigma)$ for some $k$ with $\chi(v)+\chi(w)\neq 0$. Again, the condition that $\Gamma$ is of $\FC$-type implies that in the second case, $(vw)^k\in Z(T)$ so $\chi(Z(T))\neq 0$. In the first case, either $v\in Z(T)$ so again $\chi(Z(T))\neq 0$ or there is some $w\in\Delta$ with $(vw)^k\in Z(T)$ for some $k$. In this case moreover $w\in Z(S)$ thus $\chi(w)=0$. Therefore $\chi((vw)^k)\neq 0$ and again $\chi(Z(T))\neq 0$. The claim follows and therefore
we have a well defined poset map
   $$
\begin{aligned}
f: \mathcal{L}^S&\to\mathcal{J}^S\\
 A_\sigma&\mapsto A_{(\sigma\star\Delta)}.\\
 \end{aligned}$$
 We claim that this map induces a homotopy equivalence between the corresponding geometric realizations. To see it, let $T\in\mathcal{J}^S$ and consider
 $$f^{-1}_{\leq T}=\{U\in \mathcal{L}^S\mid f(U)\leq T\}.$$
 By Quillen's poset map Theorem (see~\cite{Benson}) it suffices to check that the poset $f^{-1}_{\leq T}$ has contractible geometric realization. Put $T=A_\nu$. Then $\nu$ is a clique of $\Gamma$ 
 such that $\nu=\Delta\star\sigma$ for some $\emptyset\neq\sigma$ clique in $\lk_\Gamma(\Delta)$. We can describe $\sigma$ as a join
 $$\sigma=e_1\star\dots\star e_t\star p_1\star\dots p_s$$
 where each $e_i$ is a single edge with label $>2$ and each $p_i$ is a single point and all the edges not in some $e_i$ are labeled by 2.
 We may order them so that $\chi(e_1),\ldots,\chi(e_l)=0$, $\chi(e_{l+1}),\ldots,\chi(e_t)\neq 0$, $\chi(p_1),\ldots,\chi(p_r)=0$, $\chi_(p_{r+1}),\ldots,\chi(p_s)\neq 0$. Then 
  $$\sigma\cap\LL^\chi=w_1\star\dots\star w_l\star e_{l+1}\star\dots \star e_t\star p_{r+1}\star\dots\star p_s$$
 where each $w_i$ is the disconnected set consisting of the two vertices of each $e_i$ has as barycentric subdivision precisely the geometric realization of the poset $f^{-1}_{\leq T}$. As $e_{l+1}\star\dots \star e_t\star p_{r+1}\star\dots\star p_s$ is either contractible or empty, so show that $f^{-1}_{\leq T}$ is contractible we only have to show that $e_{l+1}\star\dots \star e_t\star p_{r+1}\star\dots\star p_s$ is not empty. 
 As $T\in\mathcal{J}^S$, $\chi(Z(T))\neq 0$. If there is some $v$ vertex of $\nu$ with $v\in Z(T)$ and $\chi(v)\neq 0$ then the fact that $\chi(Z(S))=0$ implies $v\in\sigma$ so $v$ belongs to $\{p_{r+1},\ldots,p_s\}$. So we are left with the case when $\chi(vw)\neq 0$ for $v,w$ vertices of an edge of $\nu$ with label $>2$. If, say, $v$ lies in $\Delta$, then $v\in Z(S)$ so $\chi(v)=0$ and we deduce $\chi(w)\neq 0$. Moreover, in this case the $\FC$-condition implies that $w$ is in the center of $A_\sigma$, i.e., $w$ belongs to $\{p_{r+1},\ldots,p_s\}$. So we may assume that both $v,w$ lie in $\sigma$ so the edge joining them lies in the set $\{e_{l+1},\ldots,e_t\}$.
\end{proof}

We finish this section with a couple of example to illustrate how to apply Theorem~\ref{teo:mainsigma}.

\begin{exam}
Let $\Gamma$ be the graph and $\chi$ the character
$$
\begin{tikzpicture}[scale=1.5, transform shape]
\tikzstyle{subj} = [circle, minimum width=3pt, fill, inner sep=0pt]
\tikzstyle{obj}  = [circle, minimum width=3pt, draw, inner sep=0pt]
\node[subj] (n1) at (1,1) {};
\node[subj] (n2) at (2,1) {};
\node[subj] (n3) at (2,2) {};
\node[subj] (n4) at (1,2) {};
\draw (0.3,1.5) node {{\small{$\Gamma$}}};
\draw (1,2) node[left] {\tiny{$a$}} ;
\draw (0.75,2) node[left] {\tiny 1} ;
\draw (0.75,1) node[left] {\tiny 0} ;
\draw (2.25,2) node[right] {\tiny -1} ;
\draw (2.25,1) node[right] {\tiny1} ;
\draw (2,2) node[right] {\tiny{$b$}};
\draw (1,1) node[left] {\tiny{$c$}} ;
\draw (2,1) node[right] {\tiny{$d$}} ;
\draw(1.5,2) node[above]{\tiny{4}};
\draw(1.5,1) node[below]{\tiny{4}};
\draw(1,1.5) node[left]{\tiny{2}};
\draw(2,1.5) node[right]{\tiny{2}};
\draw(1.5,1.5) node[right]{\tiny{2}};
\draw (1,1) -- (1,2) -- (2,2) -- (2,1)--(1,1);
\draw (1,2)--(2,1);
\draw (3.8,1.5) node {{\small{$\LL^\chi$}}};
\node[subj] (n2) at (2+3,1) {};
\node[subj] (n3) at (2+3,2) {};
\node[subj] (n4) at (1+3,2) {};
\draw (1+3,2) node[left] {\tiny{$a$}} ;
\draw (0.75+3,2) node[left] {\tiny 1} ;
\draw (2.25+3,2) node[right] {\tiny -1} ;
\draw (2.25+3,1) node[right] {\tiny 1} ;
\draw (2+3,2) node[right] {\tiny{$b$}};
\draw (2+3,1) node[right] {\tiny{$d$}} ;
\draw(2+3,1.5) node[right]{\tiny{2}};
\draw(1.5+3,1.5) node[right]{\tiny{2}};
\draw (2+3,2) -- (2+3,1);
\draw (1+3,2)--(2+3,1);
\end{tikzpicture} 
$$
For $\Delta=(ab)$, $Z(A_\Delta)$ is generated by $ab$ so $\chi(Z(A_\Delta))=0$.

For $\Delta=(a,b,d)$, $Z(A_\Delta)$ is generated by $ab$, $d$ so $\chi(Z(A_\Delta))\neq 0$. 

We get: $\mathcal{P}\setminus{\mathcal{H}_\chi}=\{\emptyset,(c),(ab)\}$. The links are:

$$
\begin{tikzpicture}[scale=1.5, transform shape]
\tikzstyle{subj} = [circle, minimum width=2pt, fill, inner sep=0pt]
\tikzstyle{obj}  = [circle, minimum width=2pt, draw, inner sep=0pt]
\draw(-2,0) node{\tiny{$\lk_{\LL^\chi}(\emptyset)=\LL^\chi$}};
\draw(-0.2,0) node[left=-15] {\tiny{$\lk_{\LL^\chi}(c)=$}};
\draw(0.5,0.15)--(0.8,-0.15);
\node[subj] (n1) at (0.5,0.15) {};
\node[subj] (n2) at (0.8,-0.15){};
\draw(0.8,-0.15) node[right]{\tiny{$d$}};
\draw(0.5,0.15) node[left]{\tiny{$a$}};
\draw(2,0) node{\tiny{$\lk_{\LL^\chi}(ab)=d$}};
\end{tikzpicture} 
$$
All the links are contractible so $\chi\in\Sigma^\infty(A_\Gamma,\ZZ)$.
\end{exam}

\begin{exam}
Let $\Gamma$ be the graph  and $\chi$ the character
$$
\begin{tikzpicture}[scale=1.5, transform shape]
\tikzstyle{subj} = [circle, minimum width=3pt, fill, inner sep=0pt]
\tikzstyle{obj}  = [circle, minimum width=3pt, draw, inner sep=0pt]
\node[subj] (n1) at (1,1) {};
\node[subj] (n2) at (2,1) {};
\node[subj] (n3) at (2,2) {};
\node[subj] (n4) at (1,2) {};
\draw (0.3,1.5) node {{\small{$\Gamma$}}};
\draw (1,2) node[left] {\tiny{$a$}} ;
\draw (0.75,2) node[left] {\tiny 1} ;
\draw (0.75,1) node[left] {\tiny 0} ;
\draw (2.25,2) node[right] {\tiny -1} ;
\draw (2.25,1) node[right] {\tiny1} ;
\draw (2,2) node[right] {\tiny{$b$}};
\draw (1,1) node[left] {\tiny{$c$}} ;
\draw (2,1) node[right] {\tiny{$d$}} ;
\draw(1.5,2) node[above]{\tiny{4}};
\draw(1.5,1) node[below]{\tiny{4}};
\draw(1,1.5) node[left]{\tiny{2}};
\draw(2,1.5) node[right]{\tiny{2}};
\draw (1,1) -- (1,2) -- (2,2) -- (2,1)--(1,1);
\draw (3.8,1.5) node[scale=.7,right] {$\mathcal L^\chi$};
\node[subj] (n2) at (2+3,1) {};
\node[subj] (n3) at (2+3,2) {};
\node[subj] (n4) at (1+3,2) {};
\draw (1+3,2) node[left] {\tiny{$a$}} ;
\draw (0.75+3,2) node[left] {\tiny 1} ;
\draw (2.25+3,2) node[right] {\tiny -1} ;
\draw (2.25+3,1) node[right] {\tiny 1} ;
\draw (2+3,2) node[right] {\tiny{$b$}};
\draw (2+3,1) node[right] {\tiny{$d$}} ;
\draw(2+3,1.5) node[right]{\tiny{2}};
\draw (2+3,2) -- (2+3,1);
\end{tikzpicture} 
$$
As before: $\mathcal{P}\setminus{\mathcal{H}_\chi}=\{\emptyset,(c),(ab)\}$. However, 
$\lk_{\LL^\chi}(\emptyset)=\LL^\chi$ is not connected, 
so $[\chi]$ might not even be in $\Sigma^1(A_\Gamma,\ZZ)$.

Incidentally, in this case, the hypothesis of Corollary~\ref{corol:sigmachar} below is satisfied for $p=2$,
hence  the converse of Theorem~\ref{teo:mainsigma} also holds true so
$$[\chi]\not\in\Sigma^1(A_\Gamma,\ZZ).$$
\end{exam}

\section{The free part of the homology groups of Artin kernels}\label{sec:free}

Let $A_\Gamma$ be an even Artin group of $\FC$-type  and $\chi:A_\Gamma\to\ZZ$ a discrete character. In this section, 
we are interested in the homology groups $\H_n(A_\Gamma^\chi,\FF)$ where $\FF$ is a field of characteristic $p$ (either zero or a prime). 
More precisely, we want to characterize when they are finite $\FF$ dimensional and, more generally, to compute their 
free part when seen as $\FF[t^{\pm 1}]$-modules via~$\chi$.

To do that we first develop a $p$-local version of some of the notions that we used above.

We say that an edge $e$ of $\Gamma$ with label $2\tilde{\ell}_e$ is {\sl $p$-dead} if $m_e=\chi(v)+\chi(w)=0$ and $p\mid \tilde{\ell}$. 
In \cite{Blasco-conmar-ji-Homology}, $p$-dead edges were called $\FF$-resonant (recall that $\FF$ is a field of characteristic $p$).

We set $\LL^\chi_p$ for the subgraph of $\Gamma$ that we get when we remove dead vertices and open $p$-dead edges. 
This notation is consistent with $\LL^\chi_0$ because no edge can be $0$-dead. Note that the set of dead edges is the union of 
the sets of $p$-dead edges where $p$ runs over all prime numbers and therefore $\LL^\chi=\bigcap_{p\textrm{ prime}}\LL_p^\chi$. 

Let $\mathcal{B}_p^\chi$ be the set of those $A_{\Delta}\in\cP$ for a clique $\Delta\subseteq\Gamma$ such that
for each vertex $v$ in $\Delta$ either $v$ is dead or $v\in e$ for $e$ a dead edge in $\Delta$ (this includes the case 
when $\Delta=\emptyset$). Note that as edges which are $p$-dead for some $p$ are dead, this condition implies 
that~$\chi(Z(A_{\Delta}))=0$. 

\begin{dfn} Assume that for any 
$A_\Delta\in\mathcal{B}_p^\chi$ with $|\Delta|\leq n$ the link $\hat{\lk}_{\LL_p^\chi}(\Delta)$ 
is $p-(n-1-|\Delta|)$-acyclic, meaning that its homology up to degree $(n-1-|\Delta|)$ with coefficients in a field of characteristic 
$p$ vanishes. Then we say that $\chi$ satisfies \emph{the strong $p$-$n$-link condition}.
\end{dfn}

The homology groups $\H_n(A_\Gamma^\chi,\FF)$ are precisely the homology groups of the  $\FF$-chain complex 
$C_n(\Sal^\chi_\Gamma)$ described in \cite[Section 2]{Blasco-conmar-ji-Homology}. 
This complex was obtained using the $\chi$-cyclic cover of the Salvetti
complex of $A_\Gamma$ (see~\cite{Salvetti-topology,Charney-finite,Paris-lectures}) and has
$$C_n(\Sal^\chi_\Gamma)=\FF[t^{\pm 1}]\otimes_\FF \bar{C}(\hat\Gamma)_{n-1}$$ 
where $\bar{C}(\hat\Gamma)_{n-1}$ is the augmented chain complex of the flag complex $\hat\Gamma$ shifted by one. 
The differential of $C_n(\Sal^\chi_\Gamma)$ can be described as follows (see \cite{Blasco-conmar-ji-Homology}, after Remark 2.3). 
For each edge $e\in E_\Gamma$ let $2\tilde{\ell}_e$ be its label in $\Gamma$ and denote $q_k(x)=(x^k-1)/(x-1)$. 
Then for any $X\subseteq\Gamma$ complete we have
\begin{equation}\label{eq:differential}\partial_n^\chi\sigma^\chi_X=\sum_{v\in X}\langle X_v\mid X\rangle b_{v,X}\sigma^\chi_{X_v}\end{equation}
where we are denoting $X_v$ the clique obtained from $X$ by removing $v$ and
\begin{equation}\label{eq:coefficient}b_{v,X}:=(t^{m_v}-1)
\prod_{{\tiny{\array{c}w\in X_v\\e=\{v,w\}\in E_\Gamma\endarray}}}q_{\tilde{\ell}_e}(t^{m_e}).
\end{equation}

In particular, if $\tilde{\ell}_e=1$, then $q_{\tilde{\ell}_e}(t^{m_e})=1$ and if $m_e=0$, $q_{\tilde{\ell}_e}(t^{m_e})=\tilde{\ell}_e$.
Recall that an edge $e\in E_\Gamma$ is called $p$-dead if  $m_e=0$ and $p\mid \tilde{\ell}_e$; otherwise will be called {\sl $p$-living}.
So we see that in~\eqref{eq:differential}, the coefficient $b_{v,X}$ vanishes if either $v$ is dead or belongs to a $p$-dead edge in $X$.

Let $I$ be the augmentation ideal of the ring $R=\FF[t^{\pm 1}]$, i.e., the kernel of the augmentation map $R\to \FF$ with $t\mapsto 1$, 
$R$ is a principal ideal domain and $I$ is the ideal generated by $t-1$. Since $I$ is a prime ideal, we can localize and get a new ring $R_I$. 
We can also localize the complex $C_n(\Sal^\chi_\Gamma)$ and get a new complex $C_n(\Sal^\chi_\Gamma)_I$ with $n$-term
$$C_n(\Sal^\chi_\Gamma)_I=R_I\otimes_\FF \bar{C}(\hat\Gamma)_{n-1}$$ 
whose differential we also denote by $\partial_n^\chi$. Since localizing is flat, the $R$-free part of the homology of 
$(R\otimes_\FF \bar{C}(\hat\Gamma)_{n-1},\partial_n^\chi)$ has the same rank as the $R_I$-free part of the homology 
of $(R_I\otimes_\FF \bar{C}(\hat\Gamma)_{n-1},\partial_n^\chi)$. 
But in this complex we can normalize over the living vertices and $p$-living edges in the following way. 

Let $X\subseteq\Gamma$ be a clique and put
$$a_X=\prod_{v\in X\textrm{ living}}(t^{m_v}-1)\prod_{e\in E_X\textrm{ $p$-living}}q_{\tilde{\ell}_e(t^{m_e})}.$$
Let $\mu_X$ be the multiplicity of $t-1$ as a factor of $a_X$. Then 
$$a_X=(t-1)^{\mu_X}h_X$$
where $h_X$ is a unit in our ring $R_I$.
Observe that for any $v\in X$ we have
\begin{equation}\label{eq:Xv}(t-1)^{\mu_X}h_X=a_X=b_{v,X}a_{X_v}=b_{v,X}(t-1)^{\mu_{X_v}}h_{X_v}.\end{equation}

We can choose an integer $k$ such that $k|X|\geq\mu_X$ for any $X\subseteq\Gamma$ clique. Let $X\subseteq\Gamma$ be a clique and set

$$\tilde{\sigma}_X:=(t-1)^{k|X|-\mu_X}\frac{1}{h_X}\sigma_X.$$
Then 
$$\partial_n^\chi(\tilde{\sigma}_X)=(t-1)^{k|X|-\mu_X}\frac{1}{h_X}
\partial_n^\chi(\sigma_X)=(t-1)^{k|X|-\mu_X}\frac{1}{h_X}\sum_{v\in X}\langle X_v\mid X\rangle b_{v,X}\sigma^\chi_{X_v}.$$

Recall that the summand associated to each $v\in X$ vanishes if either $v$ is dead or it belongs to a $p$-dead edge in $X$.
Otherwise, using~\eqref{eq:Xv} we see that that summand is, up to a sign,
$$(t-1)^{k|X|-\mu_{X}}\frac{b_{v,X}}{h_X}\sigma^\chi_{X_v}=(t-1)^{k|X|-\mu_{X_v}}\frac{1}{h_{X_v}}
\sigma^\chi_{X_v}=(t-1)^k(t-1)^{k|X_v|-\mu_{X_v}}\frac{1}{h_{X_v}}\sigma^\chi_{X_v}={\tilde\sigma}^\chi_{X_v}.$$

Hence, if we denote by $\mathcal{F}^\chi_p(X)$ the subgraph that we get from $X$ when we remove dead vertices and {\sl closed} $p$-dead edges we have
\begin{equation}
\label{eq:partial}
\partial_n^\chi(\tilde{\sigma}_X)=
(t-1)^k\sum_{v\in\mathcal{F}^\chi_p(X)}\langle X_v\mid X\rangle\tilde{\sigma}_{X_v}.
\end{equation}
Observe that $\mathcal{F}^\chi_p(X)\subseteq X\cap\LL_p^\chi$ where  $\LL^\chi_p$ is the $p$-living subgraph  defined in the 
introduction, i.e.,  the subgraph of $\Gamma$ that we get when we remove dead vertices and {\sl open} $p$-dead edges. 

Now, for each $n$ let $\tilde C_n(\Sal^\chi_\Gamma)_I$ be the sub $R_I$-module of $C_n(\Sal^\chi_\Gamma)_I$ generated by the $\tilde\sigma_X$, $|X|=n$. The computations above imply that $(\tilde C_n(\Sal^\chi_\Gamma)_I,\partial_n^\chi)$ is a subcomplex of  $(C_n(\Sal^\chi_\Gamma)_I,\partial_n^\chi)$ and by definition each quotient $C_n(\Sal^\chi_\Gamma)_I/\tilde C_n(\Sal^\chi_\Gamma)_I$ is a  $R_I$-torsion module. Using the long exact homology sequence we see that the $R_I$-free part of the homology of $(C_n(\Sal^\chi_\Gamma)_I,\partial_n^\chi)$ equals the $R_I$-free part of the homology of $(\tilde C_n(\Sal^\chi_\Gamma)_I,\partial_n^\chi)$. So from now on we consider this last complex. 

We define a new map $d_n^\chi:\tilde C_n(\Sal^\chi_\Gamma)_I\to\tilde C_{n-1}(\Sal^\chi_\Gamma)_I$ by
$$d_n^\chi={\frac{1}{(t-1)^k}}\partial_n^\chi.$$

\begin{lem}\label{lem:tecd} With the notation above we have
\begin{enumerate}[label*=$\roman*)$]
\item
\label{lem:tecd-1} 
$\ker\partial_n^\chi=\ker d_n^\chi$,

\item
\label{lem:tecd-2} 
$\im\partial^\chi_n\subseteq\im d^\chi_n$,

\item
\label{lem:tecd-3} 
$\im d_n^\chi\cap I^k\tilde C_n(\Sal^\chi_\Gamma)_I=\im\partial_n^\chi,$

\item
\label{lem:tecd-4}
$\dim_\FF(\im d_n^\chi/\im\partial_n^\chi)<\infty$.
\end{enumerate}
\end{lem}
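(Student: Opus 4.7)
The plan is to exploit the fact that, in the modified basis $\{\tilde\sigma_X\}$, the twisted differential $d_n^\chi$ has all its matrix entries in $\FF\subset R_I$ (indeed in $\{0,\pm 1\}$). First I would note that $\tilde C_n(\Sal^\chi_\Gamma)_I$ is $R_I$-free on $\{\tilde\sigma_X\mid |X|=n\}$, because each $\tilde\sigma_X$ is a nonzero $R_I$-multiple of the original basis element $\sigma_X$ of $C_n(\Sal^\chi_\Gamma)_I$ and $R_I$ is a domain. Let $V_n\subset \tilde C_n(\Sal^\chi_\Gamma)_I$ denote the $\FF$-span of this basis. Formula~\eqref{eq:partial} then says precisely that $d_n^\chi$ is the $R_I$-linear extension of an $\FF$-linear map $\bar d_n\colon V_n\to V_{n-1}$. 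In particular, setting $W:=\bar d_n(V_n)\subseteq V_{n-1}$, we have $\im d_n^\chi = R_I\cdot W$, and choosing any $\FF$-linear complement $W'$ of $W$ inside the finite-dimensional $\FF$-vector space $V_{n-1}$ yields the $R_I$-module decomposition
$$\tilde C_{n-1}(\Sal^\chi_\Gamma)_I=(R_I\otimes_\FF W)\oplus(R_I\otimes_\FF W'),\qquad\im d_n^\chi=R_I\otimes_\FF W.$$

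With this setup in hand, the four items become essentially formal. Part~\ref{lem:tecd-1} follows from $\partial_n^\chi=(t-1)^k d_n^\chi$ together with the fact that $(t-1)^k$ is a nonzero divisor on the free $R_I$-module $\tilde C_{n-1}(\Sal^\chi_\Gamma)_I$. For part~\ref{lem:tecd-2}, the same identity rewrites $\partial_n^\chi\xi=d_n^\chi((t-1)^k\xi)\in\im d_n^\chi$. For part~\ref{lem:tecd-3}, since $I^k\tilde C_{n-1}(\Sal^\chi_\Gamma)_I=(t-1)^k(R_I\otimes_\FF W)\oplus (t-1)^k(R_I\otimes_\FF W')$, the decomposition above gives
$$\im d_n^\chi\cap I^k\tilde C_{n-1}(\Sal^\chi_\Gamma)_I=(R_I\otimes_\FF W)\cap(t-1)^k\tilde C_{n-1}(\Sal^\chi_\Gamma)_I=(t-1)^k(R_I\otimes_\FF W)=\im\partial_n^\chi,$$
where the last equality uses $\im\partial_n^\chi=(t-1)^k\im d_n^\chi$. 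Finally, for part~\ref{lem:tecd-4},
$$\im d_n^\chi/\im\partial_n^\chi=(R_I\otimes_\FF W)/(t-1)^k(R_I\otimes_\FF W)\cong (R_I/(t-1)^kR_I)\otimes_\FF W,$$
which is finite-dimensional over $\FF$ because $R_I/(t-1)^kR_I$ is a $k$-dimensional $\FF$-vector space (as $R_I$ is a DVR with uniformizer $t-1$ and residue field $\FF$) and $W$ is a finite-dimensional subspace of $V_{n-1}$.

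The only genuinely substantive point is the opening observation in the first paragraph: one must check carefully that the right-hand side of~\eqref{eq:partial} expresses $d_n^\chi(\tilde\sigma_X)$ as an $\FF$-linear, and not merely $R_I$-linear, combination of the $\tilde\sigma_{X_v}$. Once this is secured, everything else is elementary linear algebra over the principal ideal domain $R_I$, exploiting that all the relevant submodules split off as $R_I$-module summands compatibly with the filtration by powers of $(t-1)$.
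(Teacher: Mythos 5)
Your proposal is correct, and the key idea is the same as the paper's: formula~\eqref{eq:partial} shows that in the basis $\{\tilde\sigma_X\}$ the map $d_n^\chi$ has coefficients in $\FF$ (indeed in $\{0,\pm1\}$), and everything else rests on $\partial_n^\chi=(t-1)^kd_n^\chi$. Where you differ is in making the consequence of that observation explicit, via the splitting $\tilde C_{n-1}(\Sal^\chi_\Gamma)_I=(R_I\otimes_\FF W)\oplus(R_I\otimes_\FF W')$ with $W=\bar d_n(V_n)$. This is worth pointing out because the paper's proof of part~\ref{lem:tecd-3} is terser and, read literally, slightly imprecise: it asserts that $a=d_n^\chi(b)\in I^k\tilde C_{n-1}(\Sal^\chi_\Gamma)_I$ ``implies that also $b\in I^k\tilde C_n(\Sal^\chi_\Gamma)_I$,'' which is not true for an arbitrary preimage $b$ (one may add anything in $\ker d_n^\chi$). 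What is true is that one can \emph{choose} such a $b$, and that choice is exactly what your direct-sum decomposition provides for free: the component of $b$ in $R_I\otimes K$ (for $K$ a complement of $\ker\bar d_n$ in $V_n$) is forced to lie in $(t-1)^k(R_I\otimes K)$, and you may discard the $\ker\bar d_n$ component. Your version of~\ref{lem:tecd-4} is likewise a direct computation, whereas the paper deduces it from~\ref{lem:tecd-3} by observing that $\im d_n^\chi/\im\partial_n^\chi$ embeds in $\tilde C_{n-1}(\Sal^\chi_\Gamma)_I/I^k\tilde C_{n-1}(\Sal^\chi_\Gamma)_I$; both are fine. In short: same approach, but your explicit decomposition tidies up the one step the paper elides.
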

\begin{proof} 
\ref{lem:tecd-1} is obvious. For~\ref{lem:tecd-2}, take $a\in\im\partial_n^\chi$. 
Then $a=(t-1)a_1$ and $a=\partial^\chi_n(b)$ so $a_1=d_n^\chi(b)\in\im d_n^\chi$ so $a=(t-1)a_1\in\im d_n^\chi$. 
For~\ref{lem:tecd-3}, the fact that $\im\partial_n^\chi\subseteq\im d_n^\chi\cap I^k\tilde C_n(\Sal^\chi_\Gamma)_I$ 
is obvious because of~\ref{lem:tecd-2}. Conversely, take $a\in\im d_n^\chi\cap I^k\tilde C_n(\Sal^\chi_\Gamma)_I$. 
Then $a=d_n^\chi(b)$ and the fact that $a\in I^k\tilde C_n(\Sal^\chi_\Gamma)_I$ together with the definition of 
$d_n^\chi$ implies that also $b\in I^k\tilde C_n(\Sal^\chi_\Gamma)_I$ so $b=(t-1)^kb_1$ and 
$\partial_n^\chi(b_1)={\frac{1}{(t-1)^k}}\partial_n^\chi(b)=d_n^\chi(b)=a$ thus $a\in\im\partial_n^\chi$. Finally,~\ref{lem:tecd-4} follows from~\ref{lem:tecd-3}.
\end{proof}

\begin{prop} 
For each $n$, $\dim_\FF\H_n(A_\Gamma^\chi,\FF)<\infty$ if and only if the $n$-th homology of the localized chain complex 
$I^k\tilde C_n(\Sal^\chi_\Gamma)_I$ respect to $d_\bullet^\chi$ has finite $\FF$-dimension, i.e., if and only if 
$\dim_\FF\ker d_n^\chi/\im d_{n+1}^\chi<\infty$. 
\end{prop}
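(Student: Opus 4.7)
The plan is to reduce the claim via three successive comparisons, each preserving finite-dimensionality over $\FF$: localize at $I$, pass from $C_{\bullet,I}$ to the subcomplex $\tilde C_{\bullet,I}$, and replace $\partial^\chi$ with $d^\chi$.

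First I would localize. Because $\hat\Gamma$ is finite, $M:=\H_n(A_\Gamma^\chi,\FF)$ is finitely generated over $R=\FF[t^{\pm 1}]$. Writing $M\cong R^r\oplus T$ with $T$ torsion via the structure theorem over the PID $R$, the torsion piece is finite dimensional over $\FF$, so $\dim_\FF M<\infty$ iff $r=0$. The same decomposition after localization gives $M_I\cong R_I^r\oplus T_I$ with $T_I$ finite dimensional over $\FF$, since $R_I$ is a DVR with residue field $\FF$ and only the $(t-1)$-primary part of $T$ survives. By exactness of localization $M_I=\H_n(C_{\bullet,I},\partial^\chi)$, so finite-dimensionality over $\FF$ of $\H_n(A_\Gamma^\chi,\FF)$ is equivalent to that of $\H_n(C_{\bullet,I},\partial^\chi)$.

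Next I would pass from $C_{\bullet,I}$ to $\tilde C_{\bullet,I}$. By construction each $C_{n,I}/\tilde C_{n,I}$ is a finite direct sum of cyclic $R_I$-modules of the form $R_I/(t-1)^{k|X|-\mu_X}$, each of which is finite dimensional over $\FF$. Consequently the homology of the quotient complex is finite dimensional in every degree, and the long exact sequence of
\[
0\to\tilde C_{\bullet,I}\to C_{\bullet,I}\to C_{\bullet,I}/\tilde C_{\bullet,I}\to 0
\]
shows that $\H_n(\tilde C_{\bullet,I},\partial^\chi)$ and $\H_n(C_{\bullet,I},\partial^\chi)$ are finite dimensional over $\FF$ simultaneously.

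Finally, Lemma~\ref{lem:tecd} lets me swap $\partial^\chi$ for $d^\chi$: parts~\ref{lem:tecd-1} and~\ref{lem:tecd-2} give $\ker\partial^\chi_n=\ker d^\chi_n$ and $\im\partial^\chi_{n+1}\subseteq\im d^\chi_{n+1}$, while~\ref{lem:tecd-4} says $\dim_\FF(\im d^\chi_{n+1}/\im\partial^\chi_{n+1})<\infty$. The natural surjection
\[
\H_n(\tilde C_{\bullet,I},\partial^\chi)=\ker d^\chi_n/\im\partial^\chi_{n+1}\twoheadrightarrow\ker d^\chi_n/\im d^\chi_{n+1}
\]
therefore has finite dimensional kernel, so source and target are finite dimensional over $\FF$ simultaneously. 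Concatenating the three equivalences yields the statement. The main obstacle is the first step, where one must carefully separate $R$-torsion from $\FF$-finite-dimensionality and verify that localization at the augmentation ideal transports the latter faithfully in both directions; the subsequent steps are essentially immediate applications of Lemma~\ref{lem:tecd} and standard long exact sequence arguments.
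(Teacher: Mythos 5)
Your proof is correct and follows essentially the same route as the paper: localize at $I$, pass to the subcomplex $\tilde C_{\bullet,I}$, and compare $\partial^\chi$ with $d^\chi$ via Lemma~\ref{lem:tecd} and a short exact sequence. The only difference is expository: the paper carries out your first two reductions in the prose preceding the proposition (phrased as preservation of $R$- and $R_I$-free rank, tacitly using that finite $\FF$-dimension is equivalent to zero free rank for finitely generated modules over $R$ or $R_I$), so its stated proof consists only of your third step, while you spell out the free-rank/finite-dimension translation explicitly.
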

\begin{proof} Note that for each $n$ there is a short exact sequence
$$0\to \im d_{n+1}^\chi/\im\partial_{n+1}^\chi\to 
\ker\partial_n^\chi/\im\partial_{n+1}^\chi\to \ker\partial_n^\chi/\im d_{n+1}^\chi\to 0.$$
Since the left-hand side is of finite $\FF$-dimension by Lemma~\ref{lem:tecd}\ref{lem:tecd-4} and 
$\ker\partial_n^\chi=\ker d_n^\chi$ by Lemma~\ref{lem:tecd}\ref{lem:tecd-1}, the result follows. 
\end{proof}

\begin{prop}\label{freehomology}
$$\ker d_n^\chi/\im d_n^\chi=R_I\otimes_\FF\bigoplus_{A_X\in\mathcal{B}_p^\chi,|X|\leq n}
\overline{\H}_{n-1-|X|}(\hat\lk_{\LL^\chi_p}(X)).$$
\end{prop}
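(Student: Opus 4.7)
The plan is to use the explicit differential~\eqref{eq:partial} together with a combinatorial splitting of the cliques of $\Gamma$ to decompose $(\tilde C_\bullet(\Sal^\chi_\Gamma)_I, d^\chi_\bullet)$ as a direct sum of shifted augmented simplicial chain complexes, one for each $A_Y\in\mathcal{B}^\chi_p$.

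To each clique $X\subseteq\Gamma$ I would assign the join decomposition $X=Y(X)\star Z(X)$, where
$$Y(X):=\bigl\{v\in X \,\bigm|\, v\ \text{is dead, or}\ v\ \text{lies on some}\ p\text{-dead edge contained in}\ X\bigr\}$$
and $Z(X):=X\setminus Y(X)=\mathcal{F}^\chi_p(X)$. Two easy observations identify this splitting geometrically. First, if $\{v,w\}\subseteq X$ is $p$-dead then both endpoints lie in $Y(X)$, so every $p$-dead edge of $X$ is in fact contained in $Y(X)$, whence $A_{Y(X)}\in\mathcal{B}^\chi_p$. Second, any edge inside $Z(X)$ or joining $Z(X)$ to $Y(X)$ must lie in $\LL^\chi_p$, for otherwise its $Z(X)$-endpoint would sit on a $p$-dead edge of $X$ and hence belong to $Y(X)$; thus $Z(X)$ is a clique of $\hat{\lk}_{\LL^\chi_p}(Y(X))$. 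Conversely, given $A_Y\in\mathcal{B}^\chi_p$ and a (possibly empty) clique $Z$ of $\hat{\lk}_{\LL^\chi_p}(Y)$, the clique $X:=Y\star Z$ of $\Gamma$ recovers $Y(X)=Y$ and $Z(X)=Z$, setting up a bijection between cliques of $\Gamma$ and such pairs.

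Next I would verify that $d^\chi_\bullet$ respects this splitting. By~\eqref{eq:partial} the differential of $\tilde\sigma_X$ involves only vertices $v\in\mathcal{F}^\chi_p(X)=Z(X)$, and removing such a $v$ alters neither the dead status of another vertex nor the $p$-deadness of another edge; hence $Y(X_v)=Y(X)$ and $Z(X_v)=Z(X)\setminus\{v\}$. This yields a chain-complex decomposition
$$\tilde C_\bullet(\Sal^\chi_\Gamma)_I \;=\; \bigoplus_{A_Y\in\mathcal{B}^\chi_p} \tilde C_\bullet(Y),$$
where $\tilde C_\bullet(Y)$ is the $d^\chi_\bullet$-stable $R_I$-submodule spanned by the $\tilde\sigma_{Y\star Z}$ as $Z$ runs over the cliques of $\hat{\lk}_{\LL^\chi_p}(Y)$ (including $Z=\emptyset$).

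Finally, the assignment $\tilde\sigma_{Y\star Z}\mapsto Z$, with $Z=\emptyset$ corresponding to the empty simplex, induces an isomorphism
$$\tilde C_\bullet(Y) \;\cong\; R_I\otimes_\FF \bar C_{\bullet-|Y|}\bigl(\hat{\lk}_{\LL^\chi_p}(Y)\bigr),$$
since the incidence coefficients $\langle X_v\mid X\rangle$ for $v\in Z$ appearing in~\eqref{eq:partial} are precisely the simplicial boundary signs of the face $Z\setminus\{v\}$ of $Z$. Passing to homology and using flatness of $R_I$ over $\FF$ together with the identity $\H_j(\bar C_\bullet(K))=\overline{\H}_{j-1}(K)$ gives
$$\H_n\bigl(\tilde C_\bullet(Y)\bigr) \;\cong\; R_I\otimes_\FF \overline{\H}_{n-1-|Y|}\bigl(\hat{\lk}_{\LL^\chi_p}(Y)\bigr),$$
and summing over $A_Y\in\mathcal{B}^\chi_p$ produces the claimed formula; the restriction $|Y|\leq n$ is automatic since reduced homology in degrees below $-1$ vanishes. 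The main substantive step is the combinatorial splitting together with the verification that $d^\chi_\bullet$ preserves it; once that is in hand, the rest reduces to a standard bookkeeping with augmented simplicial chain complexes.
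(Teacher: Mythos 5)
Your decomposition is exactly the one in the paper, and the forward direction (from a clique $X$ to the pair $(Y(X),Z(X))$) is handled correctly. However, there is a real gap in the converse direction of the bijection, which you state as ``the clique $X:=Y\star Z$ of $\Gamma$ recovers $Y(X)=Y$ and $Z(X)=Z$'' without proof. This is not automatic. A vertex $v\in Z$ certainly is not dead (since $Z\subseteq\LL^\chi_p$), but it could a priori lie on a $p$-dead edge $e=(v,w)$ of $X$ with $w\in Y$: the definition of $\lk_{\LL^\chi_p}(Y)$ only requires the vertices of $Z$ to be adjacent to $Y$ \emph{in $\Gamma$}, and says nothing about the status of the connecting edges. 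If such a $p$-dead edge existed, $v$ would belong to $Y(X)$, so $Y(X)\supsetneq Y$, the injection $\{X : Y(X)=Y\}\hookrightarrow\{\text{cliques of }\lk_{\LL^\chi_p}(Y)\}$ would fail to be surjective, and your identification of $\tilde C_\bullet(Y)$ with $R_I\otimes_\FF\bar C_{\bullet-|Y|}(\hat{\lk}_{\LL^\chi_p}(Y))$ would only yield a proper subcomplex, breaking the homology computation.

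This is precisely where the $\FC$-hypothesis enters, and the paper devotes a paragraph to it: if $e=(v,w)$ were $p$-dead, then $m_w=-m_v\neq 0$ since $v$ is living, so $w\in Y$ is not dead; as $A_Y\in\mathcal{B}^\chi_p$, the vertex $w$ must lie on a $p$-dead edge $e_1=(w,u)$ with $u\in Y$. Then $v,w,u$ span a triangle in the clique $X$ with two edges ($e$ and $e_1$) of label $>2$, which contradicts the $\FC$-condition for even Artin groups. Your argument needs this triangle argument inserted; everything else (the stability of $Y(X)$ and $Z(X)$ under removal of a vertex $v\in Z(X)$, the identification of the incidence numbers with simplicial boundary signs, the passage to homology using flatness of $R_I$) matches the paper and is fine.
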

\begin{proof} From~\eqref{eq:partial} we have
$$d_n^\chi(\tilde{\sigma}_X)=\sum_{v\in \mathcal{F}^\chi_p(X)}\langle X_v\mid X\rangle\tilde{\sigma}_{X_v}.$$

Let $\emptyset\neq X\subseteq\Gamma$ be a clique. Let $\mathcal{B}_p^\chi(X)=Y$ be the subgraph of $X$ 
generated by dead vertices and closed $p$-dead edges and $Z=\mathcal{F}^\chi_p(X)$. Then any vertex of 
$X$ lies either in $Y$ or in $Z$, in other words, $X$ is the subgraph generated by $Y\cup Z$. 
Note that $Z\subseteq\lk_{\LL^\chi_p}(Y)$ is a clique and $A_Y\in\mathcal{B}_p^\chi$, obviously $Y$ is 
the biggest subgraph of $X$ satisfying this.

Conversely, given $A_Y\in\mathcal{B}_p^\chi$ and a clique $Z\subseteq\lk_{\LL^\chi_p}(Y)$, 
then the subgraph $X$ of $\Gamma$ generated by $Y\cup Z$ is a clique. We claim that $Y=\mathcal{B}_p^\chi(X)$, 
obviously $Y\subseteq\mathcal{B}_p^\chi(X)$. If there is some $v\in\mathcal{B}_p^\chi(X)$, $v\not\in Y$, then $v\in Z$ so 
it can not be dead and there must be some $p$-dead edge $e\in X$ with $e=(v,w)$. As $Z$ is a clique we cannot have $w\in Z$ 
so $w\in Y$. Then $0=m_e=m_v+m_w$ so $m_w\neq 0$, in other words, $w$ is not a dead vertex and as $A_Y\in\mathcal{B}_p^\chi$, 
we deduce that there must be some $p$-dead edge $e_1\in Y$ with $e_1=(w,u)$ for some other $u\in Y$. But then observe that the 
vertices $v,u,w$ from a triangle in $\Gamma$ and the fact that both $e$ and $e_1$ are $p$ dead implies that both have labels 
bigger than 2 which contradicts the $\FC$-condition. Moreover we also deduce that $Z=\mathcal{F}^\chi_p(X)$.

We will check that for each $A_Y\in\mathcal{B}_p^\chi$ there is a subcomplex $(D_Y)_\bullet$ of 
$(R_I\otimes_\FF \bar{C}(\hat\Gamma)_\bullet,d_\bullet^\chi)$ so that, as complexes,
$$\tilde C_\bullet(\Sal^\chi_\Gamma)_I=\bigoplus_{A_Y\in\mathcal{B}_p^\chi}(D_Y)_\bullet.$$
To see it, let $(D_Y)_k=0$ for $0\leq k\leq |Y|-1$ and for $k\geq|Y|$,
$$(D_Y)_k=\oplus\{R_I\tilde\sigma_X\mid |X|=k, X\subseteq\Gamma\text{ clique}, 
Y= \mathcal{B}_p^\chi(X)\}.$$
The fact that this is a $d_\bullet^\chi$-subcomplex follows from the fact that  for $\tilde{\sigma}_X\in (D_Y)_n$, $d_n^\chi(\tilde{\sigma}_X)$
vanishes in all the summands 
not in $(D_Y)_{n-1}$, more explicitly:
$$d_n^\chi(\tilde{\sigma}_X)=\sum_{v\in \mathcal{F}^\chi_p(X)}\langle X_v\mid X\rangle\tilde{\sigma}_{X_v}$$ 
and as $v\in \mathcal{F}^\chi_p(X)$, $\tilde{\sigma}_{X_v}\in (D_Y)_{n-1}$.

Moreover, the discussion above implies that we can identify
$$(D_Y)_k=R_I\otimes\overline{C}_{k-|Y|-1}(\hat\lk_{\LL^\chi_p}(Y))$$
and the fact that each $X$ determines uniquely $Y=\mathcal{B}_p^\chi(X)$ implies that
$$R_I\otimes_\FF \bar{C}(\hat\Gamma)_\bullet=\bigoplus_{A_Y\in\mathcal{B}_p^\chi}R_I\otimes\overline{C}_{\bullet+1+|Y|}(\hat\lk_{\LL^\chi_p}(Y)).$$
Therefore the result follows.
\end{proof}

As a consequence, we obtain the following result.

\begin{teo}\label{teo:free} Let $G=A_\Gamma$ be an even Artin group of $\FC$-type, $\chi:G\to\ZZ$ a discrete character with 
kernel $A_\Gamma^\chi$ and $\FF$ a field of characteristic $p$. Then the free part of the homology groups $\H_n(A_\Gamma^\chi,\FF)$ 
seen as $\FF[t^{\pm1}]$-modules has rank 
$$\sum_{A_X\in\mathcal{B}_p^\chi,|X|\leq n}
\dim_\FF\overline{\H}_{n-1-|X|}(\hat\lk_{\LL^\chi_p}(X),\FF).$$
\end{teo}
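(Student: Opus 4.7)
Plan: The preceding results do essentially all the work; the theorem amounts to converting the free $R_I$-module computed in Proposition~\ref{freehomology} (where $R=\FF[t^{\pm 1}]$ and $R_I$ is its localization at the augmentation ideal $I=(t-1)$) into the free $R$-rank of $\H_n(A_\Gamma^\chi,\FF)$.

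First, since the Salvetti chain complex $C_\bullet(\Sal^\chi_\Gamma)=R\otimes_\FF\overline{C}(\hat\Gamma)_{\bullet-1}$ is finitely generated and free over the Noetherian PID $R$, the homology $\H_n(A_\Gamma^\chi,\FF)$ is a finitely generated $R$-module, and by the structure theorem its free $R$-rank agrees with the free $R_I$-rank of its localization $\H_n(A_\Gamma^\chi,\FF)_I$. By flatness of localization, this is the homology of $(C_\bullet(\Sal^\chi_\Gamma)_I,\partial^\chi)$. Moreover, since $\tilde C_\bullet\hookrightarrow C_\bullet(\Sal^\chi_\Gamma)_I$ has $R_I$-torsion cokernel (each $\tilde\sigma_X$ is a unit-in-$R_I$ multiple of $(t-1)^{k|X|-\mu_X}\sigma_X$, as already noted before Proposition~\ref{freehomology}), the long exact homology sequence forces the $R_I$-free parts of $\H_n(C_\bullet(\Sal^\chi_\Gamma)_I,\partial^\chi)$ and $\H_n(\tilde C_\bullet,\partial^\chi)$ to coincide.

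Next, to bridge from $\partial^\chi$ to $d^\chi$ on $\tilde C_\bullet$, Lemma~\ref{lem:tecd}\ref{lem:tecd-1}--\ref{lem:tecd-2} yield the natural short exact sequence
$$0\to\im d^\chi_{n+1}/\im\partial^\chi_{n+1}\to\H_n(\tilde C_\bullet,\partial^\chi)\to\H_n(\tilde C_\bullet,d^\chi)\to 0,$$
whose leftmost term is finite-dimensional over $\FF$ by Lemma~\ref{lem:tecd}\ref{lem:tecd-4}. As a finitely generated $R_I$-module of finite $\FF$-dimension is necessarily $R_I$-torsion (a free summand would already be infinite-dimensional over $\FF$), the outer terms have the same $R_I$-free rank. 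Combining with Proposition~\ref{freehomology}, which identifies $\H_n(\tilde C_\bullet,d^\chi)$ with the free $R_I$-module $R_I\otimes_\FF V_n$ where $V_n=\bigoplus_{A_X\in\mathcal B^\chi_p,\,|X|\leq n}\overline{\H}_{n-1-|X|}(\hat{\lk}_{\LL^\chi_p}(X),\FF)$, we conclude that the free $R$-rank of $\H_n(A_\Gamma^\chi,\FF)$ equals $\dim_\FF V_n$, which is the claimed sum.

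No single step is the main obstacle; the proof is essentially organizational, stacking three facts already established above. The only point requiring a little care is verifying that ``taking the free part'' commutes with localization at $I$, which is routine because every module in sight is finitely generated over the PID~$R$.
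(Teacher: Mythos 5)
Your proposal is correct and follows exactly the path the paper lays out. The paper does not write a self-contained proof of Theorem~\ref{teo:free} --- it appears after Proposition~\ref{freehomology} with the phrase ``So we get'' --- and the actual argument is distributed across the text preceding the proposition: the observation that localizing at $I$ is flat and hence preserves the free rank of the homology, the observation that $\tilde C_\bullet\hookrightarrow C_\bullet(\Sal^\chi_\Gamma)_I$ has $R_I$-torsion cokernel so the free ranks of their homologies agree, Lemma~\ref{lem:tecd} to pass from $\partial^\chi$ to $d^\chi$ at the cost of a finite-dimensional (hence torsion) discrepancy, and finally Proposition~\ref{freehomology} to identify $\H_n(\tilde C_\bullet,d^\chi)$ as a free $R_I$-module of the stated rank. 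You have reassembled precisely that chain of reductions, including the short exact sequence
$$0\to\im d^\chi_{n+1}/\im\partial^\chi_{n+1}\to\H_n(\tilde C_\bullet,\partial^\chi)\to\H_n(\tilde C_\bullet,d^\chi)\to 0,$$
which is the intended content of Lemma~\ref{lem:tecd}. This is the same proof, made explicit.
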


Therefore,
\begin{corol} 
Let $G=A_\Gamma$ be an even Artin group of $\FC$-type, $\chi:G\to\RR$ a character and 
$\FF$ a field of characteristic $p$. Then 
$\dim_\FF\H_i(A^\chi_\Gamma,\FF)<\infty$ for $0\leq i\leq n$ if and only if $\chi$ satisfies the strong $p$-$n$-link condition. 
\end{corol}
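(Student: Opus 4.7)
The plan is to reduce the corollary to Theorem~\ref{teo:free} via the structure theorem for finitely generated modules over the principal ideal domain $R=\FF[t^{\pm1}]$. First I would observe that, since $A_\Gamma$ is an even Artin group of $\FC$-type, the Salvetti complex provides a finite $K(\pi,1)$ for $A_\Gamma$, and the associated chain complex $C_\bullet(\Sal^\chi_\Gamma)$ computing $\H_\bullet(A_\Gamma^\chi,\FF)$ is finitely generated in every degree as an $R$-module (as is already used implicitly in Section~\ref{sec:free}). It follows that each homology group $\H_n(A_\Gamma^\chi,\FF)$ is a finitely generated $R$-module; the case of a non-discrete $\chi:G\to\RR$ only enters through the combinatorial condition, which depends on $[\chi]$, so the argument reduces to the discrete case.

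Next, since $R$ is a PID, the structure theorem gives a decomposition
$$\H_n(A_\Gamma^\chi,\FF)\cong R^{r_n}\oplus T_n,$$
where $T_n$ is the torsion submodule. Each cyclic torsion summand $R/(f(t))$ has finite $\FF$-dimension, so $T_n$ is automatically finite dimensional over $\FF$; on the other hand $R$ itself is infinite dimensional over $\FF$. Therefore $\dim_\FF\H_n(A_\Gamma^\chi,\FF)<\infty$ if and only if the free rank $r_n$ vanishes.

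I would then invoke Theorem~\ref{teo:free}, which identifies this free rank as
$$r_n=\sum_{A_X\in\mathcal{B}^\chi_p,\,|X|\leq n}\dim_\FF\overline{\H}_{n-1-|X|}\bigl(\hat\lk_{\LL^\chi_p}(X),\FF\bigr).$$
Being a sum of non-negative integers, $r_n=0$ is equivalent to the vanishing of every single summand. Requiring $r_i=0$ for all $0\leq i\leq n$ therefore translates, after the reindexing $j=i-1-|X|$, into the statement that for every $A_X\in\mathcal{B}^\chi_p$ with $|X|\leq n$ and every integer $j$ with $-1\leq j\leq n-1-|X|$, one has $\overline{\H}_j(\hat\lk_{\LL^\chi_p}(X),\FF)=0$; this is precisely the assertion that $\hat\lk_{\LL^\chi_p}(X)$ is $p$-$(n-1-|X|)$-acyclic, i.e.\ the strong $p$-$n$-link condition.

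Most of the content is already carried by Theorem~\ref{teo:free}, so the only step that requires care is the bookkeeping in the last paragraph: one must verify that the pairs $(X,j)$ produced on the free-rank side as $i$ ranges over $\{0,1,\dots,n\}$ match exactly the pairs appearing in the definition of the strong $p$-$n$-link condition, and in particular that the boundary cases (the $X=\emptyset$ contribution detecting whether $\LL^\chi_p$ is empty via $\overline{\H}_{-1}$, and the cliques $X$ with $|X|=n$ where the condition degenerates to $-1$-acyclicity) are correctly absorbed. This is routine, so I do not expect any genuine obstacle beyond Theorem~\ref{teo:free} itself.
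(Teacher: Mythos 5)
Your argument is exactly the implicit reasoning behind the paper's laconic ``Therefore'': Theorem~\ref{teo:free} computes the free rank of $\H_n(A^\chi_\Gamma,\FF)$ over the PID $R=\FF[t^{\pm1}]$, finite generation over $R$ means finite $\FF$-dimension is equivalent to vanishing free rank, and the reindexing $j=i-1-|X|$ translating vanishing of the ranks $r_0,\dots,r_n$ into the strong $p$-$n$-link condition is correct (including the $j=-1$ boundary cases detecting nonemptiness of the links).

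However, your dismissal of the non-discrete case is a genuine gap, not mere bookkeeping. If $\chi:G\to\RR$ is not discrete then $\chi(G)$ is free abelian of rank $\geq 2$, so $G/\ker\chi\cong\ZZ^r$ with $r\geq 2$, the homology $\H_n(A^\chi_\Gamma,\FF)$ is a module over $\FF[\ZZ^r]$ rather than over $\FF[t^{\pm1}]$, the structure theorem for PIDs is unavailable, and Theorem~\ref{teo:free} --- which explicitly hypothesizes $\chi:G\to\ZZ$ --- does not apply. Replacing $\chi$ by a discrete $\chi'$ with the same living subgraph does not help either: $\ker\chi$ and $\ker\chi'$ are genuinely different subgroups (of different ``corank'' in $G$), and there is no \emph{a priori} reason the finite-dimensionality of their homologies should coincide. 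Note that the closure argument used in the paper's proof of Corollary~\ref{corol:sigmachar} to pass from discrete to arbitrary characters is not available here, since the conclusion concerns a specific homology group rather than membership in a closed subset of $S(G)$. The statement of the corollary should almost certainly read $\chi:G\to\ZZ$, matching the opening declaration of Section~\ref{sec:free} that $\chi$ is assumed discrete throughout; if you want to keep $\chi:G\to\RR$ in the hypothesis you would need an additional argument, which your proposal does not supply.
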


In the particular case $p=0$, note that $\mathcal{B}_0^\chi$ is just the set of those $A_X\in\cP$ with $X\subseteq\Gamma\setminus\LL^\chi_0$.

We also deduce a partial converse to Theorem~\ref{teo:mainsigma}.

\begin{corol}\label{corol:sigmachar} 
Let $G=A_\Gamma$ be an even Artin group of $\FC$-type, and $0\neq\chi:G\to\RR$ be a character 
such that $\LL^\chi_p=\LL^\chi$ for some $p$ either zero or prime. Assume that the strong $p$-$n$-link condition 
fails for $\chi$. Then $[\chi]\not\in\Sigma^n(G,\ZZ)$.
\end{corol}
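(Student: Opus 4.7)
The plan is to reduce to the case where $\chi$ is discrete, where the previous Corollary combined with Lemma~\ref{lem:discrete} yields the conclusion directly, and then to transfer the result to a general real character via the closedness of $\Sigma^n(G,\ZZ)^c$ in $S(G)$.

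First I would dispose of the case $\chi:G\to\ZZ$ discrete. The failure of the strong $p$-$n$-link condition, together with the previous Corollary, produces an index $0\leq i\leq n$ for which $\dim_\FF\H_i(A_\Gamma^\chi,\FF)=\infty$ for $\FF$ of characteristic~$p$. This forbids $A_\Gamma^\chi$ from being of type $\FP_n$, hence by Lemma~\ref{lem:discrete} we obtain $[\chi]\notin\Sigma^n(G,\ZZ)$.

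For a general $\chi:G\to\RR$ I would approximate $[\chi]$ in $S(G)$ by classes of integer-valued characters $\chi_k$ that still fail the strong $p$-$n$-link condition. The natural candidate is the $\QQ$-linear subspace
\[
V_0:=\{\chi'\in\RR^{V_\Gamma}\mid \chi'(v)=0\text{ if }\chi(v)=0,\ \chi'(u)+\chi'(w)=0\text{ if }m_{\{u,w\}}=0\},
\]
which contains $\chi$ and is cut out by rational equations, so integer-valued characters are dense in it. For $\chi'\in V_0$ close enough to $\chi$ the open conditions ``$\chi'(v)\neq 0$ for each $\chi$-living $v$'' and ``$\chi'(u)+\chi'(w)\neq 0$ for each $\chi$-non-dead edge $\{u,w\}$'' persist, so $\chi'$ has exactly the same dead vertices and dead edges as $\chi$. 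Since $p$-deadness of an edge adds only the intrinsic condition $p\mid\tilde\ell_e$, we conclude $\LL^{\chi'}_p=\LL^\chi_p$ and $\mathcal{B}_p^{\chi'}=\mathcal{B}_p^\chi$; in particular the hypothesis $\LL^{\chi'}_p=\LL^{\chi'}$ is preserved and the strong $p$-$n$-link condition still fails for $\chi'$ with the same witnessing clique. Choosing an integer-valued sequence $\chi_k\in V_0$ with $[\chi_k]\to[\chi]$ in $S(G)$, the discrete case yields $[\chi_k]\notin\Sigma^n(G,\ZZ)$, and closedness of $\Sigma^n(G,\ZZ)^c$ forces $[\chi]\notin\Sigma^n(G,\ZZ)$.

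The only nontrivial point to verify is the stability of the link-condition failure under perturbation inside $V_0$: one must check that $\mathcal{B}_p^\chi$, the subgraph $\LL_p^\chi$, and each link $\hat\lk_{\LL_p^\chi}(\Delta)$ depend only on which vertex values $\chi(v)$ and which edge sums $m_e$ vanish, and that $V_0$ is designed precisely to keep these vanishing loci invariant. Once this is pinned down, the remainder is a standard density plus closed-complement argument in the sphere $S(G)$.
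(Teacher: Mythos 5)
Your proof is correct and follows the same two-step strategy as the paper: handle the discrete case via the homology computation (the previous Corollary plus Lemma~\ref{lem:discrete}), then deduce the general case by approximating $[\chi]$ with classes of integer characters having the same living subgraph and invoking closedness of $\Sigma^n(G,\ZZ)^c$. Your writeup is in fact somewhat more careful than the paper's, which merely asserts that $[\chi]$ lies in the closure of $\{[\varphi]\mid\varphi:G\to\ZZ,\ \LL^\varphi=\LL^\chi\}$ without spelling out the rational subspace $V_0$ and the persistence of the vanishing/non-vanishing loci under perturbation.
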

\begin{proof} Let $\chi$ be a character that does not satisfy the strong $p$-$n$-link condition. Assume first that $\chi$ is discrete, i.e., 
$\chi(G)\subseteq\ZZ$. Let $\FF$ be a field of characteristic $p$. By Proposition~\ref{freehomology} and the discussion 
above we deduce that some of the homology groups $\H_i(A_\Gamma^\chi,\FF)$ has infinite dimension as an $\FF$-vector space, 
thus $A_\Gamma^\chi$ is not of type $\FP_n$ thus $[\chi]\not\in\Sigma^n(G,\ZZ)$. For the general case, i.e., 
when $\chi:G\to\RR$ is not necessarily discrete, consider the set
$$\{[\varphi]\mid\varphi:G\to\ZZ,\LL^\varphi=\LL^\chi\}.$$
Observe that $[\chi]$ lies in the closure of this set. The discrete case considered above implies 
$$\{[\varphi]\mid\varphi:G\to\ZZ,\LL^\varphi=\LL^\chi_0\}\subseteq\Sigma^c(G,\ZZ)$$
and as $\Sigma^c(G,\ZZ)$ is closed we deduce that also $[\chi]\in\Sigma^c(G,\ZZ)$.
\end{proof} 

\begin{exam} Let $G=\D_{\ell}$ be the dihedral Artin group associated to a graph $\Gamma$ which consists of a single edge 
$e$ with vertices $v,w$ and label $\ell=2\tilde{\ell}$ and let $\chi:G\to\ZZ$ given by $\chi(v)=1$, $\chi(w)=-1$. Let $\FF$ be a 
field of characteristic $p$. In this case the homology groups $\H_n(A_\Gamma^\chi,\FF)$ vanish for $n>1$ and one can compute directly 
$\H_1(A_\Gamma^\chi,\FF)$ for a field $\FF$ using the description of the differential~\eqref{eq:differential} above and gets:
$$
H_1(A_\Gamma^\chi;\FF)=
\begin{cases}
\FF[t^{\pm 1}] & \text{ if } p|\tilde{\ell}\\
\frac{\FF[t^{\pm 1}]}{(t-1)} & \text{ otherwise.}
\end{cases}
$$
This is precisely what Theorem~\ref{teo:free} predicts: if $p\nmid\tilde{\ell}$, there are no $p$-dead edges and no dead vertices 
which means $\mathcal{B}_p^\chi=\{1\}$ and $\LL_p^\chi=\Gamma$. The link $\lk_{\LL^\chi_p}(\emptyset)$ is the whole $\Gamma$ so the 
associated flag complex is contractible and the associated reduced homology groups vanish. By contrast, if $p|\tilde{\ell}$, the 
edge $(v,w)$ is $p$-dead so $\mathcal{B}_p^\chi=\{1,e\}$ and  $\LL_p^\chi$ consists of 2 isolated points. According to 
Theorem~\ref{teo:free}, the free rank of $\H_1(A_\Gamma^\chi,\FF)$ is
$$\sum_{A_X\in\mathcal{B}_p^\chi,|X|\leq 1}\dim_\FF
\overline{\H}_{0-|X|}(\hat\lk_{\LL^\chi_p}(X),\FF)=\dim_\FF\overline{\H}_{0}(\hat\lk_{\LL^\chi_p}(\emptyset),\FF)=
\dim_\FF\overline{\H}_{0}(\hat\LL^\chi_p,\FF)=1.$$
\end{exam}

\begin{exam} 
Let $G=\D_4\times \D_6$ where $\D_4$ (resp. $\D_6$) is the dihedral Artin group associated to the edge with label 4 (resp. 6). 
Then $G=A_\Gamma$, where $\Gamma$ is a full graph with 4 vertices and two disjoint edges labeled with 4 and 6.  
Denote the standard generators of the factor $\D_4$ by $v$, $w$ and the standard generators of the factor $\D_6$ 
by $x$, $y$ and consider the character $\chi:G\to\ZZ$ induced by $\chi(v)=\chi(x)=1$, $\chi(w)=\chi(y)=-1$. 
Taking into account the computation of the Sigma-invariants for this type of groups that we performed in 
Subsection~\ref{subsec:productdihedral}, we see that $[\chi]\not\in\Sigma^2(G,\ZZ)$ so its kernel $A_\Gamma^\chi$ 
is not of type $\FP_2$. In fact $G$ does not satisfy the strong $2$-link condition. To see it, note that 
$\mathcal{B}^\chi=\{\emptyset,e_1,e_2\}$ where $e_1=(v,w)$ and $e_2=(x,y)$ and $\LL^\chi$ is a square with 
vertices $v,x,w,y$ (that we get when we remove the interior of $e_1$ and $e_2$ from $\Gamma$). 
For $X=\emptyset\in\mathcal{B}^\chi$ we have $\lk_{\LL^\chi}(\emptyset)=\LL^\chi$. Then
$$\overline{\H}_{2-1-|X|}(\hat{\lk}_{\LL^\chi}(X))=\overline{\H}_{1}(\hat{\LL^\chi})=\ZZ\neq 0.$$
It is easy to see that also the strong $3$-link condition fails: to check it consider for example $X=e_1$, 
its link in $\LL^\chi$ consists of the isolated vertices $x$ and $y$.

We claim however that this $\chi$ does satisfy the strong $p$-$n$-link condition for each $p$ (zero or a prime). 
As a consequence, for any field $\FF$,
$$\dim_\FF\H_2(A_\Gamma^\chi,\FF)<\infty.$$

Assume first that $p=2$. Then $\mathcal{B}_2^\chi=\{\emptyset,e_1\}$ and $\LL_2^\chi$ is the graph obtained from $\Gamma$ 
when we remove the open edge $e_1$. Then $\lk_{\LL_2^\chi}(\emptyset)=\LL_2^\chi$ and $\lk_{\LL_2^\chi}(e_1)=e_2$ and both associated flag complexes are contractible. 

The argument for $p=3$ is analogous. Finally, if $p\neq 2,3$, $\mathcal{B}_p^\chi=\{\emptyset\}$ and $\LL_p^\chi=\Gamma$. Again, the flag complex is contractible. 

\end{exam}

\begin{exam} Things are very different if we consider for example $G_1=\D_4\times \D_4$ and $\chi$ as before. 
Then one easily checks that the strong $2$-$2$-link condition fails so $\dim_\FF\H_2(A_\Gamma^\chi,\FF)$ is infinite.
\end{exam}

\section{The homotopic invariants}
\label{sec:homotopic}
In this section we explain how to modify the statement of Theorem~\ref{teo:mainsigma} to obtain the analogous homotopic result.

Basically, we have to change the hypothesis to the homotopic version. As we have said in Definition~\ref{nlink}, 
we define the strong homotopic $n$-link condition as follows:

Consider again the set $\mathcal{B}^\chi\subset \cP$ of those $A_\Delta$  in the clique poset such that  $\chi(Z(A_\Delta))=0$. 
Assume that for any $A_\Delta\in\mathcal{B}^\chi$ with $|\Delta|\leq n$ the link $\lk_{\LL^\chi}(\sigma)$ 
is $(n-1-|\sigma|)$-connected. Then we say that $\chi$ satisfies \emph{the strong homotopic $n$-link condition}. 

\begin{teo}\label{teo:mainsigmahomotopic} 
Let  $G=A_\Gamma$ be an even Artin group of $\FC$-type, and $0\neq\chi:G\to\RR$ a character such that the strong homotopic
$n$-link condition holds for $\chi$. Then $[\chi]\in\Sigma^n(G)$.
\end{teo}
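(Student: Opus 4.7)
The plan is to mirror the proof of Theorem~\ref{teo:mainsigma}, but now invoking the homotopical part $i)$ of Theorem~\ref{teo:sigmacomplex} in place of its homological part $ii)$. We let $G=A_\Gamma$ act on $X=|C(\mathcal{H}^\chi)|$ by cell-permuting homeomorphisms induced by left multiplication on cosets. The stabilizer of a simplex $g_0(S_0\subset\ldots\subset S_k)$ is a conjugate of $S_0\in\mathcal{H}^\chi$, hence of the form $gA_\Delta g^{-1}$ with $\chi(Z(A_\Delta))\neq 0$, and in particular no stabilizer lies inside $\ker\chi$. Since $A_\Gamma$ is of $\FC$-type, each such $A_\Delta$ is a finite type (hence $F_\infty$) Artin group, and Lemma~\ref{lem:center} gives
$$[\chi|_{A_\Delta}]\in\Sigma^\infty(A_\Delta)\subseteq\Sigma^{n-p}(A_\Delta)$$
for any $p$-cell with stabilizer $A_\Delta$. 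The $n$-skeleton of $X$ modulo $G$ is finite because $\Gamma$ has finitely many cliques. Consequently, assuming that $X$ is $(n-1)$-connected, Theorem~\ref{teo:sigmacomplex}$i)$ yields $[\chi]\in\Sigma^n(G)$.

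The task thus reduces to a homotopical version of the Proposition preceding Theorem~\ref{teo:mainsigma}: under the strong homotopic $n$-link condition, $X=|C(\mathcal{H}^\chi)|$ is $(n-1)$-connected. The strategy is to rerun the filtration argument of Lemma~\ref{lem:cosetposets} with $\cP$ the clique poset, height function $h(A_\Delta)=|\Delta|$, and $\cH=\mathcal{H}^\chi$, but tracking higher homotopy groups instead of homology. By Lemma~\ref{lem:complex} the top term $D^{h(\cP)}=C(\cP)$ is contractible, so it suffices to verify inductively that each inclusion $D^s\hookrightarrow D^{s+1}$ is an $n$-equivalence. The decomposition~\eqref{complexS} identifies the attaching data at stratum $s$ as $G$-equivariant pieces $G/S\times(Z^S\star|\mathcal{J}^S|)$, indexed by $S\in\mathcal{B}^\chi$ of height $s$. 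Lemma~\ref{lem:spheres} makes $Z^S$ a wedge of $(s-1)$-spheres, hence $(s-2)$-connected. Exactly the poset map $f\colon\mathcal{L}^S\to\mathcal{J}^S$ constructed in the proof of that Proposition induces, via Quillen's poset map lemma, a homotopy equivalence on geometric realizations, so $|\mathcal{J}^S|$ is homotopy equivalent to the (barycentric subdivision of the) flag complex $\hat\lk_{\LL^\chi}(\Delta)$; the strong \emph{homotopic} $n$-link condition then forces $|\mathcal{J}^S|$ to be $(n-1-s)$-connected. The join connectivity estimate $\mathrm{conn}(A\star B)\geq\mathrm{conn}(A)+\mathrm{conn}(B)+2$ now yields that $Z^S\star|\mathcal{J}^S|$ is $(n-1)$-connected, delivering the required $n$-equivalence at each stratum.

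Feeding the induction downwards from the contractible $C(\cP)$ through the homotopy long exact sequences of the pairs $(D^{s+1},D^s)$ then gives $(n-1)$-connectedness of $X=C(\mathcal{H}^\chi)$, completing the argument. The main obstacle I expect is upgrading the chain-level decomposition~\eqref{complexS} to a genuine CW-pair (or pushout) structure at the space level, together with careful handling of the $\pi_1$ case $n=2$, where join connectivity degenerates and a van Kampen argument along the filtration, combined with the simple-connectivity of $C(\cP)$, is likely needed. Once this space-level refinement of Lemma~\ref{lem:cosetposets} is in hand, the rest of the proof is a faithful transcription of the proof of Theorem~\ref{teo:mainsigma} into the homotopical category.
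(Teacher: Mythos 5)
Your proposal is correct and takes essentially the same approach as the paper's own (very terse, two-sentence) proof, which simply replicates Theorem~\ref{teo:mainsigma} homotopically by proving a homotopical analogue of Lemma~\ref{lem:cosetposets} via relative homotopy groups and the $(n-1)$-connectedness of $Z^S\star|\mathcal{J}^S|$ from the Whitehead join-connectivity estimate. Your concern about a degeneration at $n=2$ is overcautious (the bound $\mathrm{conn}(A\star B)\geq\mathrm{conn}(A)+\mathrm{conn}(B)+2$ still applies), but the space-level reading of the filtration $D^s\subseteq D^{s+1}$ that you flag as the main obstacle is precisely what the paper's appeal to relative homotopy groups is implicitly handling: at each stratum one is gluing cones on the $(n-1)$-connected spaces $Z^S\star|\mathcal{J}^S|$ along their bases, making the inclusion an $n$-equivalence.
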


\begin{proof}
The proof follows that of Theorem~\ref{teo:mainsigma} in its homotopic version. 
The homotopic analogue of Lemma~\ref{lem:cosetposets} can be proved using relative homotopy groups, 
and the $(n-1)$-connectedness of $Z^S\star|\mathcal{J}^S|$ (see~\cite[p.57 (2.5)]{Whitehead-homotopy}).
\end{proof}

\section*{Acknowledgements}
The authors would like to thank the anonymous referee for a number of comments and suggestions that helped 
improve the exposition.

\bibliographystyle{amsplain}
\bibliography{referencias}

\end{document}